\newcommand{\Ker}{\operatorname{Ker}}
\theoremstyle{definition}
\newtheorem{dfn}{Definition}[section]
\theoremstyle{plain}
\newtheorem{thm}[dfn]{Theorem}
\newtheorem{prop}[dfn]{Proposition}
\newtheorem{lem}[dfn]{Lemma}
\newtheorem{cor}[dfn]{Corollary}
\newtheorem{conj}[dfn]{Conjecture}
\theoremstyle{remark}
\newtheorem{rem}[dfn]{Remark}
\newtheorem{constr}[dfn]{Construction}
\begin{document}












\title{Moduli spaces of rational curves on Artin-Mumford double solids}
\author*{\fnm{Fumiya} \sur{Okamura}}\email{fokamura941@g.chuo-u.ac.jp}
\affil{The Institute of Science and Engineering, Chuo University, 
1-13-27 Kasuga, Bunkyo-ku, Tokyo 112-8551, Japan}

\abstract{
    We describe the irreducible components of the moduli spaces of rational curves on Artin-Mumford double solids. 
    This provides the first example of Fano varieties that satisfy Geometric Manin's Conjecture with multiple Manin components in moduli space of rational curves for each degree.}

\keywords{Rational curves, Fano threefolds, moduli spaces, Brauer groups}
\pacs[MSC Classification]{14H10, 14J45, 14M20}

\maketitle
\section{Introduction}\label{section: introduction}
Throughout the paper, we work over an algebraically closed field $k$ of characteristic $0$. 
\textit{Fano varieties}, normal projective varieties with ample $\mathbb{Q}$-Cartier anticanonical divisors, are one of the main objects in algebraic geometry. 
Fano varieties have a rich geometry of \textit{rational curves} on them. 
Indeed, it is known that klt Fano varieties are \textit{rationally connected} (\cite{Mori1979}, \cite{MiyaokaMori1986}, \cite{KoMiMo1992c}, \cite{HaconMcKernan2007RC}, etc.), where Mori's \textit{bend-and-break} technique is used and developed in these papers. 

A classical question in algebraic geometry is to ask whether rationally connected varieties are \textit{rational}, \textit{stably-rational} or \textit{unirational}. 
This problem has been approached by various methods (e.g., \cite{IskovskikhManin1971BirAut}, \cite{ClemensGriffiths1972IJ}, \cite{ArtinMumford1972}, \cite{Voisin2015}). 
In this paper, we focus on the geometry of rational curves on \textit{Artin-Mumford's example} of unirational, but stably-irrational threefolds in \cite{ArtinMumford1972}. 
Such a threefold is realized as a double cover $X\rightarrow \mathbb{P}^{3}$ branched along a \textit{quartic symmetroid}, i.e., a quartic surface defined by the determinant of a symmetric matrix of linear forms. 
We call $X$ an \textit{Artin-Mumford double solid}. 
Artin-Mumford proved that its desingularization $\tilde{X}$ blown-up at ten nodes has $2$-torsion in its \textit{Brauer group} $\mathrm{Br}(\tilde{X})$, and that Brauer groups of smooth projective varieties are stably-birational invariant. 
Moreover, \cite{BlochSrinivas1983algcycle} and \cite{Voisin2006IntegralHodge} proved that for any smooth rationally connected projective threefold $V$, 
\[
    |\mathrm{Br}(V)| = |\Ker(B_{1}(V)_{\mathbb{Z}}\rightarrow N_{1}(V)_{\mathbb{Z}})|
\]
holds, where $B_{1}(V)_{\mathbb{Z}}$ (resp.\ $N_{1}(V)_{\mathbb{Z}}$) denotes the group of algebraically equivalent classes (resp.\ numerically equivalent classes) of $1$-cycles on $V$. 
In particular, this implies that $\tilde{X}$ has two algebraic classes in each numerical class of $1$-cycles. 

The aim of this paper is to prove the following:
\begin{thm}[= Theorem \ref{thm: AM space of lines} + Theorem \ref{thm: AM spaces of higher deg curves}]\label{thm: Main thm}
    Let $X$ be a general Artin-Mumford double solid and let $H$ be the ample generator of $\mathrm{Pic}(X)\cong \mathbb{Z}$. 
    \begin{enumerate}[(1)]
        \item The moduli space $\mathrm{Mor}(\mathbb{P}^{1},X,1)$ parametrizing $H$-lines consists of two irreducible components $M_{1}^{+}, M_{1}^{-}$ of the expected dimension $5$. 

        \item For each integer $d\ge 2$, the moduli space $\mathrm{Mor}(\mathbb{P}^{1},X,d)$ parametrizing rational curves of $H$-degree $d$ consists of four irreducible components $R_{d}^{+}, R_{d}^{-}, N_{d}^{+}, N_{d}^{-}$ of the expected dimension $2d + 3$, where $N_{d}^{+}$, $N_{d}^{-}$ parametrize $d$-sheeted covers of $H$-lines of $M_{1}^{+}$, $M_{1}^{-}$ respectively, and $R_{d}^{+}$, $R_{d}^{-}$ generically parametrize embedded, very free curves.  
    \end{enumerate}
    The same statements hold for the Kontsevich spaces $\overline{M}_{0,0}(X,d)$ parametrizing genus $0$ stable maps of $H$-degree $d\ge 1$, but the expected dimension of each component is less than by $3$. 
\end{thm}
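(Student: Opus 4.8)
The plan is to establish parts (1) and (2) as the content of Theorems~\ref{thm: AM space of lines} and \ref{thm: AM spaces of higher deg curves}, and then to transfer the resulting picture to the Kontsevich spaces by a quotient argument.

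For the line case I would argue as follows. An $H$-line $C\subset X$ has $H\cdot C=1$, so, writing $\pi\colon X\to\mathbb{P}^{3}$ for the double cover, $\pi$ carries $C$ isomorphically onto a line $\ell\subset\mathbb{P}^{3}$, and $C$ is a component of $\pi^{-1}(\ell)$. Such a component exists exactly when the degree-$4$ branch divisor $Q\cap\ell$ is divisible by $2$ in $\operatorname{Pic}(\ell)$, i.e.\ when $\ell$ is a bitangent line of the quartic symmetroid $Q$; the bitangent lines form a surface $\mathcal{B}\subset\mathbb{G}(1,3)$. The crucial point is that the symmetroid structure $Q=\det M$ — equivalently the associated family of quadric surfaces over $\mathbb{P}^{3}$ and its $2$-torsion Brauer class — splits $\mathcal{B}$ canonically as $\mathcal{B}^{+}\sqcup\mathcal{B}^{-}$ into two irreducible surfaces according to the choice of component of $\pi^{-1}(\ell)$; this is the geometric shadow of the two algebraic classes sitting over the numerical class of a line. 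Adding the $\operatorname{PGL}_{2}$ of parametrizations then produces $M_{1}^{\pm}$, each irreducible, and since a general $H$-line is free, with $N_{C/X}\cong\mathcal{O}\oplus\mathcal{O}$, these are smooth of the expected dimension $2+3=5$. Note that no $H$-line is very free, which is why $d=1$ produces only these two components.

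For $d\ge2$ I would isolate the two origins of components. The multiple covers: for the composite $f=f_{0}\circ g$ of a degree-$d$ self-map $g\colon\mathbb{P}^{1}\to\mathbb{P}^{1}$ with an $H$-line $f_{0}\in M_{1}^{\pm}$ one computes $f^{*}T_{X}\cong\mathcal{O}(2d)\oplus\mathcal{O}\oplus\mathcal{O}$, whose $H^{1}$ vanishes; hence the image $N_{d}^{\pm}$ of $\operatorname{Mor}(\mathbb{P}^{1},\mathbb{P}^{1},d)\times M_{1}^{\pm}\to\operatorname{Mor}(\mathbb{P}^{1},X,d)$ is unobstructed of the expected dimension $2d+3$, is irreducible (its source is), and consists generically of curves that are not very free. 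For the honest components I would smooth connected chains of $d$ $H$-lines all of type $+$ (resp.\ $-$), or build an explicit embedded very free curve of small degree of each type and bootstrap, obtaining very free rational curves of degree $d$; the resulting loci $R_{d}^{\pm}$ are irreducible with general member embedded and very free, of the expected dimension $2d+3$. One then shows the $\pm$ label is a locally constant invariant on all of $\operatorname{Mor}(\mathbb{P}^{1},X,d)$ — so $R_{d}^{+}\ne R_{d}^{-}$, $N_{d}^{+}\ne N_{d}^{-}$, and the $N$'s differ from the $R$'s by very-freeness — and that these four exhaust the components, because any degree-$d$ rational curve either factors through an $H$-line, hence lies in some $N_{d}^{\pm}$, or deforms to an embedded very free curve, hence lies in some $R_{d}^{\pm}$.

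It then remains to pass to $\overline{M}_{0,0}(X,d)$. The reparametrization action of $\operatorname{PGL}_{2}$ identifies the open substack $M_{0,0}(X,d)$ of stable maps with smooth irreducible domain with $[\operatorname{Mor}(\mathbb{P}^{1},X,d)/\operatorname{PGL}_{2}]$; as $\operatorname{PGL}_{2}$ is irreducible of dimension $3$, this quotient has the same irreducible components, each of dimension smaller by $3$, giving expected dimension $2d$ and the stated count (for $d=1$ there is no boundary, since a genus-$0$ degree-$1$ stable map with no markings has irreducible domain). For $d\ge2$ every boundary stratum of $\overline{M}_{0,0}(X,d)$ is dominated by a fiber product $\overline{M}_{0,1}(X,d_{1})\times_{X}\overline{M}_{0,1}(X,d_{2})$ with $d_{1}+d_{2}=d$, hence has dimension at most $(2d_{1}+1)+(2d_{2}+1)-3=2d-1<2d$, so no boundary stratum is a component and the components of $\overline{M}_{0,0}(X,d)$ are precisely the closures of those of $M_{0,0}(X,d)$. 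The description (covers of lines, embedded and very free) is invariant under reparametrization and therefore transfers verbatim.

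I expect the main obstacle to be twofold, and both parts are internal to Theorems~\ref{thm: AM space of lines} and \ref{thm: AM spaces of higher deg curves}: turning the sign $\pm$ into a genuinely global, locally constant invariant on $\operatorname{Mor}(\mathbb{P}^{1},X,d)$ — where the $2$-torsion Brauer class of $\tilde{X}$ and a monodromy analysis of the bitangent surface $\mathcal{B}$ are the essential tools — and proving exhaustiveness, which demands a careful classification of the degenerate degree-$d$ rational curves, in particular those passing through the ten nodes of $X$ and those with non-generic normal bundle, to confirm each lies in one of the four loci already built. The remaining ingredients are standard: deformation theory at maps of the expected dimension, and irreducibility of the dominating parameter spaces.
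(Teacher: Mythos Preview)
Your outline for the line case and for passing between $\Mor$ and $\overline{M}_{0,0}$ is reasonable, though note that $\mathrm{Bit}(\mathcal{D}_W)$ is itself irreducible --- its normalization is the Reye congruence, an Enriques surface --- so the splitting into $\pm$ occurs only upstairs on $\overline{M}_{0,0}(X,1)$, not on $\mathcal{B}$; the paper establishes this via an explicit lift $\mathrm{Rey}(W)\to M_1^{+}$ (Theorem~\ref{thm: AM space of lines}).

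The serious gap is in part~(2). You build $R_d^{+}$ by smoothing a chain of $d$ lines all of type $+$, and $R_d^{-}$ from a chain all of type $-$, and then assert that the $\pm$ label is locally constant. But the only locally constant $\pm$ invariant available is the algebraic class of the strict transform on $\tilde X$, which takes values in $\mathbb{Z}/2$ and is \emph{additive} over the components of a chain. Hence a chain of two $+$ lines and a chain of two $-$ lines carry the \emph{same} invariant, and the paper in fact proves (Theorem~\ref{thm: AM space of conics}) that they lie in the \emph{same} component $R_2^{+}$; the genuinely distinct component $R_2^{-}$ is the one containing the \emph{mixed} $(+,-)$ pairs. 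So for $d$ even your two loci coincide and you have missed a component entirely; for any $d$ your exhaustiveness claim ignores mixed chains. Establishing $R^{++}=R^{--}$ is the heart of the argument and is not formal: the paper restricts to a smooth $S\in|H|$, a degree-$2$ del Pezzo surface, and runs a combinatorial analysis of its $126$ conic bundles (Lemma~\ref{lem: conic bundles on dP2}) to exhibit a single pencil of conics on $S$ with one singular fiber of type $(+,+)$ and another of type $(-,-)$. This, together with the irreducibility of the loci $(M_1^{\pm})'\times_X(M_1^{\pm})'$ proved via linear systems on the Enriques surface $\mathrm{Rey}(W)$ (Lemmas~\ref{lem: Reye reducible div} and~\ref{lem: union of two lines}), settles the conic case; higher degree then follows by induction using the strengthened Movable Bend-and-Break (Theorem~\ref{thm: AM strong MBB}), which guarantees that \emph{every} birational component contains a chain beginning with a line of \emph{each} sign. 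None of these ingredients is present in your plan, and the ``locally constant label'' heuristic you propose would, if taken literally, actually contradict the true component structure.
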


This study is also motivated by \textit{Geometric Manin's Conjecture} and we apply the framework developed in \cite{HRS2004lowdegI}, \cite{CoskunStarr2009cubic}, \cite{LT2019Compos}, \cite{BLRT2022Fano3}, etc.. 
Geometric Manin's Conjecture asserts that for a sufficiently positive nef class $\beta \in N_{1}(X)_{\mathbb{Z}}$ on a Fano variety $X$, the number of irreducible components of $\mathrm{Mor}(\mathbb{P}^{1},X,\beta)$ which satisfy certain deformation properties is constant. 
Moreover, it is expected that the constant is equal to the size of the \textit{unramified Brauer group} $\mathrm{Br}_{\mathrm{nr}}(k(X)/k)$, which is isomorphic to the Brauer group of any smooth resolution of $X$. 
We give a precise statement of this conjecture in Subsection \ref{subsection: GMC}, or see \cite[Section 4.3]{LRT2026nonfree_section} for more generalized version of Geometric Manin's Conjecture. 

In \cite{LT2019Compos}, Lehmann-Tanimoto defined \textit{Manin components} as candidates of irreducible components which we should count in the conjecture, using the $a$-\textit{invariant} and the $b$-\textit{invariant}. 
Furthermore, they proposed a \textit{movable bend-and-break} conjecture, which asserts that any free rational curve $f\colon \mathbb{P}^{1}\rightarrow X$ on a Fano variety $X$ of sufficiently high $-K_{X}$-degree can deform into a union of two free rational curves. 
This is a powerful tool for running the induction argument for the proof of Geometric Manin's Conjecture. 

Fortunately, since Artin-Mumford double solids are terminal factorial del Pezzo threefolds of degree $2$, which we see in Section \ref{section: AM}, we can apply the results of \cite{Okamura2025Gt}. 
In particular, we can classify non-Manin components and prove the movable bend-and-break conjecture for our varieties. 
Thus, in this paper, we focus on studying the base case, i.e., the spaces of $H$-lines and $H$-conics. 
In particular, the geometry of lines on Artin-Mumford double solids is related to \textit{Reye congruences}, which are Enriques surfaces containing smooth rational curves (e.g., \cite{Cossec1983Reye}, \cite{DolgachevKondo2024EnriquesII}). 
We establish several properties of the spaces of lines on Artin-Mumford double solids by relating them to the geometry of Reye congruences in Subsection \ref{subsection: line vs Rey cong}. 

Another notable result is the following: 
\begin{thm}[= Theorem \ref{thm: AM strong MBB}]\label{thm: semi-main thm}
    Let $X$ be a general Artin-Mumford double solid. 
    For each integer $d\ge 2$, any component $M\subset \overline{M}_{0,0}(X,d)$ generically parametrizing birational stable maps contains unions of free curves $g\colon C_{1}\cup C_{2}\rightarrow X$ and $h\colon D_{1}\cup D_{2}\rightarrow X$ such that $g|_{C_{1}}\in M_{1}^{+}$ and $h|_{D_{1}}\in M_{1}^{-}$. 
\end{thm}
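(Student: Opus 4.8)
The plan is to degenerate, inside each birational component, a free curve into a line of the prescribed type glued to a free curve of complementary degree, and to certify that the degeneration stays in that component by means of the $2$-torsion $1$-cycle class. First I would record that, by Theorem~\ref{thm: AM spaces of higher deg curves}, the components of $\overline{M}_{0,0}(X,d)$ generically parametrizing birational stable maps are exactly $R_d^{+}$ and $R_d^{-}$ (the $N_d^{\pm}$ generically parametrize multiple covers of lines), so it suffices to produce the unions $g,h$ inside $M=R_d^{+}$ and inside $M=R_d^{-}$. Let $\epsilon\colon B_1(X)_{\mathbb{Z}}\twoheadrightarrow B_1(X)_{\mathbb{Z}}/N_1(X)_{\mathbb{Z}}\cong\mathbb{Z}/2$ be the invariant coming from the Bloch--Srinivas/Voisin identity recalled in the introduction, normalized so that it vanishes on classes of $M_1^{+}$-lines, equals $1$ on classes of $M_1^{-}$-lines, and so that $R_d^{+}$, $R_d^{-}$ carry the classes with $\epsilon=0$, $\epsilon=1$. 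The key soft observation will be: if $f\colon\mathbb{P}^1\cup_{x}\mathbb{P}^1\to X$ is a free stable map whose two components map birationally onto distinct curves $Z_1\ne Z_2$, then $f$ is unobstructed, the node of its source smooths out, and along the resulting one-parameter smoothing the pushforward $1$-cycle limits to the reduced cycle $Z_1+Z_2$; since a generic multiple cover has non-reduced pushforward, the smoothing lies in a birational component, and this component is $R_d^{+}$ if $\epsilon([Z_1])+\epsilon([Z_2])=0$ and $R_d^{-}$ otherwise (using additivity of $\epsilon$). Freeness of such glued maps is the usual Mayer--Vietoris computation: for free $f_1,f_2\colon\mathbb{P}^1\to X$, the group $H^1(\mathbb{P}^1\cup_x\mathbb{P}^1,f^{*}T_X)$ is a quotient of $\Coker\!\bigl(H^0(f_1^{*}T_X)\oplus H^0(f_2^{*}T_X)\to (T_X)_x\bigr)$, which vanishes because $f_i^{*}T_X$ is globally generated.

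With this in place, for $d\ge 3$ the argument is immediate and uses only Theorem~\ref{thm: Main thm}(2) in degree $d-1$, so no induction on the present statement is required. By Theorem~\ref{thm: Main thm}(2) a general member $C\in R_{d-1}^{\epsilon}$ is an embedded very free curve, for either sign $\epsilon$; since $M_1^{\pm}$ and $R_{d-1}^{\pm}$ are covering families, through a general point $x\in X$ (which we may keep off the finitely many singular points of $X$) there pass a line $\ell\in M_1^{\delta}$ for each $\delta$ and such a $C$, and very-freeness of $C$ lets me move it, keeping $x$ fixed, until $\ell$ and $C$ meet transversally only at $x$ and $\ell\not\subset C$. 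Then $\ell\cup_{x}C$ is a free union of free curves lying in $R_d^{+}$ if $\delta=\epsilon$ and in $R_d^{-}$ if $\delta\ne\epsilon$, and it has $\ell\in M_1^{\delta}$ as one of its two components. Taking $(\delta,\epsilon)=(+,+)$ and $(-,-)$ yields the two required unions inside $R_d^{+}$; taking $(\delta,\epsilon)=(+,-)$ and $(-,+)$ yields them inside $R_d^{-}$.

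It remains to settle the base case $d=2$, where the complementary-degree curve is itself a line and cannot be moved into general position, so a direct study of reducible conics is needed. Here I would extract, from the description of $\overline{M}_{0,0}(X,2)$ underlying Theorem~\ref{thm: AM spaces of higher deg curves}, the existence of free reducible conics that are unions of two distinct lines meeting at a single point: a union $\ell^{+}\cup\ell'^{+}$ of two $M_1^{+}$-lines (which, being of class $\epsilon=0$, lies in $R_2^{+}$), hence also --- by applying the double-cover involution $\iota$, which interchanges $M_1^{+}$ and $M_1^{-}$ while preserving $R_2^{+}$ --- a union of two $M_1^{-}$-lines in $R_2^{+}$; and a union $\ell^{+}\cup\ell'^{-}$ of an $M_1^{+}$-line with an $M_1^{-}$-line (of class $\epsilon=1$, hence in $R_2^{-}$), which already carries components of both types. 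Each constituent line is free (this is part of the proof of Theorem~\ref{thm: Main thm}(1)), so these conics are free by the gluing principle and the $\epsilon$-bookkeeping places them in the stated components, giving $g$ and $h$ in both $R_2^{+}$ and $R_2^{-}$. Concretely the configurations arise from the incidence of lines on $X$: a general line $\ell$ meets, each at one point, the finitely many further lines lying over the finitely many bitangents to the quartic symmetroid through a point of $\pi(\ell)$ other than $\pi(\ell)$ itself, and for general $X$ these include lines of both types.

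The gluing-and-smoothing mechanism and the parity bookkeeping are routine; I expect the real obstacle to be the base case, namely controlling the incidence geometry of lines on a general Artin--Mumford double solid --- equivalently the geometry of its Reye congruence of bitangents --- finely enough to guarantee the three conic configurations and their membership in $R_2^{\pm}$. In particular one must verify that the lines incident to a general line are not confined to a single component $M_1^{\pm}$, the sort of statement that can fail on non-general members of the family; this is exactly where the generality hypothesis on $X$ and the comparison with Reye congruences of Subsection~\ref{subsection: line vs Rey cong} are genuinely used.
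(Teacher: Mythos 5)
Your argument is circular at its two load-bearing points. First, you open by invoking Theorem~\ref{thm: AM spaces of higher deg curves} to reduce to producing the unions inside $R_d^{+}$ and $R_d^{-}$, and in the base case you ``extract'' the needed reducible conics from the description of $\overline{M}_{0,0}(X,2)$; but in the paper both of these classification statements are \emph{consequences} of the statement you are proving (Theorem~\ref{thm: AM space of conics} is where the $d=2$ case of strong movable bend-and-break is actually established, and Theorem~\ref{thm: AM spaces of higher deg curves} for $d\ge 3$ is deduced from the strong MBB statement in degree $d$ together with the classification in degree $d-1$). The statement is about an \emph{arbitrary} component $M$ generically parametrizing birational maps, and its whole point is to force any such $M$ to contain the boundary loci $\Delta_{1,d-1}^{\pm\pm}$ and hence to coincide with one of two components. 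Your construction only exhibits unions $\ell\cup C$ lying in whichever components happen to contain those attachments; it says nothing about an arbitrary $M$, because you never degenerate a general member of $M$ at all. The missing ingredient is precisely Theorem~\ref{thm: AM MBB} (movable bend-and-break), which the paper iterates to put a chain of free lines inside the given $M$, and then the $d=2$ type-switching plus \cite{LT2019Compos}*{Lemma 5.9} and smoothing of a subchain to change the type of one line.

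Second, your $\epsilon$-bookkeeping is used in the wrong direction. The algebraic-equivalence invariant can only \emph{separate} components (and even this must be done on $\tilde{X}$ via strict transforms of curves avoiding the nodes: by Corollary~\ref{cor: AM lines through nodes}, algebraic and numerical equivalence coincide on $X$ itself, so the map $\epsilon$ on $B_{1}(X)_{\mathbb{Z}}$ you posit does not exist, and on $\tilde X$ the sign is not even well defined for curves through nodes, which is why $M_{1}^{+}\cap M_{1}^{-}\neq\emptyset$). Equality of the class does not certify that two stable maps lie in the \emph{same} irreducible component; a priori there could be several birational components all carrying $\epsilon=0$ curves. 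Concretely, your claim that the Geiser-type involution $\iota$ preserves $R_{2}^{+}$, i.e.\ that a union of two $M_{1}^{+}$-lines and a union of two $M_{1}^{-}$-lines deform to each other, is exactly the hard assertion $R^{++}=R^{--}$, which the paper proves by combining the irreducibility of the incidence loci $(M_{1}^{\pm})'\times_{X}(M_{1}^{\pm})'$ (Lemma~\ref{lem: union of two lines}, via the Reye congruence) with the conic-bundle analysis on a general degree-$2$ del Pezzo surface in $|H|$ (Lemma~\ref{lem: conic bundles on dP2}). You flag the base case as ``the real obstacle'' but do not supply these arguments, and without them both the $d=2$ case and the component bookkeeping for $d\ge 3$ remain unproved.
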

This is a stronger result than movable bend-and-break, and essential to prove Theorem \ref{thm: Main thm} as there are multiple components in the space $\overline{M}_{0,0}(X,1)$ of lines. 
The proof of Theorem \ref{thm: semi-main thm} involves the conic bundle structure of degree $2$ del Pezzo surfaces, treated in Subsection \ref{subsection: conics on dP2}. 

As a corollary of Theorem \ref{thm: Main thm}, we obtain Geometric Manin's Conjecture: 
\begin{cor}[= Corollary \ref{cor: AM GMC}]\label{cor: Cor of Main thm}
    Geometric Manin's Conjecture holds for general Artin-Mumford double solids.
\end{cor}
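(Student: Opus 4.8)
The plan is to read off the corollary from Theorem~\ref{thm: Main thm}, the classification of non-Manin components for terminal factorial del Pezzo threefolds of degree $2$ in \cite{Okamura2024Gt}, and the computation of $\mathrm{Br}(\tilde X)$ due to Artin-Mumford. First I would recall the precise formulation of Geometric Manin's Conjecture given in Subsection~\ref{subsection: GMC}: for a Fano variety $X$ and a nef class $\beta$ of sufficiently large $-K_X$-degree, the number of \emph{Manin components} of $\mathrm{Mor}(\mathbb{P}^1,X,\beta)$ --- those irreducible components which generically parametrize free curves, have the expected dimension $-K_X\cdot\beta+\dim X$, and are not accumulating, i.e.\ not swept out by a subvariety $Y\subsetneq X$ whose pair $(a(Y,-K_X|_Y),b(Y,-K_X|_Y))$ is lexicographically larger than $(a(X,-K_X),b(X,-K_X))$ --- is conjectured to be constant and equal to $|\mathrm{Br}_{\mathrm{nr}}(k(X)/k)|=|\mathrm{Br}(\tilde X)|$. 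Since $\mathrm{Pic}(X)\cong\mathbb{Z}\cdot H$ with $-K_X=2H$, every effective curve class is some $dH$, and by \cite{ArtinMumford1972} we have $|\mathrm{Br}(\tilde X)|=2$; so it suffices to show that $\mathrm{Mor}(\mathbb{P}^1,X,dH)$ has exactly two Manin components for every $d\ge 1$.

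For $d\ge 2$, Theorem~\ref{thm: Main thm} gives four components $R_d^{\pm},N_d^{\pm}$ of the expected dimension $2d+3$. The components $N_d^{\pm}$ generically parametrize $d$-sheeted covers of $H$-lines, and pulling back $T_X$ along a free line (whose normal bundle on a general Artin-Mumford double solid is $\mathcal{O}\oplus\mathcal{O}(-1)$) and then along a degree-$d$ map $\mathbb{P}^1\to\mathbb{P}^1$ produces a bundle with nonzero $H^1$ for $d\ge 2$; hence the generic member of $N_d^{\pm}$ is non-free and $N_d^{\pm}$ is non-Manin. Equivalently, $N_d^{\pm}$ is exactly the type of non-Manin component classified in \cite{Okamura2024Gt}. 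On the other hand, $R_d^{+}$ and $R_d^{-}$ generically parametrize embedded very free curves by Theorem~\ref{thm: Main thm}, so they dominate $X$, have the expected dimension, and --- since \cite{Okamura2024Gt} shows that no proper subvariety of a terminal factorial del Pezzo threefold of degree $2$ is $(a,b)$-accumulating for these classes --- they are not accumulating; thus $R_d^{+},R_d^{-}$ are Manin components and there are exactly two of them. For $d=1$, Theorem~\ref{thm: Main thm} gives the two components $M_1^{+},M_1^{-}$ of expected dimension $5$, each generically parametrizing free lines with normal bundle $\mathcal{O}\oplus\mathcal{O}(-1)$ and dominating $X$, so by the same (non-)accumulation input both are Manin components, again two in number. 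Hence for every $d\ge 1$ the count equals $2=|\mathrm{Br}(\tilde X)|$, and the identical statement for $\overline{M}_{0,0}(X,d)$ follows from the last sentence of Theorem~\ref{thm: Main thm}.

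Throughout, Theorem~\ref{thm: semi-main thm} supplies the movable bend-and-break input --- each $R_d^{\pm}$ degenerates into a chain one of whose components lies in $M_1^{\pm}$ --- which makes the induction in $d$ behind the conjecture well-posed and guarantees that the two Manin families are coherently labelled across all degrees rather than merging or multiplying in low degree. The main obstacle is precisely this last bookkeeping: one must verify that the $a$- and $b$-invariant analysis of \cite{Okamura2024Gt} separates $N_d^{\pm}$ from $R_d^{\pm}$ exactly, with no further accumulating subvariety of $X$ and no spurious identification of components, which is why the explicit base cases $d=1,2$ established earlier in the paper are indispensable.
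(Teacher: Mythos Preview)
Your overall strategy --- combine Theorem~\ref{thm: Main thm} with the classification in \cite{Okamura2024Gt} and $|\mathrm{Br}(\tilde X)|=2$ --- is correct, but the execution has a genuine gap in how you identify which components are Manin.

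First, your description of ``accumulating'' is not the one in Definition~\ref{dfn: Manin comp}: a component is accumulating if it is dominated by a component of $\mathrm{Mor}(\mathbb P^1,Y)$ via some breaking morphism $f\colon Y\to X$, and breaking morphisms include not only subvarieties with higher $a$-invariant but also $a$-covers with positive Iitaka dimension. You only invoke the subvariety case, and this is exactly why your argument for $N_d^{\pm}$ and $M_1^{\pm}$ goes wrong.

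Second, the normal bundle of a general $H$-line on $X$ is $\mathcal O\oplus\mathcal O$, not $\mathcal O\oplus\mathcal O(-1)$: indeed $-K_X\cdot\ell=2$, so $\deg N_{\ell/X}=0$, and by Lemma~\ref{lem: AM dominant comp} general lines are free. Consequently a $d$-sheeted cover of a general line has $f^*T_X\cong\mathcal O(2d)\oplus\mathcal O\oplus\mathcal O$, which has vanishing $H^1$; the generic member of $N_d^{\pm}$ \emph{is} free, and your freeness-based argument that $N_d^{\pm}$ is non-Manin collapses.

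What actually happens, as in the paper's proof, is that by \cite[Theorems~1.1 and~1.2]{Okamura2024Gt} every breaking morphism for $X$ is an $a$-cover of Iitaka dimension~$2$ factoring rationally through a family of $H$-lines. This makes $M_1^{+},M_1^{-}$ and all $N_d^{+},N_d^{-}$ accumulating (not Manin, contrary to your $d=1$ claim), while $R_d^{+},R_d^{-}$ for $d\ge 2$ are the only Manin components. The conjecture then holds with $\alpha=2[\ell]$, giving exactly $2=|\mathrm{Br}_{\mathrm{nr}}(k(X)/k)|$ Manin components in each degree $d\ge 2$.
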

Geometric Manin's Conjecture has been proved for homogeneous varieties (\cite{Thomsen1998}, \cite{Kim2001}), smooth toric varieties (\cite{Bourqui2012}, \cite{Bourqui2016}), low degree general hypersurfaces (\cite{HRS2004lowdegI}, \cite{CoskunStarr2009cubic}, \cite{Beheshti2013}, \cite{Browning2017}, \cite{RiedlYang2019}, etc.), smooth Fano varieties of dimension at most $3$ (\cite{Testa2005}, \cite{Testa2009}, \cite{BLRT2022Fano3}, \cite{BurkeJovinelly2022Fano3}, etc.) or coindex at most $3$ (\cite{CoskunStarr2009cubic}, \cite{Castravet2004}, \cite{LT2019Compos}, \cite{ShimizuTanimoto2022dP1}, \cite{LT2021primeFano}, \cite{Okamura2024dP}, \cite{JovinellyOkamura2024coindex3}, etc.), terminal factorial del Pezzo threefolds of degree at least $3$ (\cite{Okamura2025Gt}), and so on.  
In these results, each Fano variety has a unique Manin component in the moduli space $\mathrm{Mor}(\mathbb{P}^{1},X,\beta)$ for each sufficiently positive $\beta\in N_{1}(X)_{\mathbb{Z}}$. 
Thus, to the best of the author's knowledge, Theorem \ref{thm: Main thm} (and Corollary \ref{cor: Cor of Main thm}) provides the first example of Fano varieties with multiple Manin components in the moduli space $\mathrm{Mor}(\mathbb{P}^{1},X,d)$ for each degree. 

\subsection*{Outline}\label{subsection: outline}
The paper is organized as follows. 
In Section \ref{section: preliminaries}, we collect preliminary results on the moduli spaces of rational curves. 
We also introduce Geometric Manin's Conjecture in Subsection \ref{subsection: GMC}. 
In Subsection \ref{subsection: conics on dP2}, we study conic bundles on del Pezzo surfaces of degree $2$, which is used to prove Theorem \ref{thm: semi-main thm}. 
In Section \ref{section: AM}, we construct Artin-Mumford double solids from linear systems of quadric surfaces in $\mathbb{P}^{3}$ (e.g., \cite{Beauville2016Luroth}). 
Then we see several basic properties of Artin-Mumford double solids (Subsection \ref{subsection: AM constr}) and the difference between algebraic and numerical equivalence classes of $1$-cycles (Subsection \ref{subsection: alg equiv vs num equiv}). 
In Section \ref{section: rational curves on AM}, we study the moduli spaces of rational curves on Artin-Mumford double solids. 
We concentrate on the geometry of lines and Reye congruences in Subsection \ref{subsection: line vs Rey cong}. 
Lastly, in Subsection \ref{subsection: higher deg rational curves on AM}, we prove the main theorems combining the results of \cite{Okamura2025Gt}. 

\section*{Acknowledgments}
The author would like to thank Sho Tanimoto for helpful discussions and continuous support. 
The author also would like to thank Shigeyuki Kondo for answering the questions on Enriques surfaces. 
The author is grateful to Brendan Hassett, Eric Jovinelly, Brian Lehmann, and Daniel Loughran for their valuable comments on an earlier draft of the paper. 
The author would like to thank the anonymous referee for the invaluable comments.

The author was partially supported by JST FOREST program Grant number JPMJFR212Z.

\section{Preliminaries}\label{section: preliminaries}
We begin by fixing our notation on numerical cones and moduli spaces of rational curves. 
Let $X$ be a projective variety. 
Let $N_{1}(X)_{\mathbb{Z}}$ and $N^{1}(X)_{\mathbb{Z}}$ be the set of numerical classes of $1$-cycles and Cartier divisors on $X$ respectively. 
Set $N_{1}(X) = N_{1}(X)_{\mathbb{Z}} \otimes \mathbb{R}$ and $N^{1}(X) = N^{1}(X)_{\mathbb{Z}} \otimes \mathbb{R}$. 
The dimension of the $\mathbb{R}$-vector spaces $N_{1}(X)$ and $N^{1}(X)$ is called the \textit{Picard number}, denoted by $\rho(X)$. 
We write the \textit{pseudo-effective cones} of $1$-cycles and divisors as $\overline{\mathrm{Eff}}_{1}(X) \subset N_{1}(X)$ and $\overline{\mathrm{Eff}}^{1}(X) \subset N^{1}(X)$. 
Also, let $\mathrm{Nef}_{1}(X) \subset N_{1}(X)$ and $\mathrm{Nef}^{1}(X) \subset N^{1}(X)$ stand for the \textit{nef cones} of $1$-cycles and divisors, which are the dual cones of $\overline{\mathrm{Eff}}^{1}(X)$ and $\overline{\mathrm{Eff}}_{1}(X)$ respectively with respect to the intersection pairing $N^{1}(X)\times N_{1}(X)\rightarrow \mathbb{R}$. 

A \textit{rational curve} on $X$ is a non-constant morphism $f\colon \mathbb{P}^{1}\rightarrow X$. 
For a numerical class $\alpha \in N_{1}(X)_{\mathbb{Z}}\setminus\{0\}$, let $\mathrm{Mor}(\mathbb{P}^{1},X,\alpha)$ be the moduli space parametrizing rational curves $f\colon \mathbb{P}^{1}\rightarrow X$ such that $f_{*}\mathbb{P}^{1} \equiv \alpha$, numerically equivalent. 
Let $\mathrm{Mor}(\mathbb{P}^{1},X)$ be the disjoint union of $\mathrm{Mor}(\mathbb{P}^{1},X,\alpha)$ for all $\alpha \in N_{1}(X)_{\mathbb{Z}}\setminus\{0\}$. 
We also frequently use the \textit{Kontsevich space} $\overline{M}_{0,r}(X,\alpha)$ parametrizing $r$-pointed stable maps $(f\colon C\rightarrow X, p_{1},\dots,p_{r})$ of genus $0$ with numerical class $f_{*}C \equiv \alpha$. 
We set $\overline{M}_{0,r}(X)$ to be the disjoint union of $\overline{M}_{0,r}(X,\alpha)$ for all $\alpha \in N_{1}(X)_{\mathbb{Z}}\setminus\{0\}$. 

We introduce the notion of \textit{free} and \textit{very free} rational curves. 
\begin{dfn}
    Let $X$ be a projective variety of dimension $n$. 
    A rational curve $f\colon \mathbb{P}^{1}\rightarrow X$ is \textit{free} (resp.\ \textit{very free}) if the following conditions hold: 
    \begin{itemize}
    \item the image $f(\mathbb{P}^{1})$ is contained in the smooth locus of $X$, and
    
    \item $H^{1}(X, f^{*}T_{X}(-1)) = 0$ (resp.\ $H^{1}(X, f^{*}T_{X}(-2)) = 0$).
    \end{itemize}
    The second condition is equivalent to the existence of non-negative (resp.\ positive) integers $a_{1}, \dots, a_{n}$ such that 
    \[f^{*}T_{X} \cong \mathcal{O}(a_{1}) \oplus \dots \oplus \mathcal{O}(a_{n}). \] 
\end{dfn}

Recall that Gorenstein terminal threefolds have at worst isolated \textit{compound Du Val singularities}, i.e., their general hyperplane sections are Du Val singularities \cite[Corollary 5.38]{KollarMori1998}. 
In particular, threefolds with only finitely many nodes (ordinary double points) are Gorenstein terminal. 
We collect several properties of (free) rational curves on Gorenstein terminal Fano threefolds as Artin-Mumford double solids are Gorenstein terminal Fano, furthermore, factorial by Lemma \ref{lem: AM factorial}. 

\begin{lem}[\cite{Kollar1996}, Theorem 1.3; \cite{LT2024dPfibI}, Lemma 2.2]\label{lem: Gt exp dim}
    Let $X$ be a Gorenstein terminal threefold. 
    Let $f\colon \mathbb{P}^{1}\rightarrow X$ be a rational curve on $X$. 
    Then we have 
    \[
        \dim_{[f]}\mathrm{Mor}(\mathbb{P}^{1},X)\ge -K_{X}\cdot f_{*}\mathbb{P}^{1} + \dim X. 
    \]
    Moreover, if $H^{1}(\mathbb{P}^{1}, f^{*}T_{X}) = 0$ (e.g., $f$ is free), then it is the equality and $[f] \in \mathrm{Mor}(\mathbb{P}^{1},X)$ is a smooth point. 
\end{lem}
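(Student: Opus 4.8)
The plan is to run the standard deformation-theoretic dimension estimate (Mori's argument, as in \cite{Kollar1996}) for the scheme $\Mor(\mathbb{P}^{1},X)$, being careful at the finitely many singular points of $X$. First recall that, since $X$ is Gorenstein terminal of dimension $3$, its singular locus $\Sigma := \mathrm{Sing}(X)$ is a \emph{finite} set of compound Du Val points; in particular these are hypersurface (hence lci) singularities and the dualizing sheaf $\omega_{X}$ is invertible and represents $K_{X}$.

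Assume first that $f(\mathbb{P}^{1})\subset X\setminus\Sigma$. Then $f^{*}T_{X}$ is a rank $3$ vector bundle on $\mathbb{P}^{1}$, the Zariski tangent space of $\Mor(\mathbb{P}^{1},X)$ at $[f]$ is $H^{0}(\mathbb{P}^{1},f^{*}T_{X})$, and the obstructions to deforming $f$ lie in $H^{1}(\mathbb{P}^{1},f^{*}T_{X})$. Hence near $[f]$ the scheme $\Mor(\mathbb{P}^{1},X)$ is cut out inside a smooth scheme by at most $h^{1}(\mathbb{P}^{1},f^{*}T_{X})$ equations, so every component through $[f]$ has dimension at least
\[
  h^{0}(\mathbb{P}^{1},f^{*}T_{X}) - h^{1}(\mathbb{P}^{1},f^{*}T_{X}) = \chi(\mathbb{P}^{1},f^{*}T_{X}) = \deg(f^{*}T_{X}) + \rank(f^{*}T_{X}),
\]
the last equality being Riemann-Roch on $\mathbb{P}^{1}$. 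Since $\deg(f^{*}T_{X}) = -K_{X}\cdot f_{*}\mathbb{P}^{1}$ and $\rank(f^{*}T_{X}) = \dim X$, this is the desired bound. If moreover $H^{1}(\mathbb{P}^{1},f^{*}T_{X})=0$ the obstruction space vanishes, so the deformation problem is unobstructed and $[f]$ is a smooth point of dimension exactly $h^{0}(\mathbb{P}^{1},f^{*}T_{X}) = -K_{X}\cdot f_{*}\mathbb{P}^{1} + \dim X$. For a free curve the image automatically lies in $X\setminus\Sigma$ and $H^{1}(\mathbb{P}^{1},f^{*}T_{X}(-1))=0$ forces $H^{1}(\mathbb{P}^{1},f^{*}T_{X})=0$, so this case already covers the examples mentioned in the statement.

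Now suppose $f(\mathbb{P}^{1})$ meets $\Sigma$; since $\Sigma$ is finite and $f(\mathbb{P}^{1})$ is a curve, $f(\mathbb{P}^{1})\not\subset\Sigma$. Here the naive argument must be replaced by lci deformation theory: the cotangent complex $L_{X}$ is perfect of amplitude $[-1,0]$ with $\det L_{X}\cong\omega_{X}$, so $E := R\Hom(Lf^{*}L_{X},\mathcal{O}_{\mathbb{P}^{1}})$ is a perfect complex on $\mathbb{P}^{1}$ whose $0$-th (resp.\ $1$-st) hypercohomology is the tangent (resp.\ obstruction) space of $\Mor(\mathbb{P}^{1},X)$ at $[f]$. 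The same ``cut out by obstruction-many equations'' estimate gives $\dim_{[f]}\Mor(\mathbb{P}^{1},X)\ge\chi(\mathbb{P}^{1},E)$, and Grothendieck-Riemann-Roch computes $\chi(\mathbb{P}^{1},E) = \deg f^{*}\omega_{X}^{-1} + \rank(L_{X}) = -K_{X}\cdot f_{*}\mathbb{P}^{1} + \dim X$; vanishing of the obstruction space again upgrades this to an equality together with smoothness of $[f]$. Equivalently, one may simply invoke \cite{LT2024dPfibI}, Lemma 2.2, which records exactly this extension of \cite{Kollar1996}, Theorem 1.3, to Gorenstein terminal threefolds.

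The Riemann-Roch bookkeeping and the identification $\det L_{X}\cong\omega_{X}$ are routine; the one point that needs real care --- and the step I expect to be the main obstacle --- is making the tangent-obstruction theory of $\Mor(\mathbb{P}^{1},X)$ precise along curves through the compound Du Val points, i.e.\ working with the complex $R\Hom(Lf^{*}L_{X},\mathcal{O}_{\mathbb{P}^{1}})$ in place of $f^{*}T_{X}$ and checking that both the dimension estimate and the ``unobstructed $\Rightarrow$ smooth'' implication persist in that setting.
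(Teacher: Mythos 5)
Your sketch is, in substance, the route behind the paper's citation rather than a different one: the paper gives no proof of this lemma, quoting it from \cite{Kollar1996} and \cite{LT2024dPfibI}, and the cited result of \cite{Kollar1996} already covers targets that are lci along the image, which is exactly how you treat the compound Du Val points (they are hypersurface, hence lci, singularities). The inequality part of your argument is correct in both cases; for the singular-meeting case one should just record why $\operatorname{ext}^{0}-\operatorname{ext}^{1}$ really equals $\chi(E)$ for $E=R\Hom(Lf^{*}L_{X},\mathcal{O}_{\mathbb{P}^{1}})$: the local sheaf $\mathcal{E}xt^{1}(Lf^{*}L_{X},\mathcal{O}_{\mathbb{P}^{1}})$ is a skyscraper supported on the finitely many points lying over $\mathrm{Sing}(X)$ and $\mathcal{E}xt^{2}=0$ by the amplitude of $L_{X}$, so $\operatorname{Ext}^{2}(Lf^{*}L_{X},\mathcal{O}_{\mathbb{P}^{1}})=0$ and the Euler characteristic computes the tangent-minus-obstruction dimension.

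The one place where your write-up does not quite prove the lemma as literally phrased is the ``moreover'' clause when $f(\mathbb{P}^{1})$ meets $\mathrm{Sing}(X)$: there you conclude smoothness from the vanishing of the obstruction space $\operatorname{Ext}^{1}(Lf^{*}L_{X},\mathcal{O}_{\mathbb{P}^{1}})$, whereas the stated hypothesis is $H^{1}(\mathbb{P}^{1},f^{*}T_{X})=0$. These are not the same in that case: the obstruction space contains the skyscraper contribution $H^{0}(\mathcal{E}xt^{1}(Lf^{*}L_{X},\mathcal{O}_{\mathbb{P}^{1}}))$ at the points mapping to the singularities, which the vanishing of $H^{1}(\mathbb{P}^{1},f^{*}T_{X})$ does not control. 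So either restrict the equality/smoothness statement to curves avoiding $\mathrm{Sing}(X)$ --- which is the only case the paper actually uses, since by its definition a free curve has image in the smooth locus, and there your first paragraph is complete --- or defer that clause to \cite{LT2024dPfibI}, as you suggest at the end.
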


\begin{lem}[cf.\ \cite{LT2024dPfibI}, Lemma 2.3]\label{lem: free locus}
    Let $X$ be a Gorenstein terminal threefold. 
    Let $M \subset \mathrm{Mor}(\mathbb{P}^{1}, X)$ be a dominant component, i.e., the evaluation map $s\colon \mathcal{C}\rightarrow X$ from the universal family $\pi\colon \mathcal{C}\rightarrow M$ is dominant. 
    Then there is an open dense subset $U\subset X$ such that any rational curve parametrized by $\pi(s^{-1}(U))$ is free. 
\end{lem}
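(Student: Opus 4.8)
The plan is to combine generic smoothness of the evaluation map, available in characteristic $0$, with the elementary observation that surjectivity of the differential of the evaluation map at one point of $\mathbb{P}^{1}$ forces the splitting type of $f^{*}T_{X}$ to be nonnegative.

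First I would pass to a well-behaved dense open subset of $M$. Since $X$ is Gorenstein terminal, $\operatorname{Sing}(X)$ consists of finitely many (compound Du Val) points, so $X_{\mathrm{sm}} := X\setminus\operatorname{Sing}(X)$ is a big open subset and the locus $M^{\circ}\subset M$ of $[f]$ with $f(\mathbb{P}^{1})\subset X_{\mathrm{sm}}$ is open. I claim $M^{\circ}$ is dense, i.e.\ the general member of a dominant component avoids the finitely many singular points; granting this, shrink $M^{\circ}$ into the smooth locus of the variety $M$, so that $\mathcal{C}^{\circ} := M^{\circ}\times\mathbb{P}^{1}$ is smooth, and observe that over $M^{\circ}$ the sheaf $f^{*}T_{X}$ is a vector bundle, whence $T_{[f]}\mathrm{Mor}(\mathbb{P}^{1},X)=H^{0}(\mathbb{P}^{1},f^{*}T_{X})$. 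The restriction $s^{\circ}\colon\mathcal{C}^{\circ}\to X$ of the evaluation is still dominant, so generic smoothness provides a dense open $U\subset X$ over which $s^{\circ}$ is a smooth morphism; as $\mathcal{C}^{\circ}$ is smooth, smoothness of $s^{\circ}$ over $U$ forces $U\subset X_{\mathrm{sm}}$ (flat base change of regularity), and after shrinking $U$ I may assume it meets no curve parametrized by the proper closed subset $M\setminus M^{\circ}$, so that $\pi(s^{-1}(U))\subset M^{\circ}$.

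Now take $[f]\in\pi(s^{-1}(U))$ and $t_{0}\in\mathbb{P}^{1}$ with $f(t_{0})\in U$. Smoothness of $s^{\circ}$ at $([f],t_{0})$ means the differential $T_{[f]}M^{\circ}\oplus T_{t_{0}}\mathbb{P}^{1}\to(f^{*}T_{X})_{t_{0}}$ — evaluation at $t_{0}$ on the first summand, $df_{t_{0}}$ on the second — is surjective; since $T_{[f]}M^{\circ}\subset H^{0}(\mathbb{P}^{1},f^{*}T_{X})$, already $\operatorname{ev}_{t_{0}}\big(H^{0}(f^{*}T_{X})\big)+\operatorname{Im}(df_{t_{0}})=(f^{*}T_{X})_{t_{0}}$. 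Write $f^{*}T_{X}\cong\mathcal{O}(a_{1})\oplus\mathcal{O}(a_{2})\oplus\mathcal{O}(a_{3})$ with $a_{1}\ge a_{2}\ge a_{3}$; since $f$ is nonconstant, $T_{\mathbb{P}^{1}}=\mathcal{O}(2)$ embeds into $f^{*}T_{X}$, forcing $a_{1}\ge2$. If $a_{3}\le-1$, then $H^{0}(\mathcal{O}(a_{3}))=0$ and $\operatorname{Hom}(\mathcal{O}(2),\mathcal{O}(a_{3}))=0$, so $\operatorname{ev}_{t_{0}}\big(H^{0}(f^{*}T_{X})\big)+\operatorname{Im}(df_{t_{0}})$ is contained in the span of the fibres at $t_{0}$ of the summands of nonnegative degree, a proper subspace of $(f^{*}T_{X})_{t_{0}}$ — contradicting surjectivity. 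Hence every $a_{i}\ge0$, i.e.\ $H^{1}(\mathbb{P}^{1},f^{*}T_{X}(-1))=0$, so $f$ is free, as claimed.

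The step I expect to be the main obstacle is the density claim: that curves in a dominant \emph{component} do not get trapped at the (isolated, terminal) singularities of $X$. If every member of $M$ passed through a fixed $p\in\operatorname{Sing}(X)$, one would have to deform a general such curve off $p$ to contradict the maximality of $M$ among families of rational curves, and this is the only place the Gorenstein terminal hypothesis enters; when $X$ is smooth this step is vacuous and the argument reduces to the standard one (cf.\ \cite{LT2024dPfibI}, Lemma~2.3).
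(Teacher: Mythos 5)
Your second half is correct: over the smooth locus, generic smoothness of the evaluation map together with the splitting argument for $f^{*}T_{X}$ does yield freeness, and this is essentially the standard proof of \cite[Theorem 3.11]{Kollar1996}, which the paper instead invokes directly on a smooth resolution. The genuine gap is exactly the step you flag and leave unproved: the density of $M^{\circ}$, i.e.\ that a general member of a dominant component avoids the finitely many singular points. This is the real content of the lemma, and your heuristic (``deform a general curve off $p$ to contradict maximality of $M$'') cannot work as stated, because isolatedness of the singularities plus maximality is not enough. For instance, on the cone over the Veronese surface the vertex is an isolated \emph{terminal but non-Gorenstein} point, every line on the cone passes through the vertex, and the lines sweep out the cone; so the space of lines is a dominant component of $\mathrm{Mor}(\mathbb{P}^{1},X)$ all of whose members meet $\mathrm{Sing}(X)$. (Anticanonical cones over del Pezzo surfaces give the analogous Gorenstein, non-terminal examples.) Hence any proof must use the Gorenstein terminal hypothesis quantitatively; the paper imports precisely this statement from \cite[Lemma 2.3]{LT2024dPfibI} (``a general member of a dominant component is free''), which rests on the dimension bound of Lemma \ref{lem: Gt exp dim} compared against a resolution, and then only needs Koll\'ar's theorem upstairs. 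Without supplying that input, your argument does not prove the lemma.

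There is a second, related gap: even granting that $M^{\circ}$ is dense, you shrink $U$ ``so that it meets no curve parametrized by $M\setminus M^{\circ}$''. Since $\pi(s^{-1}(U))$ consists of \emph{all} members of $M$ whose image meets $U$, and no curve through a singular point is free, you need the union of the curves parametrized by $M\setminus M^{\circ}$ to be non-dense in $X$. That does not follow from $M\setminus M^{\circ}$ being a proper closed subset of $M$: its dimension is typically far larger than $3$, so such a subfamily can still dominate a threefold. The paper handles this point by removing the proper closed subset $V\subset X$ dominated by the members of $M$ meeting $\mathrm{Sing}(X)$, and the properness of $V$ again comes from the freeness and dimension estimates surrounding \cite[Lemma 2.3]{LT2024dPfibI} and Lemma \ref{lem: Gt exp dim}, not from generic smoothness. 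Both places where the singular points must be controlled are therefore missing from your proposal.
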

\begin{proof}
    By \cite[Lemma 2.3]{LT2024dPfibI}, a general member of $M$ is free. 
    Let $\phi\colon \tilde{X}\rightarrow X$ be a smooth resolution and let $\tilde{M} \subset \mathrm{Mor}(\mathbb{P}^{1}, \tilde{X})$ be the component generically parametrizing strict transforms of general curves in $M$. 
    Then by \cite[Theorem 3.11]{Kollar1996}, there is an open dense subset $\tilde{U}\subset \tilde{X}$ such that any rational curve of $\tilde{M}$ which meets $\tilde{U}$ is free. 
    Let $V \subset X$ be the proper closed subset dominated by members of $M$ intersecting with the singular locus of $X$. 
    Since $\phi$ is isomorphic over $X\setminus V$, one can take $U$ to be $\phi(\tilde{U})\setminus V$. 
\end{proof}

\subsection{Geometric Manin's Conjecture}\label{subsection: GMC}
In this subsection, we introduce Geometric Manin's Conjecture. 
The paper \cite[Section 4.3]{LRT2026nonfree_section} extends Geometric Manin's Conjecture to more general settings, concerning the spaces of sections of Fano fibrations over smooth curves. 
However, in this paper, we do not treat this version of the conjecture. 

We first define the $a$-\textit{invariant} and the $b$-\textit{invariant}.
\begin{dfn}\label{dfn: a, b-inv}
    Let $X$ be a smooth projective variety with a nef and big divisor $L$. 
    Then the $a$-\textit{invariant} is defined by 
    \[
        a(X, L) = \inf\{t\in \mathbb{R} \mid K_{X} + tL \in \overline{\mathrm{Eff}}^{1}(X)\}. 
    \]
    The $b$-\textit{invariant} is defined by 
    \[
        b(X, L) = \dim F(X, L),
    \]
    where $F(X,L) := \{\alpha \in \mathrm{Nef}_{1}(X)\mid (K_{X} + a(X, L)L)\cdot \alpha = 0\}$. 
    When $X$ is singular, we define 
    \[a(X, L) := a(\tilde{X}, \phi^{*}L),\quad b(X,L) := b(\tilde{X}, \phi^{*}L), \]
    where $\phi\colon \tilde{X}\rightarrow X$ is a smooth resolution. 
    It is independent of the choice of smooth resolutions of $X$ by \cite[Proposition 2.7 and Proposition 2.10]{HTT2015Balanced}. 
\end{dfn}

\begin{rem}\label{rem: a, b-inv weak Fano}
    For a terminal $\mathbb{Q}$-factorial weak Fano variety $X$, we have $a(X, -K_{X}) = 1$ and $b(X, -K_{X}) = \rho(X)$. 
\end{rem}

The $a$-invariant is related to deformation properties on the spaces of rational curves: 
\begin{prop}[\cite{Okamura2025Gt}, Proposition 3.1; cf.\ \cite{LT2019Compos}, Proposition 4.2]\label{prop: non-dom vs higher a-inv}
    Let $X$ be a Gorenstein terminal Fano threefold. 
    Let $M\subset \mathrm{Mor}(\mathbb{P}^{1},X)$ be an irreducible component such that the evaluation map $s\colon \mathcal{C}\rightarrow X$ from the universal family $\pi\colon \mathcal{C}\rightarrow M$ is non-dominant. 
    Then the closure $Z$ of the image of $s$ satisfies 
    \[
        a(Z, -K_{X}|_{Z}) > a(X, -K_{X}) = 1. 
    \]
\end{prop}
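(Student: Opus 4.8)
The plan is to compare two estimates for the dimension of $M$. From below, Lemma~\ref{lem: Gt exp dim} gives $\dim M \ge -K_{X}\cdot C + 3$, where $C := f_{*}\mathbb{P}^{1}$ for a member $[f]$ of $M$. From above, the members of $M$ realize a dominant family of rational curves on a resolution of $Z$, which is a variety of dimension at most $2$ because $X$ is a threefold and $s$ is non-dominant; on a smooth surface or curve such a family has dimension equal to the anticanonical degree of the curve plus $\dim Z$. The resulting gap of at least $1$ between the two bounds will say that the anticanonical class of the resolution is strictly more positive, along the class of the moving curve, than the pullback of $-K_{X}|_{Z}$, and this is precisely $a(Z,-K_{X}|_{Z})>1$.

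In detail, I would argue as follows. Since $M$ is irreducible it lies in a single $\mathrm{Mor}(\mathbb{P}^{1},X,\alpha)$, so $d := -K_{X}\cdot C$ is independent of $[f]\in M$; non-dominance of $s$ forces $1\le\dim Z\le 2$, and the members of $M$ sweep out $Z$. Fix a resolution $\phi_{Z}\colon\tilde Z\to Z$ and put $L := \phi_{Z}^{*}(-K_{X}|_{Z})$, a nef and big divisor on $\tilde Z$ because $-K_{X}$ is ample. Any $f\colon\mathbb{P}^{1}\to Z\subset X$ in $M$ lifts to a morphism $\tilde f\colon\mathbb{P}^{1}\to\tilde Z$ with $\phi_{Z}\circ\tilde f=f$, since a rational map from the smooth curve $\mathbb{P}^{1}$ extends; writing $\tilde C := \tilde f_{*}\mathbb{P}^{1}$ for the strict transform of a general member, we get $(\phi_{Z})_{*}\tilde C=C$, hence $L\cdot\tilde C=(-K_{X}|_{Z})\cdot C=d$ by the projection formula. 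The strict transforms of members of $M$ sweep out $\tilde Z$, so by \cite[Theorem~3.11]{Kollar1996} a general one is free; then $H^{1}(\mathbb{P}^{1},\tilde f^{*}T_{\tilde Z})=0$, so $\tilde f$ is a smooth point of $\mathrm{Mor}(\mathbb{P}^{1},\tilde Z)$ lying on a unique component $\tilde M$ with $\dim\tilde M=-K_{\tilde Z}\cdot\tilde C+\dim Z$. Composing strict transforms with $\phi_{Z}$ and $Z\hookrightarrow X$ defines a generically injective rational map $M\dashrightarrow\tilde M$, so $\dim M\le\dim\tilde M$.

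Now I would put the estimates together: $d+3\le\dim M\le\dim\tilde M=-K_{\tilde Z}\cdot\tilde C+\dim Z\le -K_{\tilde Z}\cdot\tilde C+2$, so $-K_{\tilde Z}\cdot\tilde C\ge d+1$ and therefore $(K_{\tilde Z}+L)\cdot\tilde C=L\cdot\tilde C+K_{\tilde Z}\cdot\tilde C\le d-(d+1)=-1<0$. The class $\tilde C$ moves in the family $\tilde M$, which dominates $\tilde Z$, so it is a movable curve class and pairs non-negatively with every pseudo-effective divisor class on $\tilde Z$ (elementary since $\dim\tilde Z\le 2$); hence $K_{\tilde Z}+L\notin\overline{\mathrm{Eff}}^{1}(\tilde Z)$. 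As $L$ is nef and big, $\{t\in\mathbb{R}\mid K_{\tilde Z}+tL\in\overline{\mathrm{Eff}}^{1}(\tilde Z)\}$ is a closed half-line $[a(\tilde Z,L),\infty)$, and since it does not contain $1$ we conclude $a(Z,-K_{X}|_{Z})=a(\tilde Z,L)>1$; finally $a(X,-K_{X})=1$ by Remark~\ref{rem: a, b-inv weak Fano}.

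The step I expect to be the main obstacle is the dimension bookkeeping around the resolution in the second paragraph: one must make sure that passing from $M$ to the family $\tilde M$ on $\tilde Z$ neither decreases the dimension nor destroys freeness of a general member, so that the clean equality $\dim\tilde M=-K_{\tilde Z}\cdot\tilde C+\dim Z$ is genuinely available to be played against Lemma~\ref{lem: Gt exp dim}. Apart from that, the only essential geometric input beyond Lemma~\ref{lem: Gt exp dim} is the bound $\dim Z\le 2$, which is where the non-dominance hypothesis enters and is the source of the strict inequality $a(Z,-K_{X}|_{Z})>1$.
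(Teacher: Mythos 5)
Your argument is correct and is essentially the standard dimension-count proof behind the result the paper quotes without proof from \cite{Okamura2024Gt} (Proposition 3.1) and \cite{LT2019Compos} (Proposition 4.2): compare the lower bound of Lemma \ref{lem: Gt exp dim} with the exact dimension $-K_{\tilde Z}\cdot\tilde C+\dim Z$ of the dominating family of strict transforms on a resolution of $Z$, and conclude that $K_{\tilde Z}+\phi_Z^{*}(-K_X|_Z)$ pairs negatively with the movable class $\tilde C$, hence is not pseudo-effective. The only points to tighten are routine: the family of lifts to $\tilde Z$ is algebraic over a dense open of $M$ (construct the lift over the generic point of $M$ and spread out, which also gives the generically injective map to a single component $\tilde M$), and the equality $a(X,-K_X)=1$ should be justified directly from terminality plus uniruledness rather than via Remark \ref{rem: a, b-inv weak Fano}, since $X$ is not assumed $\mathbb{Q}$-factorial there.
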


\begin{prop}[\cite{Okamura2025Gt}, Proposition 4.1; cf.\ \cite{LT2019Compos}, Proposition 5.15]\label{prop: ev with reducible fib vs a-cov}
    Let $X$ be a Gorenstein terminal Fano threefold. 
    Let $M\subset \mathrm{Mor}(\mathbb{P}^{1},X)$ be an irreducible component such that the evaluation map $s\colon \mathcal{C}\rightarrow X$ from the universal family $\pi\colon \mathcal{C}\rightarrow M$ is dominant, but the general fiber is not irreducible. 
    Take a smooth resolution $\tilde{\mathcal{C}}\rightarrow \mathcal{C}$ and consider the composition $\tilde{s}\colon \tilde{\mathcal{C}}\rightarrow \mathcal{C}\rightarrow X$. 
    Then the finite part $f\colon Y\rightarrow X$ of the Stein factorization of $\tilde{s}$ satisfies 
    \[
        a(Y, -f^{*}K_{X}) = a(X, -K_{X}) = 1. 
    \]
\end{prop}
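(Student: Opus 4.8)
The plan is to squeeze $a(Y,-f^{*}K_{X})$ between $1$ and $1$ by means of the family of rational curves transported from $M$ to $Y$. I would write $A:=-f^{*}K_{X}$ (nef and big on $Y$), fix a resolution $\mu\colon\tilde{Y}\to Y$, and, after blowing up $\tilde{\mathcal{C}}$ further if necessary, arrange a factorization $\tilde{\mathcal{C}}\xrightarrow{\tilde{g}}\tilde{Y}\xrightarrow{f\mu}X$ with $\tilde{s}=f\mu\circ\tilde{g}$ and $\tilde{g}$ surjective; by Definition~\ref{dfn: a, b-inv}, $a(Y,A)=a(\tilde{Y},\mu^{*}A)$, and $a(X,-K_{X})=1$. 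For the \emph{upper bound}: since $f\mu\colon\tilde{Y}\to X$ is generically finite onto the Gorenstein terminal — hence canonical — threefold $X$, the ramification formula gives $K_{\tilde{Y}}=(f\mu)^{*}K_{X}+R$ with $R$ effective, so $K_{\tilde{Y}}+\mu^{*}A=R\in\overline{\mathrm{Eff}}^{1}(\tilde{Y})$ and $a(\tilde{Y},\mu^{*}A)\le 1$.

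The core of the argument is to produce a curve class $\gamma$ on $\tilde{Y}$ with $(K_{\tilde{Y}}+\mu^{*}A)\cdot\gamma=0$. I would transport the universal family of $M$ through the Stein factorization: for general $[h]\in M$ let $\tilde{h}\colon\mathbb{P}^{1}\to\tilde{\mathcal{C}}$ parametrize the fibre of $\tilde{\mathcal{C}}\to M$ over $[h]$ (so $\tilde{s}\circ\tilde{h}=h$), and set $\gamma:=(\tilde{g}\circ\tilde{h})_{*}\mathbb{P}^{1}\in N_{1}(\tilde{Y})$, which is independent of the choice since $M$ is irreducible; let $M_{\tilde{Y}}$ be the component of $\mathrm{Mor}(\mathbb{P}^{1},\tilde{Y},\gamma)$ through these points. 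Two facts are needed. First, $[h]\mapsto[\tilde{g}\circ\tilde{h}]$ is generically injective, because $f\mu\circ(\tilde{g}\circ\tilde{h})=\tilde{s}\circ\tilde{h}=h$ recovers $h$; hence $\dim M_{\tilde{Y}}\ge\dim M\ge -K_{X}\cdot\beta+3=:e+3$ by Lemma~\ref{lem: Gt exp dim}, where $\beta$ is the class parametrized by $M$. Second, the curves $\tilde{g}\circ\tilde{h}$ cover $\tilde{Y}$ (as $\tilde{g}$ is surjective and $\tilde{s}$ dominant), so $M_{\tilde{Y}}$ is a dominant component; by Lemma~\ref{lem: free locus} its general member is free, hence a smooth point of $\mathrm{Mor}(\mathbb{P}^{1},\tilde{Y})$, and $\dim M_{\tilde{Y}}=-K_{\tilde{Y}}\cdot\gamma+3$ by Lemma~\ref{lem: Gt exp dim}. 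Combining, $-K_{\tilde{Y}}\cdot\gamma\ge e$. On the other hand the projection formula gives $\mu^{*}A\cdot\gamma=-(f\mu)^{*}K_{X}\cdot\gamma=-K_{X}\cdot(f\mu)_{*}\gamma=-K_{X}\cdot\beta=e$. Thus $(K_{\tilde{Y}}+\mu^{*}A)\cdot\gamma=e-(-K_{\tilde{Y}}\cdot\gamma)\le 0$; but $K_{\tilde{Y}}+\mu^{*}A=R$ is pseudoeffective, while $\gamma$ — the class of a member of a family of curves covering $\tilde{Y}$ — lies in $\mathrm{Nef}_{1}(\tilde{Y})$ (no prime divisor contains every such curve, so $\gamma\cdot D\ge 0$ for every prime, hence every pseudoeffective, divisor $D$). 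Therefore $(K_{\tilde{Y}}+\mu^{*}A)\cdot\gamma=0$.

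Finally, for the \emph{lower bound}: if $a(\tilde{Y},\mu^{*}A)<1$, then $K_{\tilde{Y}}+(1-\varepsilon)\mu^{*}A$ would be pseudoeffective for some $\varepsilon>0$, yet it pairs with the movable class $\gamma$ to give $0-\varepsilon e<0$, a contradiction. Hence $a(Y,-f^{*}K_{X})=a(\tilde{Y},\mu^{*}A)=1=a(X,-K_{X})$.

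I expect the main obstacle to be the dimension bookkeeping in the core step: ensuring that passing through the Stein factorization and a resolution does not drop the dimension of the transported family below $\dim M$ — this is precisely why one records that $[h]$ is reconstructible from its image in $Y$ — together with keeping track of the numerical class $\gamma$ along the way. A smaller but genuine technical point is the effectivity of the ramification divisor $R$ over the terminal singular points of $X$, which is exactly where Gorenstein terminality of $X$ enters. When the general fibre of $s$ is irreducible the Stein factorization is trivial and the statement is immediate, so the argument above is uniform.
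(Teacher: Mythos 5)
The paper does not give its own proof of this proposition but imports it from \cite{Okamura2024Gt} (Proposition 4.1; cf.\ \cite{LT2019Compos}, Proposition 5.15), and your argument is correct and essentially the same as the one in those sources: transport the dominant family to a resolution of $Y$, use that a general member of a dominant family on the smooth model is free together with the lower bound of Lemma \ref{lem: Gt exp dim} to get $-K_{\tilde{Y}}\cdot\gamma \ge -K_{X}\cdot\beta = \mu^{*}A\cdot\gamma$, pair with the movable class $\gamma$ for $a\ge 1$, and use the discrepancy/ramification inequality over the canonical threefold $X$ for $a\le 1$. As you note, the reducibility of the general fiber is not needed for the equality of $a$-invariants (it only guarantees $\deg f\ge 2$, i.e., that $f$ is a genuine $a$-cover), and the effectivity of $K_{\tilde{Y}}-(f\mu)^{*}K_{X}$ is justified in the standard way by passing to a model dominating a resolution of $X$.
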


\begin{dfn}\label{dfn: a-cov face contracting}
    Let $X$ be a terminal $\mathbb{Q}$-factorial weak Fano variety. 
    \begin{itemize}
        \item We say that $f\colon Y\rightarrow X$ is an $a$-\textit{cover} if it is a dominant generically finite morphism of degree at least $2$ such that $a(Y, -f^{*}K_{X}) = 1$. 

        \item We say that an $a$-cover $f\colon Y\rightarrow X$ is \textit{face contracting} if the induced map $f_{*}\colon F(Y, -f^{*}K_{X})\rightarrow \mathrm{Nef}_{1}(X)$ is not injective. 
    \end{itemize}
\end{dfn}

In light of Propositions \ref{prop: non-dom vs higher a-inv} and \ref{prop: ev with reducible fib vs a-cov}, we define \textit{Manin components}. 
\begin{dfn}\label{dfn: Manin comp}
    Let $X$ be a terminal $\mathbb{Q}$-factorial weak Fano variety. 
    \begin{itemize}
        \item A morphism $f\colon Y\rightarrow X$ is a \textit{breaking morphism} if one of the following holds: 
        \begin{enumerate}[(1)]
            \item $f$ satisfies $a(Y, -f^{*}K_{X}) > 1$, 

            \item $f$ is an $a$-cover with Iitaka dimension $\kappa(K_{Y}-f^{*}K_{X}) > 0$, or 
            
            \item $f$ is face contracting. 
        \end{enumerate}

        \item An irreducible component $M \subset \mathrm{Mor}(\mathbb{P}^{1},X)$ is \textit{accumulating} if there exists a breaking morphism $f\colon Y\rightarrow X$ and an irreducible component $N\subset \mathrm{Mor}(\mathbb{P}^{1},Y)$ such that $f$ induces a dominant map $f_{*}\colon N\rightarrow M$. 

        \item An irreducible component $M\subset \mathrm{Mor}(\mathbb{P}^{1},X)$ is \textit{Manin} if it is not accumulating. 
    \end{itemize}
\end{dfn}

\begin{conj}[Geometric Manin's Conjecture, cf.\ \cite{Tanimoto2021GMCintro}]\label{conj: GMC}
    Let $X$ be a terminal $\mathbb{Q}$-factorial weak Fano variety. 
    Then there exists a nef $1$-cycle class $\alpha \in \mathrm{Nef}_{1}(X)_{\mathbb{Z}}$ such that for any $\beta \in \alpha + \mathrm{Nef}_{1}(X)_{\mathbb{Z}}$, the moduli space $\mathrm{Mor}(\mathbb{P}^{1},X,\beta)$ contains exactly $|\mathrm{Br}_{\mathrm{nr}}(k(X)/k)|$ Manin components. 
\end{conj}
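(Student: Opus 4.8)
Since this is the general form of Geometric Manin's Conjecture, which remains open, the plan is to describe the framework through which one establishes it in a given case and to isolate the obstructions to carrying it out unconditionally. First I would separate the statement into two assertions: that the number of Manin components of $\Mor(\mathbb{P}^{1},X,\beta)$ stabilizes for $\beta$ sufficiently deep in the nef cone, and that the stable count equals $|\mathrm{Br}_{\mathrm{nr}}(k(X)/k)|$. For the first assertion, the strategy is to exploit the classification of non-Manin components encoded in Definition \ref{dfn: Manin comp}: by Propositions \ref{prop: non-dom vs higher a-inv} and \ref{prop: ev with reducible fib vs a-cov}, accumulating components arise precisely from breaking morphisms, namely subvarieties with strictly larger $a$-invariant, $a$-covers of positive relative Iitaka dimension, and face-contracting $a$-covers. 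One would show that each such source contributes components only in a bounded region of the nef cone, so that for $\beta$ in a suitable translate $\alpha + \mathrm{Nef}_{1}(X)_{\mathbb{Z}}$ every component is Manin up to those coming from a fixed finite list of covers.

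The core of the first assertion is then an induction on the degree $\beta \cdot (-K_{X})$ driven by movable bend-and-break: one must show that every Manin component of large degree contains a chain of two free curves whose classes lie in lower-degree Manin components, and conversely that gluing free curves along a point produces smoothable members lying in a single Manin component. The inputs are the expected-dimension and smoothness statement of Lemma \ref{lem: Gt exp dim}, the genericity of freeness from Lemma \ref{lem: free locus}, and a gluing/deformation argument identifying the component containing a given chain or comb. Establishing this correspondence reduces the counting of Manin components at all large degrees to a finite base case, exactly as in the Artin--Mumford application carried out later in the paper.

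For the second assertion, the plan is to match the stabilized count to the Brauer group through the identity relating $|\mathrm{Br}(V)|$ to $|\Ker(B_{1}(V)_{\mathbb{Z}} \to N_{1}(V)_{\mathbb{Z}})|$ recalled in the introduction: the algebraic-equivalence classes inside a fixed numerical class $\beta$ should be in bijection with the Manin components, so that distinct components detect distinct algebraic classes of $1$-cycles, while the face-contracting condition in the definition of breaking morphisms is precisely what rules out the covers that would merge such classes and thereby undercount.

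The hard part, and the reason the conjecture is open in general, is twofold. Movable bend-and-break is itself conjectural for arbitrary terminal $\mathbb{Q}$-factorial weak Fano varieties, so the induction cannot be closed without additional hypotheses (such as dimension at most three, where \cite{Okamura2024Gt} supplies it); without it one cannot even guarantee that the number of Manin components is eventually constant. Equally delicate is the precise matching of Manin components with algebraic-equivalence classes: one must exclude the possibility that two algebraically inequivalent but numerically equivalent families lie in one component, or that a single algebraic class splits into several, which requires controlling the monodromy of the universal family and the geometry of the relevant $a$-covers. I expect this matching to be the main obstacle, since it is where the birational invariant $\mathrm{Br}_{\mathrm{nr}}(k(X)/k)$ must be reconciled with a purely geometric component count.
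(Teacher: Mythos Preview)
The statement you are addressing is Conjecture~\ref{conj: GMC}, and the paper does not prove it: it is presented as an open conjecture, and only the special case of Artin--Mumford double solids is established later (Corollary~\ref{cor: AM GMC}). You correctly recognize this and instead outline the general strategy, which is the appropriate response. Your description of the framework---classifying accumulating components via breaking morphisms (Propositions~\ref{prop: non-dom vs higher a-inv} and~\ref{prop: ev with reducible fib vs a-cov}), running induction through movable bend-and-break, and linking the count to $\mathrm{Br}_{\mathrm{nr}}$ via the kernel $B_{1}\to N_{1}$---matches the approach the paper takes in the AM case (Sections~\ref{subsection: GMC} and~\ref{subsection: higher deg rational curves on AM}).

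One point deserves correction. You write that breaking morphisms ``contribute components only in a bounded region of the nef cone,'' suggesting that for deep enough $\beta$ all components become Manin. This is not how the mechanism works: accumulating components persist in every degree (in the AM example, the multiple-cover loci $N_{d}^{\pm}$ are accumulating for all $d\ge 2$). The actual content of the strategy is that the \emph{sources} of accumulating components---the breaking morphisms themselves---form a controllable family, so that one can identify and discard the accumulating components uniformly, not that they eventually vanish. Your subsequent clause ``up to those coming from a fixed finite list of covers'' is closer to the truth, but the earlier phrasing should be revised. Otherwise your summary of the obstructions (movable bend-and-break being conjectural in general, and the difficulty of matching Manin components bijectively to algebraic-equivalence classes) is accurate and aligns with the paper's discussion in Remark~\ref{rem: nr Br}.
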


\begin{rem}[cf.\ \cite{LRT2026nonfree_section}, Remark 4.13]\label{rem: nr Br}
    In Geometric Manin's Conjecture \ref{conj: GMC}, the \textit{unramified Brauer group} appears. 
    It is a stably-birational invariant, and when $X$ is a smooth projective variety, it is isomorphic to the \textit{Brauer group} $\mathrm{Br}(X) = H_{\text{\'{e}t}}^{2}(X, \mathbb{G}_{m})$ (e.g., \cite{CT-Skorobogatov2021}). 
    For a smooth rationally connected projective variety, the Brauer group is isomorphic to the torsion part of $H_{2}(X, \mathbb{Z})$. 
    Let $X$ be a smooth rationally connected projective threefold. 
    Firstly, \cite[Theorem 1]{BlochSrinivas1983algcycle} proves that algebraic equivalence and homological equivalence of $1$-cycles on $X$ coincide. 
    Secondly, \cite[Theorem 2]{Voisin2006IntegralHodge} proves the integral Hodge conjecture for $1$-cycles on $X$. 
    Therefore, we obtain the equality 
    \[
        |\mathrm{Br}(X)| = |\Ker(B_{1}(X)_{\mathbb{Z}}\rightarrow N_{1}(X)_{\mathbb{Z}})|, 
    \]
    where $B_{1}(X)_{\mathbb{Z}}$ denotes the set of algebraically equivalent classes of $1$-cycles. 
    It is a conjecture when $X$ is a smooth rationally connected projective variety of dimension greater than $3$. 
    Thus, Geometric Manin's Conjecture for a smooth weak Fano variety $X$ predicts the uniqueness of Manin component in $\mathrm{Mor}(\mathbb{P}^{1},X)$ parametrizing rational curves $f\colon \mathbb{P}^{1}\rightarrow X$ such that $f_{*}\mathbb{P}^{1} \sim_{\mathrm{alg}} \gamma$, algebraically equivalent, for each sufficiently positive $\gamma \in B_{1}(X)_{\mathbb{Z}}$. 
    As mentioned in the Introduction, Geometric Manin's Conjecture is known for various cases where the Brauer groups are trivial. 
\end{rem}

\subsection{Conic bundles on degree $2$ del Pezzo surfaces}\label{subsection: conics on dP2}
The aim of this subsection is to prove Lemma \ref{lem: conic bundles on dP2}, which is used to show Theorem \ref{thm: AM space of conics}. 

Let $S$ be a smooth del Pezzo surface of degree $2$. 
It is realized as the blow up of $\mathbb{P}^{2}$ at $7$ points $p_{1}, \dots, p_{7}$ in general position. 
For $i, j \in \{1,\dots, 7\}$ with $i\neq j$, we let 
\begin{itemize}
    \item $A_{i}$ be the exceptional curve over $p_{i}$, 

    \item $B_{ij}$ the strict transform of the line through $p_{i}, p_{j}$, 

    \item $C_{ij}$ the strict transform of the conic through all $p_{k}$ but $p_{i}, p_{j}$, and 

    \item $D_{i}$ the strict transform of the nodal cubic through all $p_{1}, \dots, p_{7}$ with node at $p_{i}$. 
\end{itemize}
These are the all $56 = 7 + 21 + 21 + 7$ lines on $S$. 
The surface $S$ admits a double cover $\pi\colon S\rightarrow \mathbb{P}^{2}$ defined by the anticanonical system $|-K_{S}|$. 
The Geiser involution on $S$ (i.e., the Galois involution over $\pi$) maps $A_{i}$ and $B_{ij}$ to $D_{i}$ and $C_{ij}$ respectively. 

One can classify conic bundles on $S$ in terms of singular fibers. 
Indeed, any conic bundle on $S$ has $6$ singular fibers, and the description of singular fibers is as follows: 
\begin{enumerate}[(I)]
    \item For each $i\in \{1,\cdots, 7\}$, the set of singular fibers is $\{A_{j} + B_{ij}\mid j\neq i\}$; 

    \item For each subset $\Lambda\subset \{1,\dots, 7\}$ with $|\Lambda| = 3$, the set of singular fibers is $\{A_{\lambda} + C_{\Lambda\setminus \{\lambda\}} \mid \lambda \in \Lambda \} \cup \{B_{\Gamma} + B_{\Delta} \mid \{1,\dots, 7\}\setminus \Lambda = \Gamma \sqcup \Delta, |\Gamma| = 2\}$, where $B_{\Gamma} := B_{ij}$ when $\Gamma = \{i,j\}\subset \{1,\dots, 7\}$, and so on; 

    \item For each $i,j\in \{1,\dots, 7\}$ with $i\neq j$, the set of singular fibers is $\{A_{j} + D_{i}\} \cup \{B_{ik} + C_{jk}\mid k\neq i,j\}$; 

    \item For each subset $\Lambda\subset \{1,\dots, 7\}$ with $|\Lambda| = 3$, the set of singular fibers is $\{B_{\Lambda\setminus \{\lambda\}} + D_{\lambda}\mid \lambda \in \Lambda \} \cup \{C_{\Gamma} + C_{\Delta} \mid \{1,\dots, 7\}\setminus \Lambda = \Gamma \sqcup \Delta, |\Gamma| = 2\}$; or

    \item For each $i\in \{1,\cdots, 7\}$, the set of singular fibers is $\{C_{ij} + D_{j}\mid j\neq i\}$. 
\end{enumerate}

Hence there are $126 = 7 + 35 + 42 + 35 + 7$ conic bundles. 

\begin{lem}\label{lem: conic bundles on dP2}
    Notation as above. 
    Let $\mathrm{sign}\colon F(S)\rightarrow \{1,-1\}$ be a map from the set of lines on $S$ which satisfies $\mathrm{sign}(A_{i}) \neq \mathrm{sign}(D_{i})$, $\mathrm{sign}(B_{ij}) \neq \mathrm{sign}(C_{ij})$ for any $i,j\in \{1, \dots, 7\}$ with $i\neq j$. 
    Suppose furthermore that for any conic bundle on $S$, if some singular fiber $\ell + \ell'$ satisfies $\mathrm{sign}(\ell) \neq \mathrm{sign}(\ell')$, then so does any singular fiber. 
    Then $S$ admits a conic bundle having singular fibers $\ell_{1} + \ell_{2}$ and $\ell'_{1} + \ell'_{2}$ such that $\mathrm{sign}(\ell_{1}) = \mathrm{sign}(\ell_{2}) = 1$ and $\mathrm{sign}(\ell'_{1}) = \mathrm{sign}(\ell'_{2}) = -1$. 
\end{lem}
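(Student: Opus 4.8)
The plan is to recast the hypothesis as a combinatorial statement about a $\pm 1$-labelling of the $56$ lines and then run a short case analysis, using the conic-bundle hypothesis (each conic bundle has either all six singular fibres monochromatic or all six mixed) applied to carefully chosen bundles of types (I), (II), (III). Write $a_i := \mathrm{sign}(A_i)$ and $b_{ij} := \mathrm{sign}(B_{ij})$; the first two hypotheses then give $\mathrm{sign}(D_i) = -a_i$ and $\mathrm{sign}(C_{ij}) = -b_{ij}$, so the labelling is encoded by the data $(a_i)_i$ and $(b_{ij})_{i\ne j}$. Call a singular fibre $\ell+\ell'$ \emph{monochromatic of colour} $\varepsilon$ if $\mathrm{sign}(\ell)=\mathrm{sign}(\ell')=\varepsilon$, and \emph{mixed} otherwise; we must produce a monochromatic conic bundle realising both colours. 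Since a global sign flip and a permutation of $\{1,\dots,7\}$ both preserve all hypotheses and the desired conclusion, I would split on whether the $a_i$ all coincide.

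Case (A): all $a_i$ equal, say $a_i=1$ after a global flip. For $i\ne j$ the type-(III) bundle attached to the ordered pair $(i,j)$ contains the fibre $A_j+D_i$, of sign pattern $(1,-1)$ and hence mixed; so by the conic-bundle hypothesis each fibre $B_{ik}+C_{jk}$ $(k\ne i,j)$ is mixed, i.e.\ $b_{ik}=b_{jk}$. Letting $i,j,k$ vary (this uses $7\ge 3$) and using $b_{ij}=b_{ji}$ forces all $b_{ij}$ to equal a single constant $\beta$. Now consider the type-(II) bundle for $\Lambda=\{1,2,3\}$: its fibres $A_\lambda+C_{\Lambda\setminus\{\lambda\}}$ have pattern $(1,-\beta)$, and its fibres $B_\Gamma+B_\Delta$ have pattern $(\beta,\beta)$. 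If $\beta=1$ these are mixed and $(1,1)$, contradicting the conic-bundle hypothesis; hence $\beta=-1$, and then this same bundle has three fibres of colour $+1$ and three of colour $-1$ — which is what we want.

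Case (B): the $a_i$ are not all equal, so after a permutation and a flip we may take $a_1=1$ and $a_2=-1$. The type-(III) bundle for $(1,2)$ has the fibre $A_2+D_1$ of pattern $(-1,-1)$, so by the conic-bundle hypothesis all its fibres are monochromatic, and $B_{1k}+C_{2k}$ has pattern $(b_{1k},b_{1k})$ for $k\in\{3,\dots,7\}$. If some $b_{1k}=1$, this bundle already has a $(-1,-1)$-fibre and a $(1,1)$-fibre and we are done; so assume $b_{1k}=-1$, hence $b_{2k}=1$, for all $k\in\{3,\dots,7\}$. The type-(I) bundle for $i=1$ then has no fibre of colour $+1$, so it is ``all colour $-1$'' or ``all mixed'', and inspecting the fibres $A_j+B_{1j}$ shows these force, respectively, $a_3=\dots=a_7=-1$ with $b_{12}=-1$, or $a_3=\dots=a_7=1$ with $b_{12}=1$. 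In the first subcase, the type-(III) argument applied to the pairs $(1,l)$ with $l\ge 3$ pins down $b_{jk}=1$ for all $j,k\in\{2,\dots,7\}$; then the type-(II) bundle for $\Lambda=\{2,3,4\}$ has its three $A$-fibres of colour $-1$ but its three $B$-fibres mixed (a partition of $\{1,5,6,7\}$ into two pairs always separates the index $1$), contradicting the hypothesis. In the second subcase, the type-(I) argument applied to $i=l$ with $l\ge 3$ pins down $b_{jk}=-1$ for all $j,k\in\{3,\dots,7\}$; then the type-(II) bundle for $\Lambda=\{1,2,3\}$ has its three $A$-fibres mixed but its three $B$-fibres of colour $-1$, again a contradiction. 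Hence in Case (B) we must be in the ``done'' branch.

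The genuinely delicate point is the bookkeeping in Case (B): one must verify that the two options for the type-(I) bundle are exhaustive and that each resulting rigid labelling really does contain a conic bundle violating the hypothesis. The reason the type-(II) bundles are the right witnesses is that their singular fibres are the only ones involving all three families $A_\bullet$, $B_\bullet$, $C_\bullet$ simultaneously, so they couple the $(a_i)$-data to the $(b_{ij})$-data, whereas the type-(I) and (III) bundles merely propagate linear constraints among the labels. Before writing the final proof I would also re-verify the stated enumeration of the $126$ conic bundles and of their singular fibres, since every sign-pattern computation above is read off from that list.
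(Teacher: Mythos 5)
Your argument is correct and follows essentially the same route as the paper: an elementary sign-chase over the explicit list of conic bundles, in which type (I)/(III) bundles propagate constraints between the $A$-, $B$-, $C$-, $D$-labels and a type (II) bundle serves as the witness or the contradiction. The only difference is bookkeeping — the paper splits according to the number $m$ of positive $A_i$ (cases $m=7$, $m=6$, $2\le m\le 5$, with the remaining $m$ handled by the global flip), while you split into "all $A_i$ equal" versus "not all equal" with subcases driven by the type (I) bundle at the index $1$ — and your deductions check out in each subcase.
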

\begin{proof}
    After reordering the indices, we may assume that there exists $ 0 \le m \le 7$ such that $\mathrm{sign}(A_{i}) = 1$ if and only if $i\le m$. 
    We prove the claim dividing into three cases. 
    \begin{enumerate}[(1)]
        \item First, we suppose $m = 7$. 
        Then $\mathrm{sign}(A_{i}) = 1$ for any $i\in \{1,\dots, 7\}$. 
        Consider the conic bundles of type (III). 
        For each pair $i, j\in \{1,\dots, 7\}$ with $i\neq j$, the components of the fiber $A_{j} + D_{i}$ have signs $\mathrm{sign}(A_{j}) = 1, \mathrm{sign}(D_{i}) = -1$. 
        By the assumption, we have $\mathrm{sign}(B_{ik}) \neq \mathrm{sign}(C_{jk})$. 
        Hence all $B_{ij}$ have the same sign. 
        We then consider the conic bundle of type (II) for each $\Lambda \subset \{1,\dots, 7\}$. 
        Since the components of $B_{\Gamma} + B_{\Delta}$ have the same sign, we see that $\mathrm{sign}(C_{\Lambda\setminus \{\lambda\}}) = \mathrm{sign}(A_{\lambda}) = 1$ for any $\lambda \in \Lambda$. 
        Varying $\Lambda \subset \{1,\dots, 7\}$, we conclude that $\mathrm{sign}(B_{ij}) = -1$ for any $i\neq j$. 
        Hence the claim holds. 

        \item Suppose that $m = 6$. 
        Assume that the claim does not hold. 
        Consider the conic bundles of type (III) for $i = 7$ and $j\neq 7$. 
        Since $\mathrm{sign}(A_{j}) = \mathrm{sign}(D_{7}) = 1$, we have $\mathrm{sign}(B_{k7}) = \mathrm{sign}(C_{jk}) = 1$ for any $j\neq k\le 6$. 
        Hence we obtain that
        \[
        \mathrm{sign}(B_{\Gamma}) = 
        \begin{cases}
            1, & \text{if }\ 7 \in \Gamma\\
            -1, & \text{otherwise. } 
        \end{cases}
        \]
        Now we consider the conic bundle of type (II) for $\Lambda = \{1,2,3\}$. 
        The components of the fiber $A_{1} + C_{23}$ have the same sign $1$, while the components of the fiber $B_{45} + B_{67}$ have different signs, a contradiction. 

        \item Finally, suppose that $2\le m\le 5$. 
        Assume that the claim does not hold. 
        Consider the conic bundles of type (III). 
        For the pair $(i,j) = (6,1)$, since the components of the fiber $A_{1} + D_{6}$ have the same sign $1$, so do the components of the fiber $B_{67} + C_{17}$ (the case $k = 7$). 
        On the other hand, for the pair $(i,j) = (7,2)$, since the components of the fiber $A_{2} + D_{7}$ have the same sign $1$, so do the components of the fiber $B_{17} + C_{12}$ (the case $k = 1$). 
        However, these imply $\mathrm{sign}(C_{17}) = 1 = \mathrm{sign}(B_{17})$, a contradiction. 
    \end{enumerate}
    Since one can apply the arguments in (1), (2) to the map $-\mathrm{sign}$, the claim holds for $m= 0,1$ as well. 
\end{proof}

\section{Artin-Mumford double solids}\label{section: AM}
\subsection{Construction of AM double solids}\label{subsection: AM constr}
Firstly, we construct Artin-Mumford double solids: 
\begin{constr}[e.g., \cite{Beauville2016Luroth}, Section 6.3]\label{constr: AM modern}
    For an integer $r$, let $S_{r} \subset |\mathcal{O}_{\mathbb{P}^{3}}(2)|\cong \mathbb{P}^{9}$ be the locus parametrizing quadrics of rank at most $r$ in $\mathbb{P}^{3}$. 
    Then there is a stratification
    \[
        \emptyset = S_{0} \subset S_{1} \subset S_{2} \subset S_{3} \subset S_{4} = |\mathcal{O}_{\mathbb{P}^{3}}(2)|. 
    \]
    Moreover, we see that 
    \begin{itemize}
        \item $S_{3}$ is a hypersurface in $S_{4}$ of degree $4$ whose singular locus is $S_{2}$, 
        
        \item $S_{2}$ is an irreducible subvariety of dimension $6$ and degree $10$, and 

        \item $S_{1}$ is an irreducible subvariety of dimension $3$ and degree $8$.
    \end{itemize}

    Let $\mathbb{G}(1,\mathbb{P}^{3})$ denote the Grassmannian of lines in $\mathbb{P}^{3}$. 
    Consider the incidence correspondence 
    \[
        \Sigma := \{(\ell, Q) \in \mathbb{G}(1,\mathbb{P}^{3}) \times |\mathcal{O}_{\mathbb{P}^{3}}(2)| \mid \ell \subset Q\}
    \]
    with the projections 
\[\begin{tikzcd}
	\Sigma & {|\mathcal{O}_{\mathbb{P}^{3}}(2)|.} \\
	{\mathbb{G}(1,\mathbb{P}^{3})}
	\arrow["q", from=1-1, to=1-2]
	\arrow["p"', from=1-1, to=2-1]
\end{tikzcd}\]

    Take the Stein factorization $q \colon \Sigma\xrightarrow{g} T_{4} \xrightarrow{h} S_{4}$. 
    Then $h\colon T_{4}\rightarrow S_{4}$ is a double cover branched along $S_{3}$. 

    Now, we take a general linear system $W \subset |\mathcal{O}_{\mathbb{P}^{3}}(2)|$ of dimension $3$, so that 
    \begin{itemize}
        \item $W$ is base point free, and
    
        \item any line on $\mathbb{P}^{3}$ which lies in the singular locus of some quadric $Q\in W$ is not contained in any other quadric of $W$. 
    \end{itemize}
    In this situation, the following also hold: 
    \begin{itemize}
        \item $\mathcal{D}_{W} := S_{3}\cap W$ is a nodal quartic surface, 
    
        \item $\mathrm{Sing}(\mathcal{D}_{W}) = S_{2}\cap W = \{q_{1}, \dots, q_{10}\}$ consists of ten nodes, and
    
        \item $S_{1}\cap W = \emptyset$.
    \end{itemize}
    Let $f\colon X:= h^{-1}(W)\rightarrow W$ be the restriction of $h$, which is the double cover branched along the quartic surface $\mathcal{D}_{W}$. 
    Then $X$ has ten nodes $p_{1}, \dots, p_{10}$ as singularity.
    We will call $X$ an \textit{AM double solid}. 
\end{constr}

Let $\phi \colon \tilde{X}\rightarrow X$ be the blow up at the nodes $p_{1}, \dots, p_{10}$, and let $\psi\colon \tilde{W}\rightarrow W$ be the blow up at $q_{1}, \dots, q_{10}$. 
Then we obtain a commutative diagram
\[\begin{tikzcd}
	{\tilde{X}} & X \\
	{\tilde{W}} & W
	\arrow["\phi", from=1-1, to=1-2]
	\arrow["{\tilde{f}}"', from=1-1, to=2-1]
	\arrow["f", from=1-2, to=2-2]
	\arrow["\psi"', from=2-1, to=2-2]
\end{tikzcd}\]
where $\tilde{f}$ is the double cover branched along the strict transform $\tilde{\mathcal{D}}_{W}$ of the quartic surface $\mathcal{D}_{W}\subset W$. 
Let $\iota\colon X\rightarrow X$ (resp.\ $\tilde{\iota}\colon \tilde{X}\rightarrow \tilde{X}$) be the Galois involution over $f$ (resp.\ $\tilde{f}$). 

\begin{thm}[\cite{ArtinMumford1972}]\label{thm: AM irrational}
    Notation as above. 
    Then the Brauer group of $\tilde{X}$ is non-trivial: 
    \[\mathrm{Br}(\tilde{X})\cong \mathbb{Z}/2\mathbb{Z}. \]
    Moreover, $\tilde{X}$ (and hence $X$) is unirational but not stably-rational.  
\end{thm}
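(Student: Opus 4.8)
The statement combines three assertions: unirationality of $\tilde X$, the isomorphism $\mathrm{Br}(\tilde X)\cong\mathbb{Z}/2\mathbb{Z}$, and the failure of stable rationality. The third is formal once the second is known: the Brauer group is a birational invariant of smooth projective varieties and satisfies $\mathrm{Br}(V\times\mathbb{P}^{m})\cong\mathrm{Br}(V)$, so any stably rational smooth projective variety has vanishing Brauer group; since $\tilde X$ is smooth, projective, and birational to $X$, the non-vanishing $\mathrm{Br}(\tilde X)\neq 0$ forces $\tilde X$, hence $X$, to be non-stably-rational. Likewise, unirationality of $\tilde X$ is equivalent to that of $X$ because $\phi$ is birational. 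So the plan reduces to proving (i) $X$ is unirational and (ii) $\mathrm{Br}(\tilde X)\cong\mathbb{Z}/2\mathbb{Z}$.

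For (i), I would argue with the incidence correspondence $\Sigma=\{(\ell,Q):\ell\subset Q\}$ of Construction~\ref{constr: AM modern}. A general line $\ell\subset\mathbb{P}^{3}$ is contained in a unique quadric $Q_{\ell}\in W$: it lies in at least one by a dimension count (the linear projection $W\dashrightarrow|\mathcal{O}_{\ell}(2)|$ must have a base point, as $\dim W=3>2=\dim|\mathcal{O}_{\ell}(2)|$), and in at most one because a general line is not contained in the base locus of any pencil of $W$ — such base loci are general elliptic quartic curves, and the lines that occur in a degenerate one sweep out a subvariety of $\mathbb{G}(1,\mathbb{P}^{3})$ of dimension at most $3$. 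Moreover $\ell\mapsto Q_{\ell}$ is dominant onto $W$, so for general $\ell$ the quadric $Q_{\ell}$ is smooth and $\ell$ lies on exactly one of its two rulings; that ruling is a well-defined point $x(\ell)$ of the fibre of $f\colon X\to W$ over $Q_{\ell}$. The resulting rational map $\mathbb{G}(1,\mathbb{P}^{3})\dashrightarrow X$ is dominant, since over a general $Q\in W$ both points of the fibre arise as some $x(\ell)$, and $\mathbb{G}(1,\mathbb{P}^{3})$ is rational; hence $X$ is unirational.

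Part (ii) is the substance of the theorem, and is precisely the computation of Artin--Mumford, which I would follow. One realises $\tilde X$, up to birational modification, as a standard conic bundle $\pi\colon Y\to\mathbb{P}^{2}$ (for instance by projecting $W=\mathbb{P}^{3}$ from a general point and pulling back, or via one of the classical models), with discriminant curve $\Delta\subset\mathbb{P}^{2}$ and the connected étale double cover $\tilde\Delta\to\Delta$ parametrising the two lines in each degenerate fibre. Since $\tilde X$ is rationally connected one has $\mathrm{Br}(\tilde X)\cong H^{3}(\tilde X,\mathbb{Z})_{\mathrm{tors}}$, and for a conic bundle over a rational surface this torsion is read off from the Prym variety of $\tilde\Delta\to\Delta$, being governed by the local behaviour of the double cover at the nodes of $\Delta$. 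In the Artin--Mumford configuration $\Delta$ is a suitable nodal plane sextic for which this analysis produces exactly one nontrivial $2$-torsion class and no further torsion, whence $\mathrm{Br}(\tilde X)\cong\mathbb{Z}/2\mathbb{Z}$. (Equivalently, one may exhibit the class directly as the obstruction to a rational section of the universal ruling over $X$ and then show, by the same discriminant analysis, that it is nonzero.) This torsion computation — establishing that the $2$-torsion of $H^{3}(\tilde X,\mathbb{Z})$ is \emph{exactly} $\mathbb{Z}/2\mathbb{Z}$, neither larger nor zero — is the one genuinely hard step, and I would invoke \cite{ArtinMumford1972} for it rather than reproduce it.
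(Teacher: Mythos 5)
The paper offers no argument for this statement at all: it is quoted directly from \cite{ArtinMumford1972}, so the ``paper's proof'' is the citation. Your proposal is consistent with that and adds correct supporting detail. The reduction of stable irrationality to $\mathrm{Br}(\tilde X)\neq 0$ via the stable birational invariance of the Brauer group is right, and your unirationality argument is the classical one and matches the paper's Construction \ref{constr: AM modern}: a general line lies on exactly one quadric of $W$ (it is not a Reye line, and the Reye lines form a surface in $\mathbb{G}(1,\mathbb{P}^{3})$), so $\Sigma_{W}\rightarrow \mathbb{G}(1,\mathbb{P}^{3})$ is birational while $g|_{\Sigma_{W}}\colon \Sigma_{W}\rightarrow X$ is dominant, exhibiting $X$ as dominated by a rational fourfold. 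Deferring the actual computation of the $2$-torsion to \cite{ArtinMumford1972} is exactly what the paper does, so no gap there. One small correction to your sketch of that computation: the conic bundle structure is \emph{not} obtained by projecting $W\cong\mathbb{P}^{3}$ from a general point --- that projection has genus-one fibres, since a general line meets $\mathcal{D}_{W}$ in four points. One must project from one of the ten nodes $q_{i}$ (as in the paper's Lemma \ref{lem: AM alg equiv ve num equiv}), so that each line through $q_{i}$ meets $\mathcal{D}_{W}$ residually in two points and its preimage is a conic; the discriminant is then the union of two elliptic cubics $\delta_{i}^{+}\cup\delta_{i}^{-}$, which is the nodal sextic you refer to. With that adjustment your outline is an accurate account of the Artin--Mumford argument.
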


\begin{rem}\label{rem: Br grp vs torsion alg classes}
    As discussed in Remark \ref{rem: nr Br}, $\tilde{X}$ contains two algebraically equivalent classes in each numerical class of $1$-cycles. 
    In Subsection \ref{subsection: alg equiv vs num equiv}, we describe the difference between algebraic and numerical classes on $\tilde{X}$. 
\end{rem}

\begin{lem}\label{lem: AM factorial}
    An AM double solid $X$ is factorial.
\end{lem}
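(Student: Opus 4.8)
The plan is to reduce the statement to the computation of the Picard number of the blow‑up $\tilde{X}$. First note that, since $X$ has only ordinary double points and the local class group of a three‑dimensional node is $\mathbb{Z}$, hence torsion free, a Weil divisor on $X$ is Cartier as soon as it is $\mathbb{Q}$‑Cartier; thus ``factorial'' coincides with ``$\mathbb{Q}$‑factorial'' for $X$, and since $\mathrm{Cl}(X)/\Pic(X)$ embeds into $\bigoplus_{i=1}^{10}\mathrm{Cl}(\mathcal{O}_{X,p_{i}})\cong\mathbb{Z}^{10}$ it is torsion free; so it suffices to prove $\rank\mathrm{Cl}(X)=\rank\Pic(X)$. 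As the ample class $H=f^{*}\mathcal{O}_{W}(1)$ already gives $\rank\Pic(X)\ge 1$, it is enough to prove $\rank\mathrm{Cl}(X)=1$.

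Next I would pass to $\phi\colon\tilde{X}\to X$, whose exceptional divisor over each node $p_{i}$ is an irreducible quadric surface $E_{i}$. Because $\tilde{X}$ is smooth, $\mathrm{Cl}(\tilde{X})=\Pic(\tilde{X})$, and since the kernel of the surjection $\phi_{*}\colon\mathrm{Cl}(\tilde{X})\to\mathrm{Cl}(X)$ is freely generated by $[E_{1}],\dots,[E_{10}]$, one gets $\rank\mathrm{Cl}(X)=\rho(\tilde{X})-10$. As $\tilde{X}$ is rationally connected (it dominates the unirational $X$), $\rho(\tilde{X})=b_{2}(\tilde{X})$, so the lemma becomes the assertion $b_{2}(\tilde{X})=11$. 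This is exactly what the topological analysis of Artin--Mumford gives (Theorem \ref{thm: AM irrational}, \cite{ArtinMumford1972}; cf.\ \cite{Beauville2016Luroth}): the degree‑$2$ cohomology of $\tilde{X}$ is spanned by $H$ and the ten classes $[E_{i}]$, the only extra cohomology being the $2$‑torsion in $H^{3}(\tilde{X},\mathbb{Z})$ responsible for $\mathrm{Br}(\tilde{X})=\mathbb{Z}/2\mathbb{Z}$; in particular $b_{2}(\tilde{X})=11$.

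A second, more hands‑on route, which makes transparent where the genericity of $W$ enters, runs through the classical defect criterion for nodal double solids: $X$ is $\mathbb{Q}$‑factorial if and only if the ten nodes $q_{1},\dots,q_{10}=S_{2}\cap W$ of the branch quartic $\mathcal{D}_{W}$ impose independent conditions on $|\mathcal{O}_{\mathbb{P}^{3}}(2)|$, equivalently---since $h^{0}(\mathbb{P}^{3},\mathcal{O}(2))=10$---no quadric surface passes through all ten of them. Here one uses that $\{q_{1},\dots,q_{10}\}$ is a general zero‑dimensional linear section of the non‑degenerate, arithmetically Cohen--Macaulay determinantal variety $S_{2}\subset\mathbb{P}^{9}$ (the locus of symmetric matrices of rank $\le 2$, cut out by the $3\times 3$ minors of the generic symmetric matrix), so it inherits the Castelnuovo--Mumford regularity of $S_{2}$ and hence has the Hilbert function $(1,4,10,10,\dots)$ of a general $10$‑point set, which is precisely non‑superabundance in degree $2$; for general $W$ one may alternatively conclude by semicontinuity from a single explicit symmetroid. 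The reductions in the first two paragraphs are formal; the real content is the input $b_{2}(\tilde{X})=11$---equivalently, that the ten rank‑$\le 2$ members of $W$ lie on no common quadric---which is genuinely a statement about the special position of the symmetroid nodes and is where the genericity hypotheses on $W$ (also used to guarantee that $X$ has only these ten nodes) are needed; I expect the cleanest write‑up to quote the Artin--Mumford Betti‑number computation and derive factoriality from it via the blow‑up as above.
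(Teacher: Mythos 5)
Your proposal is correct in substance, but it proves the lemma by a different route than the paper's main argument. The paper reduces factoriality to $\mathbb{Q}$-factoriality via Kawamata's result for Gorenstein terminal threefolds (where you instead use torsion-freeness of the local class group of an ordinary double point --- both reductions are fine), and then obtains $\mathbb{Q}$-factoriality by citing Hosono--Takagi for the $\mathbb{Q}$-factoriality of the big double cover $T_{4}\rightarrow |\mathcal{O}_{\mathbb{P}^{3}}(2)|$ together with the Ravindra--Srinivas Grothendieck--Lefschetz theorem to restrict to the general linear section $X = h^{-1}(W)$. Your second route is essentially the paper's stated \emph{alternative} proof (Clemens' defect formula for nodal quartic double solids, Endrass), and you go further by actually verifying the nonvanishing input: since $S_{2}$ is an arithmetically Cohen--Macaulay codimension-$3$ determinantal variety of degree $10$ whose ideal contains no quadrics, its $h$-vector is $(1,3,6)$, so any zero-dimensional linear section $S_{2}\cap W$ (the ten nodes) has Hilbert function $(1,4,10,10,\dots)$ and hence lies on no quadric of $W$; this gives defect zero, i.e., $\operatorname{rank}\mathrm{Cl}(X)=1$. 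Note that this argument only needs $S_{2}\cap W$ to be finite, so it is robust with respect to the genericity assumptions of Construction \ref{constr: AM modern}. What the paper's route buys is brevity (everything is quoted); what yours buys is an explicit, self-contained reason why the symmetroid nodes impose independent conditions on quadrics, which neither cited source is made to spell out here.

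One caveat: your first route, as written, is not adequately supported. Theorem \ref{thm: AM irrational} only records $\mathrm{Br}(\tilde{X})\cong\mathbb{Z}/2\mathbb{Z}$ and stable irrationality; it does not assert $b_{2}(\tilde{X})=11$, and since $b_{2}(\tilde{X})=11$ is (after your formal reductions) \emph{equivalent} to the lemma, invoking it without a precise reference to a Betti-number computation would be circular in the context of this paper. You correctly flag this and let the defect/ACM argument carry the proof, which is the right call.
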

\begin{proof}
    Since $X$ is a Gorenstein terminal threefold, the $\mathbb{Q}$-factoriality and factoriality are equivalent by \cite[Lemma 5.1]{Kawamata1988}. 
    Hence it suffices to show $X$ is $\mathbb{Q}$-factorial.
    
    By \cite[Proposition 2.5]{HosonoTakagi2015symmetroid}, the variety $T_{4}$ as in Construction \ref{constr: AM modern} is $\mathbb{Q}$-factorial. 
    One can take a sequence $T_{4} =: X^{9}\supset X^{8}\supset \dots \supset X^{4}\supset X^{3}:= X$, where $X^{i}$ is an ample divisor on $X^{i+1}$ such that $\mathrm{Sing}(X^{i}) = X^{i}\cap \mathrm{Sing}(X^{i+1})$. 
    Applying \cite[Theorem 1]{RavindraSrinivas2006Lefschetz} repeatedly, we see that $X$ remains $\mathbb{Q}$-factorial.

    Alternatively, the $\mathbb{Q}$-factoriality of $X$ also follows from \cite{Clemens1983} and \cite[Lemma 3.5]{Endrass1999}.
\end{proof}
Lemma \ref{lem: AM factorial} implies that the blow-up threefold $\tilde{X}$ of an AM double solid $X$ at ten nodes has Picard rank $11$ and all of the $10$ exceptional divisors $E_{1},\dots, E_{10}$ are of type E3 in the sense of \cite[Theorem 1.32]{KollarMori1998} and \cite{Cutkosky1988}, i.e., $(E_{i}, -K_{\tilde{X}}|_{E_{i}}) \cong (\mathbb{P}^{1}\times \mathbb{P}^{1}, \mathcal{O}(1,1))$.

\begin{rem}\label{rem: AM not weak Fano}
    The smooth threefold $\tilde{X}$ is not weak Fano.
    Indeed, 
    \[(-K_{\tilde{X}})^{3} = (-\phi^{*}K_{X})^{3} - \sum_{i=1}^{10}E_{i}^{3} = 16 - 20 = -4 < 0. \] 
    Hence we consider Geometric Manin's Conjecture for AM double solids $X$, instead for $\tilde{X}$. 
\end{rem}

\subsection{Algebraic equivalence and numerical equivalence}\label{subsection: alg equiv vs num equiv}
We use the same notation as in Subsection \ref{subsection: AM constr}. 

For $\tilde{X}$, we have 
\[
    \mathrm{Pic}(\tilde{X}) \cong \mathbb{Z}\tilde{H} \oplus \sum_{i=1}^{10}\mathbb{Z}E_{i},
\]
where $\tilde{H}$ is the pull back of the fundamental divisor $H$ of $X$ and each $E_{i}\cong \mathbb{P}^{1}\times \mathbb{P}^{1}$ is the exceptional divisor over $p_{i}$. 

Let $N_{1}(\tilde{X})$ be the set of numerical classes of $1$-cycles on $\tilde{X}$. 
One can write $N_{1}(\tilde{X})$ using the dual basis as 
\[
    N_{1}(\tilde{X}) = \mathbb{R}\tilde{\ell} \oplus \sum_{i = 1}^{10}\mathbb{R}e_{i}, 
\]
where $\tilde{\ell}$ is the numerical class of strict transforms of $H$-lines which do not pass through any node of $X$, and $e_{i}$ is the numerical class of the push forward of lines on $\mathbb{P}^{1}\times \mathbb{P}^{1}$ via the inclusion $\mathbb{P}^{1}\times \mathbb{P}^{1}\cong E_{i}\hookrightarrow \tilde{X}$. 
For $i\neq j$, let $\tilde{\ell}_{i} = \tilde{\ell} - e_{i}$ (resp.\ $\tilde{\ell}_{i,j} = \tilde{\ell} - e_{i} - e_{j}$) be the numerical class of strict transforms of $H$-lines through $p_{i}$ (resp.\ $p_{i}$ and $p_{j}$). 

\begin{lem}\label{lem: AM alg equiv ve num equiv}
    Notation as above.
    \begin{enumerate}
        \item For any $i$, the generators $e_{i}^{+}$ and $e_{i}^{-} = \tilde{\iota}(e_{i}^{+})$ of the rulings of $E_{i}\cong \mathbb{P}^{1}\times \mathbb{P}^{1}$ are in the same numerical class $[e_{i}]$ but not algebraically equivalent.

        \item For any $i \neq j$, the two irreducible curves $\tilde{\ell}_{i,j}^{+}$ and $\tilde{\ell}_{i,j}^{-} = \tilde{\iota}(\tilde{\ell}_{i,j}^{+})$ in the numerical class $[\tilde{\ell}_{i,j}]$ are not algebraically equivalent. 

        \item For any $i$, the space $\overline{M}_{0,0}(\tilde{X}, \tilde{\ell}_{i})$ consists of a disjoint union of two elliptic curves $\tilde{\Gamma}_{i}^{+}$ and $\tilde{\Gamma}_{i}^{-}$.

        \item Let $\tilde{\ell}^{+} \in [\tilde{\ell}]$ be an irreducible curve. 
        Then $\tilde{\ell}^{+}$ and its conjugate $\tilde{\ell}^{-} = \tilde{\iota}(\tilde{\ell}^{+})$ are not algebraically equivalent.
    \end{enumerate}
\end{lem}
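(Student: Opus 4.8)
The backbone of the proof is the single structural fact that
\[
    \Ker\big(B_1(\tilde X)_{\mathbb{Z}}\longrightarrow N_1(\tilde X)_{\mathbb{Z}}\big)\cong \mathbb{Z}/2\mathbb{Z},
\]
which follows from Theorem \ref{thm: AM irrational} together with the equality recalled in Remark \ref{rem: nr Br}: write $\tau$ for its unique non-trivial element, so $2\tau=0$ and two irreducible curves in a common numerical class fail to be algebraically equivalent exactly when the difference of their $B_1$-classes equals $\tau$. Since each $p_i$ lies on the fixed locus of $\iota$, the involution $\tilde\iota$ preserves each $E_i$ and restricts on $E_i\cong\mathbb{P}^1\times\mathbb{P}^1$ to an involution exchanging the two rulings (concretely, $E_i\to\mathbb{P}^2$ is the double cover branched over the smooth conic that is the projectivized tangent cone of the node $q_i$). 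Hence $\tilde\iota$ acts trivially on $N_1(\tilde X)_{\mathbb{Z}}$, and for any curve $C$ the class $[C]-[\tilde\iota C]$ lands in the kernel above, so it is $0$ or $\tau$. For (1), (2) and (4) the heart of the matter is to certify that this difference is $\tau$; (3) needs in addition a geometric analysis of the corresponding moduli space.

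For (1), the numerical equality $[e_i^{+}]=[e_i^{-}]=e_i$ is immediate from the fact that $E_i$ is of type E3 and from the previous paragraph. The non-vanishing of $e_i^{+}-e_i^{-}$ in $B_1(\tilde X)_{\mathbb{Z}}$ is the concrete form of Artin--Mumford's computation that the generator of $\mathrm{Br}(\tilde X)$ is detected by the rulings of the exceptional quadrics over the nodes: I would either quote it from \cite{ArtinMumford1972} (or extract it from the symmetroid analysis underlying \cite{HosonoTakagi2015symmetroid}), or reprove it by a local linking-number/monodromy computation near the nodes, for which the $\mathbb{P}^1$-fibration $\Sigma_W\to X$ of Construction \ref{constr: AM modern} --- whose fibre degenerates over each $p_i$ into the families of lines on the two planes of the associated rank-$2$ quadric --- provides a convenient model. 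The local picture at each node is the same, so the argument is uniform in $i$; combined with $\lvert\Ker(B_1\to N_1)\rvert=2$ it gives $e_i^{+}-e_i^{-}=\tau$ for all $i$ simultaneously. \textbf{This non-vanishing, and the parallel one for the line class below, is the main obstacle; the remaining arguments are bookkeeping with the involution.}

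For (3), I would describe the family of $H$-lines through $p_i$ via the degree-$2$ linear projection $\mathcal D_W\dashrightarrow\mathbb{P}^2$ from the node $Q_i\in W$: a line $L\ni Q_i$ lifts to a pair of $H$-lines precisely when $f^{-1}(L)$ splits, i.e.\ when $L$ is tangent to $\mathcal D_W$ at a further point, so the base of this family is the branch sextic of that projection, which acquires a node at the image $[\,\overline{Q_iQ_j}\,]$ of each of the $9$ remaining nodes $Q_j$ and hence has geometric genus $1$. The forgetful morphism from $\overline{M}_{0,0}(\tilde X,\tilde\ell_i)$ to the normalization of that sextic is a double cover, and it is the trivial one because the fibres $f^{-1}(L)$ stay disconnected along the whole branch sextic, so their two components are not interchanged by monodromy; thus $\overline{M}_{0,0}(\tilde X,\tilde\ell_i)=\tilde\Gamma_i^{+}\sqcup\tilde\Gamma_i^{-}$ with each $\tilde\Gamma_i^{\pm}$ a smooth genus-$1$ curve, the boundary points being the stable maps obtained by attaching a ruling of some $E_j$ to a line of class $\tilde\ell_{i,j}$. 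The count is consistent with Lemma \ref{lem: Gt exp dim}, as $-K_{\tilde X}\cdot\tilde\ell_i=1$ forces $\dim\overline{M}_{0,0}(\tilde X,\tilde\ell_i)\ge 1$, and one rules out rational or higher-genus components.

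For (2) and (4), I would exploit the $\tilde\iota$-equivariant degenerations already used in (3): a line of class $\tilde\ell$ breaks into a line of class $\tilde\ell_i$ glued to a ruling of $E_i$ (when its base line moves onto the branch sextic of $Q_i$), a line of class $\tilde\ell_i$ breaks into a line of class $\tilde\ell_{i,j}$ glued to a ruling of $E_j$ (at a node of that sextic), and --- composing --- a line of class $\tilde\ell$ breaks into a $\tilde\ell_{i,j}$-line glued to rulings of $E_i$ and $E_j$. Subtracting conjugates and inserting $e_k^{+}-e_k^{-}=\tau$ gives, in $\Ker(B_1\to N_1)\cong\mathbb{Z}/2\mathbb{Z}$, the identities $[\tilde\Gamma_i^{+}]-[\tilde\Gamma_i^{-}]=\big([\tilde\ell^{+}]-[\tilde\ell^{-}]\big)+\tau$ and $[\tilde\ell_{i,j}^{+}]-[\tilde\ell_{i,j}^{-}]=[\tilde\ell^{+}]-[\tilde\ell^{-}]$, so everything reduces to the single further statement that an $H$-line and its conjugate are not algebraically equivalent, i.e.\ $[\tilde\ell^{+}]-[\tilde\ell^{-}]=\tau$. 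I would establish this last non-vanishing by the same linking computation as in (1), now applied to a conjugate pair of $H$-lines; the same conclusion can afterwards be read off the net-of-quadrics/Reye-congruence picture of Subsection \ref{subsection: line vs Rey cong}, where the two families of $H$-lines record the two rulings carried by a general member of the pencil of quadrics they span, which cannot be separated over the whole family precisely because the Brauer class is non-trivial. Feeding $[\tilde\ell^{+}]-[\tilde\ell^{-}]=\tau$ back into the identities yields (2) for all $i\neq j$ and (4), and shows along the way that $\tilde\Gamma_i^{+}\sim_{\mathrm{alg}}\tilde\Gamma_i^{-}$, so the two elliptic components in (3) carry the same algebraic class --- as they must, since $\lvert\mathrm{Br}(\tilde X)\rvert=2$.
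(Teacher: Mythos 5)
Your reduction to the group $\Ker(B_1(\tilde X)_{\mathbb{Z}}\to N_1(\tilde X)_{\mathbb{Z}})\cong\mathbb{Z}/2\mathbb{Z}$ and the observation that $\tilde\iota$ acts trivially on $N_1$ are fine, but the two statements you yourself flag as ``the main obstacle'' --- $e_i^{+}\not\sim_{\mathrm{alg}}e_i^{-}$ and $\tilde\ell^{+}\not\sim_{\mathrm{alg}}\tilde\ell^{-}$ --- are never actually proved: you propose to quote them from \cite{ArtinMumford1972} or to do a ``local linking-number/monodromy computation near the nodes.'' A purely local computation at a node cannot decide algebraic equivalence, which is a global relation; and the statement that is actually available in Artin--Mumford is a different one, namely that for the conic bundle $\nu_i\colon\tilde X\to\mathbb{P}^2$ obtained by projecting from a node, the line components lying over the two components $\delta_i^{+}$ and $\delta_i^{-}$ of the discriminant are not algebraically equivalent (while the two components of any single degenerate fibre are). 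The paper's proof consists precisely in converting that fact into your two non-vanishings, by specializing the two families of lines to a common intersection point $q_j\in\delta_i^{+}\cap\delta_i^{-}$, which yields $\tilde\ell_i^{+}\rightsquigarrow\tilde\ell_{i,j}^{+}+e_j^{+}$ and $\tilde\ell_i^{-}\rightsquigarrow\tilde\ell_{i,j}^{+}+e_j^{-}$, hence $e_j^{+}\not\sim_{\mathrm{alg}}e_j^{-}$, then (2), and then (4) via $\tilde\ell=\tilde\ell_{i,j}+e_i+e_j$. Without some such derivation your argument for (1), (2), (4) is circular bookkeeping resting on the very facts to be established.

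Your treatment of (3) is also based on an incorrect geometric picture. For a quartic symmetroid the branch sextic of the projection from a node is \emph{not} an irreducible $9$-nodal sextic of geometric genus $1$: it is the union of two smooth cubics $\delta_i^{+}\cup\delta_i^{-}$ meeting transversally at the images of the other nine nodes (this reducibility, cited in the paper from Cayley and Ottem et al., is exactly the classical feature of symmetroids that makes the whole lemma work). Moreover your claim that the double cover of the discriminant by the moduli of lines is trivial ``because the fibres stay disconnected, so their two components are not interchanged by monodromy'' is a non sequitur, and it is false here: each $\tilde\Gamma_i^{\pm}$ is a \emph{connected} \'etale double cover of $\delta_i^{\pm}$ (were the covers trivial, $\overline{M}_{0,0}(\tilde X,\tilde\ell_i)$ would have four components), which is also why the two components of a single degenerate fibre are algebraically equivalent. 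A symptom of the wrong picture is your final conclusion that the two elliptic components of (3) carry the same algebraic class: with the correct labelling they parametrize the two distinct algebraic classes, and that distinction is the engine of the paper's proof of (1), (2) and (4).
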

\begin{proof}
    For each $i\in \{1,\dots, 10\}$, the projection from $q_{i}\in W\cong \mathbb{P}^{3}$ induces a dominant morphism $\nu_{i}\colon \tilde{X}\rightarrow \mathbb{P}^{2}$ such that the restriction of $\nu_{i}$ to the complement of $\bigcup_{j\neq i}E_{j}$ is a conic bundle. 
    By the second condition for the linear web $W$, we see that the quartic surface $\mathcal{D}_{W}$ does not contain any lines through the nodes. 
    Since the singular locus of $\mathcal{D}_{W}$ consists of ten nodes, by \cite[Theorem 1.2]{Ottemetal2015quarticspectrahedra} (cf. \cite[p.139]{Cayley1869/71quartic}), the discriminant locus of $\nu_{i}$ is the union of two elliptic curves $\delta_{i}^{+} \cup \delta_{i}^{-}$. 
    Then, by \cite[pp.81--83]{ArtinMumford1972}, for each point $q^{*}\in \delta_{i}^{*}\setminus (\delta_{i}^{+}\cap \delta_{i}^{-})\ (*\in \{+,-\})$, the components $\tilde{\ell}_{i}^{*}$ and $\tilde{\iota}(\tilde{\ell}_{i}^{*})$ of $\nu_{i}^{-1}(q^{*})$ are algebraically equivalent, but $\tilde{\ell}_{i}^{+}$ and $\tilde{\ell}_{i}^{-}$ are not.

    Fix a point $q_{j} := \nu_{i}(E_{j}) \in \delta_{i}^{+}\cap \delta_{i}^{-}$ for $j\in \{1,\dots,10\}$ with $j\neq i$. 
    Then there are two ways to specialize the one parameter families of $1$-cycles, i.e., 
    \[
        \tilde{\ell}_{i}^{+}\rightarrow \tilde{\ell}_{i,j}^{+} + e_{j}^{+},\quad \mbox{as}\quad \delta_{i}^{+}\ni q^{+}\rightarrow q_{j}
    \]
    and 
    \[
        \tilde{\ell}_{i}^{-}\rightarrow \tilde{\ell}_{i,j}^{+} + e_{j}^{-},\quad \mbox{as}\quad \delta_{i}^{-}\ni q^{-}\rightarrow q_{j}, 
    \]
    after replacing signs $+,-$, components $\tilde{\ell}_{i}^{+}$, $\tilde{\iota}(\tilde{\ell}_{i}^{+})$, and $\tilde{\ell}_{i}^{-}$, $\tilde{\iota}(\tilde{\ell}_{i}^{-})$ if necessary. 
    Combining the fact that $\tilde{\ell}_{i}^{+}$ and $\tilde{\ell}_{i}^{-}$ are not algebraically equivalent, we conclude that $e_{j}^{+}$ and $e_{j}^{-}$ are not algebraically equivalent, hence nor are $\tilde{\ell}_{i,j}^{+}$ and $\tilde{\ell}_{i,j}^{-}$, proving (1) and (2). 
    
    As for (3), by the discussion above, one can take the curves $\tilde{\Gamma}_{i}^{+}$ and $\tilde{\Gamma}_{i}^{-}$ to be \'{e}tale double covers of the strict transforms of $\delta_{i}^{+}$ and $\delta_{i}^{-}$ via the blow up of $\mathbb{P}^{2}$ at the intersection points $\delta_{i}^{+}\cap \delta_{i}^{-}$, i.e., one can take $\tilde{\Gamma}_{i}^{+} = D_{1}'$ and $\tilde{\Gamma}_{i}^{-} = D_{2}'$ in \cite[p.81]{ArtinMumford1972}. 
    
    The statement (4) follows from the fact $\tilde{\ell} = \tilde{\ell}_{i,j} + e_{i} + e_{j}$ for any $i\neq j$.
\end{proof}

\begin{cor}\label{cor: AM lines through nodes}
    The fiber of the evaluation map $\overline{M}_{0,1}(X, \ell)\rightarrow X$ over a node $p_{i}$ is isomorphic to a connected union of two elliptic curves $\Gamma_{i}^{+}$ and $\Gamma_{i}^{-}$. 
    In particular, the algebraic equivalence and the numerical equivalence of $1$-cycles on $X$ coincide. 
\end{cor}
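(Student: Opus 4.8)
The statement has two parts, which I would treat in turn. For the first, the plan is to realize the fibre $\mathrm{ev}^{-1}(p_i)$ of the evaluation map $\mathrm{ev}\colon\overline{M}_{0,1}(X,\ell)\to X$ as the image of $\overline{M}_{0,0}(\tilde{X},\tilde{\ell}_i)$ under the blow-down. Since $\ell$ is primitive (the finite map $f\colon X\to W$ forces $H\cdot C>0$ for every effective curve $C$, so $\ell$ is not a sum of two nonzero effective classes), every genus-$0$ stable map to $X$ of class $\ell$ has irreducible domain $\mathbb{P}^1$ and is birational onto a line: a contracted component would be unstable, carrying at most two special points. Hence, for general $X$, $\mathrm{ev}^{-1}(p_i)$ is the space of lines on $X$ through $p_i$, each marked at the point lying over $p_i$. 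Because $\phi$ is an isomorphism away from the exceptional divisors and $\tilde{\ell}_i\cdot E_i=1$, composition with $\phi$ — followed by stabilization and by marking the unique point lying over $E_i$ — defines a morphism $\Phi\colon\overline{M}_{0,0}(\tilde{X},\tilde{\ell}_i)\to\overline{M}_{0,1}(X,\ell)$ with image $\mathrm{ev}^{-1}(p_i)$. By Lemma \ref{lem: AM alg equiv ve num equiv}(3) the source is $\tilde{\Gamma}_i^+\sqcup\tilde{\Gamma}_i^-$ with both pieces elliptic, and $\Phi$ is injective on each piece since distinct lines through $p_i$ have distinct strict transforms; so $\Gamma_i^{\pm}:=\Phi(\tilde{\Gamma}_i^{\pm})$ are (copies of) those elliptic curves.

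For connectedness I would pin down where $\Phi$ glues the two pieces. By the specialization computed in the proof of Lemma \ref{lem: AM alg equiv ve num equiv}, the reducible member $\tilde{\ell}_{i,j}^+ + e_j^+$ of $\tilde{\Gamma}_i^+$ and the member $\tilde{\ell}_{i,j}^+ + e_j^-$ of $\tilde{\Gamma}_i^-$ both push forward under $\phi$ — the ruling of $E_j$ being contracted, and the resulting unstable component then stabilized away — to one and the same line through $p_i$ and $p_j$, marked at $p_i$. Such lines exist for every $j\neq i$ by Lemma \ref{lem: AM alg equiv ve num equiv}(1)--(2), so $\Gamma_i^+\cap\Gamma_i^-\neq\emptyset$ and $\mathrm{ev}^{-1}(p_i)=\Gamma_i^+\cup\Gamma_i^-$ is a connected union of two elliptic curves. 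I expect this matching step — identifying $\mathrm{ev}^{-1}(p_i)$ with the image of $\Phi$, describing the gluing locus precisely, and checking that degenerate configurations such as a line through three nodes do not occur for general $X$ (or are harmless) — to be the main point of care.

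For the second assertion I would work with the commutative square of pushforwards along $\phi$, namely $B_{1}(\tilde{X})_{\mathbb{Z}}\to N_{1}(\tilde{X})_{\mathbb{Z}}$ over $B_{1}(X)_{\mathbb{Z}}\to N_{1}(X)_{\mathbb{Z}}$. The map $\phi_*\colon B_{1}(\tilde{X})_{\mathbb{Z}}\to B_{1}(X)_{\mathbb{Z}}$ is surjective via strict transforms; $\Ker(N_{1}(\tilde{X})_{\mathbb{Z}}\to N_{1}(X)_{\mathbb{Z}})=\bigoplus_i\mathbb{Z}e_i$, with each $e_i$ lifting to $[e_i^+]\in B_{1}(\tilde{X})_{\mathbb{Z}}$, which maps to $0$ in $B_{1}(X)_{\mathbb{Z}}$ as its support is contracted to $p_i$; and, by Theorem \ref{thm: AM irrational} together with \cite{BlochSrinivas1983algcycle} and \cite{Voisin2006IntegralHodge}, $\Ker(B_{1}(\tilde{X})_{\mathbb{Z}}\to N_{1}(\tilde{X})_{\mathbb{Z}})\cong\mathbb{Z}/2\mathbb{Z}$, generated by the nonzero class $\tau:=[e_i^+]-[e_i^-]$ of Lemma \ref{lem: AM alg equiv ve num equiv}(1). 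A short diagram chase then shows that every element of $\Ker(B_{1}(X)_{\mathbb{Z}}\to N_{1}(X)_{\mathbb{Z}})$ is $\phi_*$ of an element of $\langle\tau\rangle$, hence a multiple of $\phi_*\tau$; but $\phi_*\tau=0$, either immediately since $e_i^{\pm}$ are contracted by $\phi$, or — using the first part — because $\tau=[\tilde{\ell}_i^+]-[\tilde{\ell}_i^-]$ and $\phi(\tilde{\ell}_i^+)$, $\phi(\tilde{\ell}_i^-)$ are lines parametrized by the connected curve $\Gamma_i^+\cup\Gamma_i^-$, hence algebraically equivalent on $X$. Therefore $\Ker(B_{1}(X)_{\mathbb{Z}}\to N_{1}(X)_{\mathbb{Z}})=0$, which is the claim; the only non-formal input here is the behaviour of $B_1$ and $N_1$ under blowing up the ten nodes, which is routine.
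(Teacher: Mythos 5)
Your proposal is correct and follows essentially the same route as the paper: the fiber over $p_i$ is identified with the image of $\overline{M}_{0,0}(\tilde{X},\tilde{\ell}_i)=\tilde{\Gamma}_i^{+}\sqcup\tilde{\Gamma}_i^{-}$ under pushforward along $\phi$, with the two elliptic pieces glued exactly at the stable maps $\tilde{\ell}_{i,j}^{+}+e_j^{\pm}$ lying over lines through $p_i$ and a second node. For the second assertion the paper simply cites Theorem \ref{thm: AM irrational} and Remark \ref{rem: nr Br}; your diagram chase (with $\tau=[e_i^{+}]-[e_i^{-}]$ generating the order-two kernel on $\tilde{X}$ and dying under $\phi_*$) is just a fleshed-out version of that same argument.
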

\begin{proof}
    One can see that $\Gamma_{i}^{+}$ and $\Gamma_{i}^{-}$ are the images of $\tilde{\Gamma}_{i}^{+}$ and $\tilde{\Gamma}_{i}^{-}$ by the morphism $\phi_{*}\colon \overline{M}_{0,0}(\tilde{X}, \tilde{\ell}_{i})\rightarrow \overline{M}_{0,0}(X,\ell)$ induced by $\phi\colon \tilde{X}\rightarrow X$. 
    This morphism maps $\tilde{\Gamma}_{i}^{+}$ and $\tilde{\Gamma}_{i}^{-}$ isomorphically to $\Gamma_{i}^{+}$ and $\Gamma_{i}^{-}$ respectively, but $\Gamma_{i}^{+}$ and $\Gamma_{i}^{-}$ intersect at the points corresponding to the lines through $p_{i}$ and another node. 
    This proves the first claim. 
    
    By Theorem \ref{thm: AM irrational}, Remark \ref{rem: nr Br}, and Lemma \ref{lem: AM alg equiv ve num equiv}, the unique nonzero element of $\Ker(B_{1}(\tilde{X})_{\mathbb{Z}}\rightarrow N_{1}(\tilde{X})_{\mathbb{Z}})$ is represented by a $1$-cycle $\tilde{\ell}_{i}^{+} - \tilde{\ell}_{i}^{-}$, where $\tilde{\ell}_{i}^{+} \in \tilde{\Gamma}_{i}^{+}$ and $\tilde{\ell}_{i}^{-} \in \tilde{\Gamma}_{i}^{-}$. 
    By the first claim, the push forwards $\phi_{*}\tilde{\ell}_{i}^{+}$ and $\phi_{*}\tilde{\ell}_{i}^{-}$ are algebraic equivalent, which concludes the second claim. 
\end{proof}

\section{Rational curves on AM double solids}\label{section: rational curves on AM}
\subsection{Lines on AM double solids and Reye congruences}\label{subsection: line vs Rey cong}
In this subsection, we discuss relations between the spaces of lines on AM double solids, the spaces of bitangent lines, and Reye congruences. 
We use the notation as in Section \ref{section: AM} freely. 

Let $\ell$ be a line on an AM double solid $X$, i.e., a curve with $H\cdot \ell = 1$. 
Then the image of $\ell$ by $f\colon X\rightarrow W \cong \mathbb{P}^{3}$ is a line by the projection formula. 
Conversely, if $L\subset W$ is a line, then the inverse image is a union of two lines if and only if $L$ is bitangent to the quartic surface $\mathcal{D}_{W}\subset W$: otherwise, the inverse image is an irreducible curve of $H$-degree $2$.
Therefore, $f$ induces a finite morphism $f_{*}\colon \overline{M}_{0,0}(X,1)\rightarrow \mathrm{Bit}(\mathcal{D}_{W})$ of degree $2$, where $\mathrm{Bit}(\mathcal{D}_{W})\subset \mathbb{G}(1,W)$ denotes the space of lines on $W$ bitangent to $\mathcal{D}_{W}$. 

Now, we define the \textit{Reye congruence} associated with $W$: 
\begin{dfn}\label{dfn: Reye congruence}
    For a linear web $W\subset |\mathcal{O}_{\mathbb{P}^{3}}(2)|$ as in Construction \ref{constr: AM modern}, we define the \textit{Reye congruence} associated with $W$ by 
    \[
        \mathrm{Rey}(W) := \{\mathfrak{l} \in \mathbb{G}(1,\mathbb{P}^{3})\mid \mathfrak{l}\subset \mathrm{Bs}(L) \text{ for some linear pencil }L\in \mathbb{G}(1,W)\}, 
    \]
    where $\mathrm{Bs}(L)$ denotes the base locus of a pencil $L\subset W$.
    We call an element $\mathfrak{l}\in \mathrm{Rey}(W)$ a \textit{Reye line}.
\end{dfn}
Note that for a Reye line $\mathfrak{l}$, the pencil $L$ whose base locus contains $\mathfrak{l}$ is uniquely determined since $W$ is base point free. 

From now on, we assume moreover that $W$ is \textit{excellent} in the sense of \cite{Cossec1983Reye}, or \cite[Definition 7.2.8]{DolgachevKondo2024EnriquesII}. It is equivalent to the condition that $\mathcal{D}_{W}\subset W$ does not contain lines. 
A general linear web $W$ satisfies this condition as well. 

\begin{thm}[e.g., \cite{DolgachevKondo2024EnriquesII}, Section 7.4]\label{thm: Reye Bit normalization}
    Assume $W$ is excellent. 
    Then the Reye congruence $\mathrm{Rey}(W)$ is an Enriques surface. 
    Moreover, we have the normalization morphism 
    \[
        \nu\colon \mathrm{Rey}(W)\rightarrow \mathrm{Bit}(\mathcal{D}_{W})
    \]
    which sends a Reye line $\mathfrak{l}\in \mathbb{G}(1,\mathbb{P}^{3})$ to the unique linear pencil $L\in \mathbb{G}(1,W)$ such that $\mathfrak{l}\subset \mathrm{Bs}(L)$. 
\end{thm}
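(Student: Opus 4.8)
The plan is to deduce the Enriques-surface statement from the classical theory of Reye congruences and to verify the identification of $\mathrm{Rey}(W)$ with the normalization of $\mathrm{Bit}(\mathcal{D}_W)$ by a direct analysis of the map $\nu$. For the first assertion I would recall the standard description: $\mathrm{Rey}(W)$ is the locus in $\mathbb{G}(1,\mathbb{P}^3)$ over which the bundle map $W\otimes\mathcal{O}_{\mathbb{G}}\to \operatorname{Sym}^2\mathcal{S}^{\vee}$, given by restricting a quadratic form to the tautological line ($\mathcal{S}$ the rank-$2$ tautological subbundle), drops rank to $\le 2$. For $W$ excellent this degeneracy locus has the expected dimension $2$ and is smooth; moreover it carries a connected étale double cover by the Cayley K3 surface attached to $W$, so $K_{\mathrm{Rey}(W)}$ is a nontrivial $2$-torsion class and $\chi(\mathcal{O}_{\mathrm{Rey}(W)})=1$, i.e.\ $\mathrm{Rey}(W)$ is an Enriques surface. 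This is precisely \cite[Section 7.4]{DolgachevKondo2024EnriquesII} (see also \cite{Cossec1983Reye}), and I would simply cite it; excellence of $W$ (equivalently, $\mathcal{D}_W$ contains no line) is exactly what rules out the degenerations that would destroy smoothness.

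Next I would study $\nu$. Well-definedness is already recorded after Definition~\ref{dfn: Reye congruence}: since $W$ is base point free, a Reye line $\mathfrak{l}$ lies in the base locus of a unique pencil $L$, and the assignment $\mathfrak{l}\mapsto L$ is a morphism $\mathrm{Rey}(W)\to \mathbb{G}(1,W)$. The crucial point is that its image lies in $\mathrm{Bit}(\mathcal{D}_W)$. Choose coordinates so that $\mathfrak{l}=\{x_2=x_3=0\}$. Every quadric containing $\mathfrak{l}$ has symmetric matrix of block form $\left(\begin{smallmatrix} 0 & B\\ B^{t} & D\end{smallmatrix}\right)$ with vanishing upper-left $2\times 2$ block, and one computes $\det\!\left(\begin{smallmatrix} 0 & B\\ B^{t} & D\end{smallmatrix}\right)=(\det B)^{2}$. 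Restricting to the pencil $L\cong\mathbb{P}^1$, the block $B$ is linear in the parameter, so $\det B$ is a binary quadratic form; hence the scheme $L\cap\mathcal{D}_W$, cut out by the symmetroid equation $\det(\cdot)=0$, equals twice the divisor $\{\det B=0\}$, a divisor of shape $2p+2q$. Thus $L$ is bitangent to $\mathcal{D}_W$, so $\nu$ factors through $\mathrm{Bit}(\mathcal{D}_W)$.

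It then remains to prove that $\nu\colon\mathrm{Rey}(W)\to\mathrm{Bit}(\mathcal{D}_W)$ is finite and birational, after which it is the normalization because $\mathrm{Rey}(W)$ is smooth, hence normal. Finiteness: $\mathrm{Rey}(W)$ is projective and $\nu$ has finite fibers, since for general $W$ a base locus $\mathrm{Bs}(L)$ is a curve of degree $4$ and therefore contains only finitely many lines, and a projective morphism with finite fibers is finite. Birationality: the incidence variety $\{(\mathfrak{l},L):\mathfrak{l}\subset\mathrm{Bs}(L)\}$ is isomorphic to $\mathrm{Rey}(W)$ (the pencil $L$ being unique), hence $2$-dimensional, so its image in $\mathbb{G}(1,W)$ is $2$-dimensional; this image is contained in the irreducible surface $\mathrm{Bit}(\mathcal{D}_W)$ and therefore equals it, so $\nu$ is surjective. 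Generic injectivity follows because $L$ is determined by $\mathfrak{l}$, while for general $W$ a general Reye pencil contains a unique line in its base locus. This completes the argument.

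The main obstacle is the claim that $\mathrm{Rey}(W)$ is a smooth Enriques surface, but this is classical and I would cite it rather than reprove it. Among the steps I would actually carry out, the delicate ones are the genericity assertions underlying birationality — that $\mathrm{Bit}(\mathcal{D}_W)$ is irreducible of dimension $2$, that a general Reye pencil has a unique line in its base locus, and that for general $W$ no pencil $L$ occurring here has a base locus failing to be a pure one-dimensional complete intersection; each of these is a routine but somewhat careful study of pencils of quadrics through a line, for which the hypotheses imposed on $W$ in Construction~\ref{constr: AM modern} are exactly tailored.
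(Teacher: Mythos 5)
The paper does not actually prove this statement: it is quoted wholesale from the literature (the bracketed citation to Dolgachev--Kond\=o, Section 7.4, going back to Cossec's work on Reye congruences), so any argument you supply is by construction a different route. The parts of your sketch that you carry out are the standard ones and are correct: the block-determinant identity $\det\left(\begin{smallmatrix}0 & B\\ B^{t} & D\end{smallmatrix}\right)=(\det B)^{2}$ shows that $\det$ restricted to a Reye pencil is the square of a binary quadratic (so the image of $\nu$ lands in $\mathrm{Bit}(\mathcal{D}_{W})$, also in the paper's ``splitting'' sense, since excellence rules out $L\subset\mathcal{D}_{W}$); finiteness holds because no pencil of an excellent web has two-dimensional base locus and a degree-four base curve contains finitely many lines; and generic injectivity holds because for a general Reye line the residual curve is an irreducible twisted cubic.

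The one step your plan understates is surjectivity. You propose to deduce it from a dimension count plus the irreducibility of $\mathrm{Bit}(\mathcal{D}_{W})$, which you file under routine genericity. For a ten-nodal quartic symmetroid this irreducibility is not a general-position fact one can quote for free: a priori the bitangent congruence could contain two-dimensional pieces unrelated to Reye pencils, and the natural proof of its irreducibility is essentially the statement you are trying to establish, so the logic risks circularity (your dimension count only shows the image of $\nu$ is \emph{some} two-dimensional closed subset of $\mathrm{Bit}(\mathcal{D}_{W})$). The cleaner, classical route makes irreducibility an output rather than an input: show directly that every pencil $L\subset W$ with $\det|_{L}$ a perfect square has a line in its base locus. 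A double root of $\det|_{L}$ at a rank-$3$ member forces the vertex of that cone onto the base curve; two double roots then make the arithmetic-genus-one $(2,2)$ base curve singular at two points, hence reducible, and the only reducible type with no line component --- a pair of smooth conics --- forces the rank-$2$ quadric spanned by their planes into the pencil, i.e.\ the pencil passes through a node $q_{i}$, where the same vertex-on-base argument (a cone whose vertex lies on a smooth conic spanning $P_{i}$ would contain the whole plane $P_{i}$) degenerates one of the two residual conics. This gives surjectivity of $\nu$ onto the closure of the honest bitangents, with irreducibility of $\mathrm{Bit}(\mathcal{D}_{W})$ as a corollary; alternatively, simply citing Cossec or Dolgachev--Kond\=o for the whole theorem, as the paper does, is of course legitimate.
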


Combining Theorem \ref{thm: Reye Bit normalization} and a small observation, we can describe the irreducible components of $\overline{M}_{0,0}(X, 1)$: 
\begin{thm}\label{thm: AM space of lines}
    Let $X$ be the AM double solid associated with an excellent web $W$. 
    Then the space $\overline{M}_{0,0}(X,1)$ of lines on $X$ consists of two irreducible components $M_{1}^{+}$ and $M_{1}^{-}$. 
    These components are conjugate by the involution $\iota_{*}$ induced by $\iota\colon X\rightarrow X$. 
    Moreover, there is a natural lift of the normalization map $\nu$ to $\overline{M}_{0,0}(X, 1)$: 
\[\begin{tikzcd}
	& {M_{1}^{+}} & {\overline{M}_{0,0}(X, 1)} \\
	{\mathrm{Rey}(W)} && {\mathrm{Bit}(\mathcal{D}_{W}).}
	\arrow[hook, from=1-2, to=1-3]
	\arrow["{f_{*}}", from=1-3, to=2-3]
	\arrow["{\bar{\nu}}", from=2-1, to=1-2]
	\arrow["\nu", from=2-1, to=2-3]
\end{tikzcd}\]
\end{thm}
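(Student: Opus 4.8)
The plan is to analyze the degree-$2$ finite morphism $f_{*}\colon \overline{M}_{0,0}(X,1)\rightarrow \mathrm{Bit}(\mathcal{D}_{W})$ and transport the known geometry of the Reye congruence $\mathrm{Rey}(W)$ through it. First I would recall from Theorem \ref{thm: Reye Bit normalization} that $\nu\colon \mathrm{Rey}(W)\rightarrow \mathrm{Bit}(\mathcal{D}_{W})$ is the normalization of an Enriques surface, so in particular $\mathrm{Bit}(\mathcal{D}_{W})$ is an irreducible surface. Then $\overline{M}_{0,0}(X,1)$, being finite of degree $2$ over an irreducible surface, has either one or two irreducible components, each of dimension $2$ (which also matches the expected dimension: $-K_{X}\cdot\ell + \dim X - 3 = 2 + 3 - 3 = 2$). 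To decide which, I would exhibit the section of $f_{*}$ over the dense open locus $\mathrm{Bit}(\mathcal{D}_{W})^{\circ}$ of bitangent lines $L$ disjoint from the nodes of $\mathcal{D}_{W}$: over such $L$ the inverse image $f^{-1}(L)\subset X$ splits as a disjoint union $\ell\sqcup\iota(\ell)$ of two distinct lines, so $f_{*}^{-1}(L)$ consists of two reduced points swapped by $\iota_{*}$. Thus $f_{*}$ is étale of degree $2$ over $\mathrm{Bit}(\mathcal{D}_{W})^{\circ}$, and the number of components of $\overline{M}_{0,0}(X,1)$ equals $2$ precisely when this étale double cover is disconnected, i.e.\ trivial over $\mathrm{Bit}(\mathcal{D}_{W})^{\circ}$.

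The key step is therefore to show this étale double cover is split. Here I would use the description of algebraic versus numerical equivalence of $1$-cycles established in Subsection \ref{subsection: alg equiv vs num equiv}: by Corollary \ref{cor: AM lines through nodes} (and Lemma \ref{lem: AM alg equiv ve num equiv}(4)), the two conjugate irreducible curves in the class $[\tilde\ell]$ on $\tilde X$ — equivalently, the strict transforms of a general line on $X$ and its $\iota$-conjugate — are \emph{not} algebraically equivalent. Consequently the class $[f]\in B_{1}(X)_{\mathbb{Z}}$ of a line and the class $[\iota\circ f]$ of its conjugate are distinct elements of $B_{1}(X)_{\mathbb{Z}}$, and since $B_{1}$-class is locally constant in families, it defines a locally constant function on $\overline{M}_{0,0}(X,1)$ taking two distinct values that are exchanged by $\iota_{*}$. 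Hence $\overline{M}_{0,0}(X,1)$ has (at least, and therefore exactly) two irreducible components $M_{1}^{+}$, $M_{1}^{-}$, interchanged by $\iota_{*}$; a point to check is that each value is actually attained on a single component, which follows because each component dominates the irreducible surface $\mathrm{Bit}(\mathcal{D}_{W})$ and $f_{*}$ restricted to it is birational (an isomorphism over $\mathrm{Bit}(\mathcal{D}_{W})^{\circ}$).

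Finally, for the lift $\bar\nu$: composing the normalization $\nu\colon\mathrm{Rey}(W)\rightarrow\mathrm{Bit}(\mathcal{D}_{W})$ with a choice of component, say $M_{1}^{+}\xrightarrow{\ f_{*}\ }\mathrm{Bit}(\mathcal{D}_{W})$ which is birational, I want a morphism $\bar\nu\colon\mathrm{Rey}(W)\rightarrow M_{1}^{+}$ with $f_{*}\circ\bar\nu=\nu$. Since $\mathrm{Rey}(W)$ is a smooth (hence normal) surface and $f_{*}|_{M_{1}^{+}}$ is finite and birational onto the normal... no: $\mathrm{Bit}(\mathcal{D}_{W})$ need not be normal, which is exactly why $\nu$ is a normalization. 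So instead I would factor: $f_{*}|_{M_{1}^{+}}$ is finite birational, hence factors through the normalization $\mathrm{Bit}(\mathcal{D}_{W})^{\nu}$, but the universal property of normalization applied to the dominant map $M_{1}^{+}\rightarrow\mathrm{Bit}(\mathcal{D}_{W})$ from the normal variety $M_{1}^{+}$ (note $M_{1}^{+}$ is normal because $\mathrm{Mor}$ is smooth at free curves and a general line is very free, cf. later results — or more safely, argue $M_{1}^{+}\cong \mathrm{Rey}(W)$ directly) yields $M_{1}^{+}\xrightarrow{\sim}\mathrm{Rey}(W)$, and $\bar\nu$ is the inverse. The main obstacle is pinning down this last normality/identification cleanly — i.e.\ checking $M_{1}^{+}$ is smooth (or at least normal) so that it \emph{is} the normalization of $\mathrm{Bit}(\mathcal{D}_{W})$ rather than some intermediate cover; this is where I expect the real work, and it likely uses that a general line on $X$ is free (Lemma \ref{lem: Gt exp dim}, Lemma \ref{lem: free locus}) together with the étale-over-$\mathrm{Bit}(\mathcal{D}_{W})^{\circ}$ observation to conclude $M_{1}^{+}$ and $\mathrm{Rey}(W)$ are two normal surfaces birational and finite over the same target, with the same branch behavior, hence isomorphic.
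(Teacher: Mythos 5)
The central step of your argument---separating the two sheets of $f_{*}$ over $\mathrm{Bit}(\mathcal{D}_{W})^{\circ}$ by the algebraic-equivalence class of the cycle on $X$---is wrong as stated. Corollary \ref{cor: AM lines through nodes}, which you cite in its support, asserts the \emph{opposite} of what you use: on $X$ itself algebraic and numerical equivalence of $1$-cycles coincide, so a line and its $\iota$-conjugate define the \emph{same} class in $B_{1}(X)_{\mathbb{Z}}$. Indeed $M_{1}^{+}$ and $M_{1}^{-}$ are not disjoint---they meet along lines through the nodes (cf.\ Remark \ref{rem: spaces of lines on AM non-normal} and the proof of Corollary \ref{cor: AM lines through nodes})---so no cycle-theoretic invariant on $X$ can be a locally constant function on all of $\overline{M}_{0,0}(X,1)$ separating them. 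The statement you actually need, Lemma \ref{lem: AM alg equiv ve num equiv}(4), lives on the resolution $\tilde{X}$: to use it you must restrict to the locus of bitangents avoiding the ten nodes, lift the universal family of lines to $\tilde{X}$ over that locus, and argue that the $B_{1}(\tilde{X})$-class of the strict transform is locally constant and takes two values exchanged by $\tilde{\iota}$. This repair is possible and not circular (Lemma \ref{lem: AM alg equiv ve num equiv} is independent of the theorem), but the $X$-versus-$\tilde{X}$ distinction, together with deleting the node locus where the components intersect, is precisely the delicate point, and your write-up gets it backwards.

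The construction of the lift $\bar{\nu}$ is also based on a false expectation. You propose to show $M_{1}^{+}$ is normal, so that by the universal property of normalization $M_{1}^{+}\cong \mathrm{Rey}(W)$ and $\bar{\nu}$ is the inverse isomorphism. But $M_{1}^{+}$ is \emph{not} normal and $\bar{\nu}$ is not an isomorphism: over a bitangent line $L_{ij}$ through two nodes, $\nu^{-1}(L_{ij})$ consists of four Reye lines while only two lines of $M_{1}^{+}$ lie over $L_{ij}$, so $\bar{\nu}$ is two-to-one there (Remark \ref{rem: spaces of lines on AM non-normal}); smoothness of the moduli space at free curves is of no help at lines through nodes, which are not even contained in the smooth locus of $X$ and are exactly where the failure occurs. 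The paper instead constructs $\bar{\nu}$ directly: for a Reye line $\mathfrak{l}$ with pencil $L=\nu(\mathfrak{l})$, the curve $\{(\mathfrak{l},Q)\mid Q\in L\}\subset \Sigma_{W}=q^{-1}(W)$ is mapped by $g|_{\Sigma_{W}}\colon \Sigma_{W}\to X$ to a line, and this family over $\mathrm{Rey}(W)$ induces the morphism $\bar{\nu}$; its image is the component $M_{1}^{+}$, and the two-component statement then follows from $\deg f_{*}=2$, birationality of $\nu$, and the $\iota_{*}$-conjugation, without ever needing $M_{1}^{+}$ to be normal.
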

\begin{proof}
    We prove the existence of the lift $\bar{\nu}$. 
    Let $\mathfrak{l} \in \mathrm{Rey}(W)$ be a Reye line. 
    By construction, any quadric $Q$ in the pencil $L := \nu(\mathfrak{l)}$ contains the Reye line $\mathfrak{l}$. 
    We consider the pairs $(\mathfrak{l}, Q)$ for $Q\in L$, which are nothing but the elements of $\Sigma_{W} := q^{-1}(W)$, where $q\colon \Sigma\rightarrow |\mathcal{O}_{\mathbb{P}^{3}}(2)|$ as in Construction \ref{constr: AM modern}. 
    Hence the morphism $g|_{\Sigma_{W}}\colon \Sigma_{W} \rightarrow X$ maps the set $\{(\mathfrak{l}, Q) \mid Q\in L\}$ to a line $\ell^{+}$ on $X$. 
    Therefore, we obtain a morphism $\bar{\nu}\colon \mathrm{Rey}(W)\rightarrow \overline{M}_{0,0}(X,1)$. 
    Since $\mathrm{Rey}(W)$ is irreducible and maps surjectively to $\mathrm{Bit}(\mathcal{D}_{W})$, we see that the image of $\bar{\nu}$ is an irreducible component $M_{1}^{+}\subset \overline{M}_{0,0}(X, 1)$. 
    Moreover, since any fiber of $f_{*}$ consists of two points which are conjugate by $\iota_{*}$ while $\nu$ is birational, we see that $\overline{M}_{0,0}(X, 1)$ consists of $M_{1}^{+}$ and its conjugate $M_{1}^{-} := \iota_{*}(M_{1}^{+})$, as desired. 
\end{proof}

\begin{rem}\label{rem: spaces of lines on AM non-normal}
    The map $\bar{\nu}\colon \mathrm{Rey}(W)\rightarrow M_{1}^{+}$ is not an isomorphism: let $L_{ij}\in \mathrm{Bit}(\mathcal{D}_{W})$ be the line passing through nodes $q_{i}, q_{j}$. 
    Then both of the components of $f^{-1}(L_{ij})$ are elements of $M_{1}^{+}$ while the preimage $\nu^{-1}(L_{ij})$ consists of four Reye lines. 
    Thus $M_{1}^{+}$ and $M_{1}^{-}$ are non-normal.
\end{rem}

The following lemmas are used to describe the spaces of conics on AM double solids (Theorem \ref{thm: AM space of conics}). 
\begin{lem}\label{lem: Reye reducible div}
    Let $\mathrm{Rey}(W)\subset \mathbb{G}(1, \mathbb{P}^{3})$ be the Reye congruence associated with an excellent web $W\subset |\mathcal{O}_{\mathbb{P}^{3}}(2)|$. 
    Let $\mathcal{O}_{\mathrm{Rey}(W)}(1)$ be the restriction of the very ample line bundle $\mathcal{O}_{\mathbb{P}^{5}}(1)$ via the Pl\"{u}cker embedding $\mathrm{Rey}(W)\hookrightarrow \mathbb{G}(1,\mathbb{P}^{3})\hookrightarrow \mathbb{P}^{5}$. 
    Then any non-integral divisor of $|\mathcal{O}_{\mathrm{Rey}(W)}(1)|$ contains a prime divisor $D$ with $\dim |D|\le 1$. 
    The same statement is true for the very ample linear system $|\omega_{\mathrm{Rey}(W)}(1)|$. 
\end{lem}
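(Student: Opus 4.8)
The plan is to descend to the abstract Enriques surface $S:=\mathrm{Rey}(W)$ furnished by Theorem~\ref{thm: Reye Bit normalization} and to reduce both assertions to a single numerical statement. Write $H:=\mathcal{O}_{\mathrm{Rey}(W)}(1)$. Since $\mathrm{Rey}(W)$ is a surface of bidegree $(7,3)$ in $\mathbb{G}(1,\mathbb{P}^{3})$, and hence of degree $10$ in $\mathbb{P}^{5}$, we have $H^{2}=10$; and because $K_{S}$ is numerically trivial we likewise get $(\omega_{\mathrm{Rey}(W)}(1))^{2}=(K_{S}+H)^{2}=10$. Thus it is enough to prove: if $L$ is a divisor class on $S$ with $L^{2}=10$ and $|L|\neq\emptyset$, then every non-integral member of $|L|$ has a prime component $D$ with $\dim|D|\le 1$.

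The first step is the following claim: \emph{every prime divisor $D$ on $S$ with $\dim|D|\ge 2$ is nef and satisfies $D^{2}\ge 4$.} To see this, note first that if $\dim|D|\ge 1$ then, being prime, $D$ moves in a pencil with no fixed component, so $D\cdot E\ge 0$ for every irreducible curve $E$ and $D^{2}\ge 0$; hence $D$ is nef. Next, $D^{2}$ is even, since $K_{S}\cdot D=0$ forces the arithmetic genus $p_{a}(D)=\tfrac12 D^{2}+1$ to lie in $\mathbb{Z}$. If $D^{2}>0$, then $D$ is nef and big, so writing $\mathcal{O}_{S}(D)=\omega_{S}\otimes\mathcal{O}_{S}(D-K_{S})$ with $D-K_{S}$ nef and big, Kawamata--Viehweg vanishing gives $h^{1}(\mathcal{O}_{S}(D))=0$, and Riemann--Roch then yields $\dim|D|=\tfrac12 D^{2}$; in particular $\dim|D|\ge 2$ forces $D^{2}\ge 4$. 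If instead $D^{2}=0$, the base-point-free pencil $|D|$ exhibits $D$ as a fiber of a genus-one fibration, whence $\dim|D|\le 1$, contrary to hypothesis; this establishes the claim.

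For the main argument, suppose for contradiction that a non-integral member $C=\sum_{i}a_{i}C_{i}\in|L|$ (necessarily with $\sum_{i}a_{i}\ge 2$) has every prime component $C_{i}$ satisfying $\dim|C_{i}|\ge 2$. By the claim, each $C_{i}$ is nef and big with $C_{i}^{2}\ge 4$, so the Hodge index theorem gives $C_{i}\cdot C_{j}\ge\sqrt{C_{i}^{2}C_{j}^{2}}\ge 4$ for $i\neq j$. Hence
\[
    10 = L^{2} = \left(\sum_{i}a_{i}C_{i}\right)^{\!2} = \sum_{i}a_{i}^{2}C_{i}^{2} + 2\sum_{i<j}a_{i}a_{j}\,(C_{i}\cdot C_{j}) \;\ge\; 4\left(\sum_{i}a_{i}\right)^{\!2} \;\ge\; 16,
\]
a contradiction. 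Therefore some prime component $C_{i}$ has $\dim|C_{i}|\le 1$, and we take $D=C_{i}$. Applying this with $L=H$ proves the statement for $|\mathcal{O}_{\mathrm{Rey}(W)}(1)|$, and with $L=K_{S}+H$ for $|\omega_{\mathrm{Rey}(W)}(1)|$.

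I expect the only real work to be the claim of the second paragraph --- pinning down exactly which prime divisors have $\dim|D|\le 1$ --- for which the inputs are Riemann--Roch and Kawamata--Viehweg vanishing on Enriques surfaces, plus the elementary remark that a prime moving divisor is nef. Once $C_{i}^{2}\ge 4$ is in force for every component, the conclusion drops out of the Hodge index estimate: the polarization degree $10$ is simply too small to be written as a sum of classes of self-intersection at least $4$.
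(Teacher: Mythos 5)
Your proof is correct, and its second half genuinely diverges from the paper's. The first step is common to both: you and the paper each show, via parity of $D^{2}$ on an Enriques surface, Riemann--Roch, Serre duality (you should say a word about $h^{2}(D)=h^{0}(K_{S}-D)=0$, which is immediate since $D$ is effective and nonzero), and Kawamata--Viehweg vanishing, that a prime divisor moving in dimension $\ge 2$ is nef with $D^{2}\ge 4$; your treatment of the $D^{2}=0$ case is terse (you call $|D|$ a pencil while arguing by contradiction that $\dim|D|\ge 2$), but the underlying point --- a base-point-free system with one-dimensional image is composed with a pencil, so its integral member $D$ forces $\dim|D|\le 1$ --- is sound. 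Where you differ is the endgame. The paper only uses pairwise non-negativity of intersections to reduce to the single numerical configuration $D_{1}+D_{2}$ with $D_{1}^{2}=D_{2}^{2}=4$, $D_{1}\cdot D_{2}=1$, and then excludes it geometrically, by analyzing the degree-two map $\phi_{|D_{1}|}\colon \mathrm{Rey}(W)\dashrightarrow \mathbb{P}^{2}$ and showing $D_{2}$ would be a component of a member of $|D_{1}|$. You instead invoke the index inequality: since $C_{i}^{2}>0$, the signature $(1,\rho-1)$ of the intersection form gives $(C_{i}\cdot C_{j})^{2}\ge C_{i}^{2}C_{j}^{2}$, and $C_{i}\cdot C_{j}\ge 0$ for distinct prime components, hence $C_{i}\cdot C_{j}\ge 4$, so any non-integral member would have self-intersection at least $16>10$. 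This is valid (nefness is not even needed for it, only $C_{i}^{2}>0$ and distinctness of the components), it disposes of all configurations at once --- including the paper's residual case $(4,4,1)$ and the $C_{i}\cdot C_{j}=0$ splittings the paper silently discards --- and it makes the lemma a purely numerical statement about a degree-$10$ polarization on an Enriques surface, whereas the paper's finish trades that uniformity for an explicit geometric picture of the would-be decomposition. The reduction of the $\omega_{\mathrm{Rey}(W)}(1)$ case to the same computation via numerical triviality of $K_{S}$ matches the paper.
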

\begin{proof}
    First, we prove that for an effective divisor $D$ on $\mathrm{Rey}(W)$, $D$ satisfies $\dim |D| \le 1$ if and only if $D^{2} \le 2$ holds. 
    If $D$ is an effective divisor such that $D^{2} \le 0$, then clearly, $\dim |D| \le 1$. 
    Let $D$ be an effective divisor with $D^{2} > 0$. 
    By the Riemann-Roch theorem, we have 
    \[
        \chi(D) = \frac{1}{2}D\cdot (D - K_{\mathrm{Rey}(W)}) + \chi(0) = \frac{D^{2}}{2} + 1 
    \]
    since $\mathrm{Rey}(W)$ is an Enriques surface.
    By the Serre duality, we have $h^{2}(D) = h^{0}(K_{\mathrm{Rey}(W)} - D) \le h^{0}(-2D) = 0$. 
    Hence $h^{2}(D) = 0$. 
    Thus, we obtain that 
    \[
        \dim |D| = \frac{D^{2}}{2} + h^{1}(D) \ge 1.
    \]
    This implies $|D|$ dominates $\mathrm{Rey}(W)$, hence $D$ is nef.
    Moreover, $D$ is also big since $D^{2} > 0$. 
    Then the Kawamata-Viehweg vanishing theorem says $h^{1}(D) = 0$ since $D + K_{\mathrm{Rey}(W)}$ is also nef and big.
    Therefore, we obtain that $\dim |D| = D^{2}/2$, which implies the claim. 

    Assume that there exist non-integral divisors of $|\mathcal{O}_{\mathrm{Rey}(W)}(1)|$ which fail the lemma. 
    Since $\mathcal{O}_{\mathrm{Rey}(W)}(1)^{2} = 10$ (e.g., \cite[Theorem 7.4.4]{DolgachevKondo2024EnriquesII}), any such divisor must be the union of two prime divisors $D_{1} + D_{2}$ such that $D_{1}^{2} = D_{2}^{2} = 4$, $D_{1}\cdot D_{2} = 1$. 
    Now, consider the map $\phi := \phi_{|D_{1}|}\colon \mathrm{Rey}(W)\dashrightarrow \mathbb{P}^{2}$ defined by the linear system $|D_{1}|$. 
    Resolving the indeterminacies, we obtain the diagram 
\[\begin{tikzcd}
	S \\
	{\mathrm{Rey}(W)} & {\mathbb{P}^{2}.}
	\arrow["\psi"', from=1-1, to=2-1]
	\arrow["{\tilde{\phi}}", from=1-1, to=2-2]
	\arrow["\phi"', dashed, from=2-1, to=2-2]
\end{tikzcd}\]
    By the projection formula, we have 
    \[
        1 = D_{1}\cdot D_{2} = \psi^{-1}_{*}D_{1}\cdot \psi^{*}D_{2} = \tilde{\phi}^{*}\mathcal{O}_{\mathbb{P}^{2}}(1)\cdot \psi^{*}D_{2} = \mathcal{O}_{\mathbb{P}^{2}}(1)\cdot \tilde{\phi}_{*}\psi^{*}D_{2}. 
    \]
    Hence, $\phi_{*}(D_{2})$ is contained in a line, i.e., $D_{2}$ is an irreducible component of a divisor of $|D_{1}|$. 
    However, since $\dim |D_{1}| = \dim |D_{2}|$, this implies that a general member of $|D_{1}|$ is reducible, which contradicts the assumption. 

    Since $\mathcal{O}_{\mathrm{Rey}(W)}(1)$ and $\omega_{\mathrm{Rey}(W)}(1)$ are numerically equivalent and we only use numerical properties in this argument, the last statement also holds. 
\end{proof}

Let $\mathbb{G}(1,3)$ denote the Grassmannian of lines in $\mathbb{P}^{3}$ or $W$. 
For a line $l \in \mathbb{G}(1,3)$, let $\sigma_{l}$ be the Schubert cycle 
\[
    \sigma_{l} = \{l' \in \mathbb{G}(1,3)\mid l\cap l' \neq \emptyset\}, 
\]
which corresponds to the tangent hyperplane $T_{[l]}\mathbb{G}(1,3)$ in the Pl\"{u}cker embedding $\mathbb{G}(1,3)\hookrightarrow \mathbb{P}^{5}$. 

\begin{lem}\label{lem: union of two lines}
    Let $X$ be the AM double solid associated with an excellent web $W$. 
    Let $(M_{1}^{+})'\subset \overline{M}_{0,1}(X,1)$ (resp.\ $(M_{1}^{-})'\subset \overline{M}_{0,1}(X,1)$) be the component corresponding to $M_{1}^{+} \subset \overline{M}_{0,0}(X,1)$ (resp.\ $M_{1}^{-}\subset \overline{M}_{0,0}(X,1)$). 
    Then $(M_{1}^{+})'\times_{X} (M_{1}^{+})'$, $(M_{1}^{+})'\times_{X} (M_{1}^{-})'$, and $(M_{1}^{-})'\times_{X} (M_{1}^{-})'$ are irreducible. 
\end{lem}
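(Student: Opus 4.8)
The plan is to translate each fiber product into an incidence locus on the Enriques surface $\mathrm{Rey}(W)$ and to settle irreducibility there by a fibration argument resting on Lemma~\ref{lem: Reye reducible div}.

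First I would set up the dictionary. By Theorem~\ref{thm: AM space of lines}, $\bar\nu\colon\mathrm{Rey}(W)\to M_1^+$ and its $\iota_*$-conjugate present $M_1^{\pm}$ as quotients of the single Enriques surface $\mathrm{Rey}(W)$; a line on $X$ is the $g|_{\Sigma_W}$-image of the pencil $\{(\mathfrak l,Q)\mid Q\in\nu(\mathfrak l)\}$ carried by a Reye line $\mathfrak l$, and $f$ sends it to the pencil $\nu(\mathfrak l)\subset W$. Pulling back $g\colon\Sigma_W\to X$ along $\bar\nu$ yields finite surjections onto the universal-line components $(M_1^{\pm})'$, so each of the three fiber products receives a finite dominant map from a locus
\[
\mathcal Z^{\epsilon_1\epsilon_2}=\bigl\{(\mathfrak l_1,\mathfrak l_2)\in\mathrm{Rey}(W)\times\mathrm{Rey}(W)\ \big|\ \ell^{\epsilon_1}(\mathfrak l_1)\ \text{and}\ \ell^{\epsilon_2}(\mathfrak l_2)\ \text{meet on}\ X\bigr\},\qquad\epsilon_i\in\{+,-\}.
\]
Working out when two lines of prescribed signs meet on $X$ — this happens precisely when the pencils $\nu(\mathfrak l_1),\nu(\mathfrak l_2)$ share a common quadric $Q$ and $\mathfrak l_1,\mathfrak l_2$ sit in the ruling of $Q$ singled out by the signs, a condition one can rephrase in terms of incidence of the lines $\mathfrak l_i\in\mathbb G(1,\mathbb P^3)$ and of the pencils $\nu(\mathfrak l_i)\in\mathbb G(1,W)$ — shows that $\mathcal Z^{\epsilon_1\epsilon_2}$ is, up to a proper closed subset, cut out on $\mathrm{Rey}(W)\times\mathrm{Rey}(W)$ by conditions of Schubert type $\mathfrak l_2\in\sigma_{\mathfrak l_1}$ and/or $\nu(\mathfrak l_2)\in\sigma_{\nu(\mathfrak l_1)}$. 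It therefore suffices to prove each $\mathcal Z^{\epsilon_1\epsilon_2}$ irreducible.

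Next I would fiber $\mathcal Z^{\epsilon_1\epsilon_2}$ over the first copy of $\mathrm{Rey}(W)$. For a fixed $\mathfrak l_1$, the Schubert conditions $\mathfrak l_2\in\sigma_{\mathfrak l_1}$, $\nu(\mathfrak l_2)\in\sigma_{\nu(\mathfrak l_1)}$ pull back along $\bar\nu$, using that $\sigma_l$ is the tangent hyperplane $T_{[l]}\mathbb G(1,3)$, to members of the two very ample systems $|\mathcal O_{\mathrm{Rey}(W)}(1)|$ and $|\omega_{\mathrm{Rey}(W)}(1)|$ of the Enriques surface — the pair of half-polarizations whose appearance is exactly why Lemma~\ref{lem: Reye reducible div} is stated for both. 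Since $\mathrm{Rey}(W)$ is irreducible and neither it nor $\mathrm{Bit}(\mathcal D_W)$ is degenerate in $\mathbb P^5$, the projection $\mathcal Z^{\epsilon_1\epsilon_2}\to\mathrm{Rey}(W)$ is surjective with every irreducible component dominating the base; hence $\mathcal Z^{\epsilon_1\epsilon_2}$ is irreducible as soon as its general fiber is integral, and that general fiber is a general member of $|\mathcal O_{\mathrm{Rey}(W)}(1)|$ or $|\omega_{\mathrm{Rey}(W)}(1)|$ (intersected with a further Schubert divisor in the mixed case).

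The heart of the argument is that these general members are integral. Suppose a general fiber $D_{\mathfrak l_1}$ were non-integral; by Lemma~\ref{lem: Reye reducible div} it contains a prime divisor $D$ with $\dim|D|\le1$, equivalently $D^2\le2$. On the Enriques surface $\mathrm{Rey}(W)$ the prime divisors with $D^2\le2$ range over a countable union of families of dimension at most one ((half-)fibres of the elliptic pencils, the occasional $(-2)$-curve, and the genus-$2$ pencils), whereas the hyperplanes cutting out the $D_{\mathfrak l_1}$ trace only a two-dimensional family inside $(\mathbb P^5)^{\vee}$ as $\mathfrak l_1$ ranges over $\mathrm{Rey}(W)$; forcing such a hyperplane to contain the fixed positive-dimensional curve $\bar\nu(D)$ confines it to a linear subsystem of $(\mathbb P^5)^{\vee}$ of dimension too small to meet that two-dimensional family in positive dimension, so for general $\mathfrak l_1$ no such $D$ lies in $D_{\mathfrak l_1}$ — the finitely many exceptional shapes of $\bar\nu(D)$ (a line or a conic in $\mathbb P^5$) being handled directly using that $W$ is excellent. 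This contradiction proves each $\mathcal Z^{\epsilon_1\epsilon_2}$, hence each fiber product, irreducible. One last point: over the ten nodes of $X$ the evaluation maps $(M_1^{\pm})'\to X$ have the one-dimensional fibres of Corollary~\ref{cor: AM lines through nodes}, and there is a further proper closed subset of $X$ multiply covered by lines of a given sign; a dimension count shows these loci have dimension at most two and contribute no extra component. I expect the parameter count in this last step, together with the classification of low-degree curves on a general Reye congruence that it requires, to be the main technical obstacle.
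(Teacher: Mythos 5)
Your overall route is the same as the paper's: reduce the three fiber products to the incidence loci $M_{1,\ell}^{+}=\{\ell'\in M_{1}^{+}\mid \ell'\cap\ell\neq\emptyset\}$ and $M_{1,\iota(\ell)}^{+}$ for a general line $\ell$, pull these back to the Enriques surface $\mathrm{Rey}(W)$, identify their classes with the two ``half'' polarizations, and then invoke Lemma \ref{lem: Reye reducible div}. However, the two steps that carry the real content are not supplied. First, the class identification is asserted but not proved, and as stated it is partly backwards. Two lines of \emph{opposite} sign meet on $X$ exactly when the corresponding Reye lines meet in $\mathbb{P}^{3}$ (they lie in different rulings of the common quadric of the two pencils), so the mixed incidence divisor is the honest Schubert hyperplane section $\sigma_{\mathfrak{l}}\cap\mathrm{Rey}(W)\in|\mathcal{O}_{\mathrm{Rey}(W)}(1)|$; the \emph{same}-sign divisor corresponds to Reye lines in the same ruling, i.e.\ it is the residual of $\sigma_{\mathfrak{l}}\cap\mathrm{Rey}(W)$ inside $\nu^{*}(\sigma_{L}\cap\mathrm{Bit}(\mathcal{D}_{W}))$ and is not cut by a Schubert condition in either Grassmannian (your parenthetical ``intersected with a further Schubert divisor in the mixed case'' has this the wrong way round). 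To conclude that this residual is numerically $\mathcal{O}_{\mathrm{Rey}(W)}(1)$ --- which is precisely what makes Lemma \ref{lem: Reye reducible div} applicable --- one must know $\nu^{*}\mathcal{O}_{\mathrm{Bit}(\mathcal{D}_{W})}(1)\equiv\mathcal{O}_{\mathrm{Rey}(W)}(2)$; the paper proves this by analyzing a bitangent $L_{ij}$ through two nodes, over which $\nu$ has exactly four Reye lines, and showing that $2\nu^{*}(\sigma_{L_{ij}}\cap\mathrm{Bit}(\mathcal{D}_{W}))$ is the sum of the four corresponding Schubert sections. Nothing in your sketch supplies this computation or a substitute, and without it the degree of the same-sign divisor is unknown.

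Second, your argument that the general incidence divisor is integral does not close. A component $D$ with $\dim|D|\le 1$ need not map to a line or conic in $\mathbb{P}^{5}$ (only $\deg D\le 10$ is forced); hyperplanes containing a fixed plane curve still form a net, so the duality count in $(\mathbb{P}^{5})^{\vee}$ does not rule out a positive-dimensional family; and in the same-sign case the divisors are residuals, not hyperplane sections, so that count does not even apply --- indeed you flag this step yourself as the ``main technical obstacle'' and defer it to an unproved classification of low-degree curves on $\mathrm{Rey}(W)$. The paper's actual argument is much shorter and avoids all of this: if the general fiber were non-integral, Lemma \ref{lem: Reye reducible div} produces a component $D$ with $\dim|D|\le 1$, and the degree bound $\omega_{\mathrm{Rey}(W)}(1)\cdot D\le 10$ leaves only finitely many linear equivalence classes for $D$, hence an at most one-dimensional family of candidate curves; since $\ell$ varies in the two-dimensional $M_{1}^{+}$, a single fixed curve of $\mathrm{Rey}(W)$ would have to lie in $\bar{\nu}^{*}(M_{1,\ell}^{+})$ for a moving family of $\ell$, which is absurd. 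You would need to supply both the degree-two computation for $\nu$ and this finiteness argument (or genuine replacements) for your proposal to constitute a proof.
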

\begin{proof}
    Since the involution $\iota\colon X\rightarrow X$ induces an isomorphism $M_{1}^{+}\cong M_{1}^{-}$, it suffices to prove $(M_{1}^{+})'\times_{X} (M_{1}^{+})'$ and $(M_{1}^{+})'\times_{X} (M_{1}^{-})'$ are irreducible. 
    Moreover, it is enough to show that for general $\ell\in M_{1}^{+}$, the loci 
    \[
        M_{1,\ell}^{+} := \{\ell' \in M_{1}^{+}\mid \ell'\cap \ell \neq \emptyset\}
    \]
    and 
    \[
        M_{1,\iota(\ell)}^{+} := \{\ell' \in M_{1}^{+}\mid \ell'\cap \iota(\ell) \neq \emptyset\}
    \]
    are irreducible. 
    We consider the inverse images of $M_{1,\ell}^{+}$ and $M_{1,\iota(\ell)}^{+}$ by the morphism $\bar{\nu}\colon \mathrm{Rey}(W)\rightarrow M_{1}^{+}$ as in Theorem \ref{thm: AM space of lines}. 
    Note that the union of $M_{1,\ell}^{+}$ and $M_{1,\iota(\ell)}^{+}$ is the inverse image of $\sigma_{f(\ell)} \cap \mathrm{Bit}(\mathcal{D}_{W})$ by the morphism $M_{1}^{+}\hookrightarrow \overline{M}_{0,0}(X,1)\xrightarrow{f_{*}} \mathrm{Bit}(\mathcal{D}_{W})$. 
    Hence the union of $\bar{\nu}^{-1}(M_{1,\ell}^{+})$ and $\bar{\nu}^{-1}(M_{1,\iota(\ell)}^{+})$ equals the inverse image of the hyperplane section $\sigma_{f(\ell)}\cap \mathrm{Bit}(\mathcal{D}_{W})$ by the normalization morphism $\nu\colon \mathrm{Rey}(W)\rightarrow \mathrm{Bit}(\mathcal{D}_{W})$. 
    For a pair $i,j\in \{1,\dots, 10\}$ with $i\neq j$, consider the line $L_{ij} \in \mathrm{Bit}(\mathcal{D}_{W})$ containing the nodes $q_{i}, q_{j}$. 
    The node $q_{i}$ (resp.\ $q_{j}$) corresponds to the union of two plains $P_{i}, P'_{i}$ (resp.\ $P_{j}, P'_{j}$). 
    Then the inverse image $\nu^{-1}(L_{ij})$ consists of four Reye lines 
    \begin{align*}
        \mathfrak{l}_{ij} &= P_{i}\cap P_{j},\\
        \mathfrak{l}_{ij'} &= P_{i}\cap P'_{j},\\
        \mathfrak{l}_{i'j} &= P'_{i}\cap P_{j},\\
        \mathfrak{l}_{i'j'} &= P'_{i}\cap P'_{j}. 
    \end{align*}
    For any smooth quadric $Q \in L_{ij}$, we see that $\mathfrak{l}_{ij}, \mathfrak{l}_{i'j'}$ are in the same ruling of $Q$ and $\mathfrak{l}_{ij'}, \mathfrak{l}_{i'j}$ are in the other ruling. 
    Now we claim that $2\nu^{*}(\sigma_{L_{ij}}\cap \mathrm{Bit}(\mathcal{D}_{W}))$ is linearly equivalent to 
    \[
        \sigma_{\mathfrak{l}_{ij}}\cap \mathrm{Rey}(W) + \sigma_{\mathfrak{l}_{ij'}}\cap \mathrm{Rey}(W) + 
        \sigma_{\mathfrak{l}_{i'j}}\cap \mathrm{Rey}(W) + \sigma_{\mathfrak{l}_{i'j'}}\cap \mathrm{Rey}(W),  
    \]
    and their supports coincide. 
    Let $\mathfrak{l} \in \nu^{-1}(\sigma_{L_{ij}}\cap \mathrm{Bit}(\mathcal{D}_{W}))$.
    Then $\nu(\mathfrak{l})$ intersects $L_{ij}$ at a point $[Q] \in L_{ij}$. 
    Since $Q$ is a smooth quadric or a union of two planes, we see that $\mathfrak{l}$ intersects exactly two of $\mathfrak{l}_{ij}, \mathfrak{l}_{ij'}, \mathfrak{l}_{i'j}, \mathfrak{l}_{i'j'}$.
    Conversely, if $\mathfrak{l}\in \mathrm{Rey}(W)$ intersects some of the four lines, then $\nu(\mathfrak{l})$ intersects $L_{ij}$ since $W$ is base point free. 
    Hence the claim holds. 
    Thus we have either $\nu^{*}(\sigma_{L_{ij}}\cap \mathrm{Bit}(\mathcal{D}_{W})) \in |\mathcal{O}_{\mathrm{Rey}(W)}(2)|$ or $\nu^{*}(\sigma_{L_{ij}}\cap \mathrm{Bit}(\mathcal{D}_{W})) \in |\omega_{\mathrm{Rey}(W)}(2)|$ (In fact, one can check that $\nu^{*}(\sigma_{L_{ij}}\cap \mathrm{Bit}(\mathcal{D}_{W})) \in |\omega_{\mathrm{Rey}(W)}(2)|$ using the fact that $\sigma_{P_{i}}\cap \mathrm{Rey}(W) - \sigma_{P_{i}'}\cap \mathrm{Rey}(W) \sim K_{\mathrm{Rey}(W)}$. However, we omit the proof because, even if $\nu^{*}(\sigma_{L_{ij}}\cap \mathrm{Bit}(\mathcal{D}_{W})) \in |\mathcal{O}_{\mathrm{Rey}(W)}(2)|$ holds, it does not affect the discussion). 

    Let $L \in \mathrm{Bit}(\mathcal{D}_{W})$ be a bitangent line which does not pass through any node. 
    Let $\ell \subset f^{-1}(L)$ be the line on $X$ such that $\ell \in M_{1}^{+}$. 
    Then we claim that $\bar{\nu}^{*}(M_{1,\iota(\ell)}^{+})$ is linearly equivalent to $\sigma_{\nu^{-1}(L)}\cap \mathrm{Rey}(W)$. 
    Indeed, if $\mathfrak{l}\in \sigma_{\nu^{-1}(L)}\cap \mathrm{Rey}(W)$, then $\nu (\mathfrak{l})$ contains a point $[Q]\in L$. 
    If $Q$ is a cone over a smooth conic, then $\bar{\nu}(\mathfrak{l})$ intersects $\iota(\ell)$ at the point $f^{-1}([Q])$. 
    If $Q$ is a smooth quadric, then $\mathfrak{l}$ and $\nu^{-1}(L)$ are in the different rulings of $Q$ since $\mathfrak{l}\cap \nu^{-1}(L) \neq \emptyset$. 
    Recall that the elements of the fiber $f^{-1}([Q])$ are identified with the rulings of $Q$ (Construction \ref{constr: AM modern}). 
    Therefore, the lines $\bar{\nu}(\mathfrak{l})$ and $\ell$ are disjoint, hence $\bar{\nu}(\mathfrak{l}) \cap \iota(\ell) \neq \emptyset$. 
    Conversely, let $\mathfrak{l} \in \bar{\nu}^{-1}(M_{1,\iota(\ell)}^{+})$. 
    Then $\bar{\nu}(\mathfrak{l}) \cap \iota(\ell) \neq \emptyset$. 
    Hence $\nu(\mathfrak{l})$ contains a point $[Q] \in L$. 
    If $Q$ is a cone over a smooth conic, then $\mathfrak{l}$ and $\nu^{-1}(L)$ intersect at the vertex of $Q$. 
    If $Q$ is a smooth quadric, then $\mathfrak{l}$ and $\nu^{-1}(L)$ are in the different rulings of $Q$ since $\bar{\nu}(\mathfrak{l}) \cap \iota(\ell) \neq \emptyset$. 
    Hence $\mathfrak{l}\cap \nu^{-1}(L)\neq \emptyset$, and the claim holds. 
    
    Combining these claims, we have $\bar{\nu}^{*}(M_{1,\ell}^{+})\in |\omega_{\mathrm{Rey}(W)}(1)|$ and $\bar{\nu}^{*}(M_{1,\iota(\ell)}^{+})\in |\mathcal{O}_{\mathrm{Rey}(W)}(1)|$. 
    We now assume that for general $\ell\in M_{1}^{+}$, the locus $M_{1, \ell}^{+}$ is reducible. 
    Then Lemma \ref{lem: Reye reducible div} shows that $\nu^{*}(M_{1, \ell}^{+})$ contains an irreducible component $D$ such that $\dim |D| \le 1$. 
    Since $\omega_{\mathrm{Rey}(W)}(1)\cdot D \le \omega_{\mathrm{Rey}(W)}(1)^{2} = 10$, there are only finitely many possibilities for the linear equivalent class of $D$. 
    Hence there exists a curve $C$ on $\mathrm{Rey}(W)$ such that $C\subset \nu^{-1}(M_{1, \ell}^{+})$ for any general $\ell\in M_{1}^{+}$, which is absurd. 
    Thus, a general $M_{1, \ell}^{+}$ is irreducible. 
    The same argument shows that a general $M_{1, \iota(\ell)}^{+}$ is irreducible, as required. 
\end{proof}
    
\subsection{Higher degree rational curves on AM double solids}\label{subsection: higher deg rational curves on AM}
In this subsection, we prove Theorem \ref{thm: Main thm} and Geometric Manin's Conjecture. 
The following lemma and theorem follow from the results of \cite{Okamura2025Gt}. 
\begin{lem}\label{lem: AM dominant comp}
    Let $X$ be an AM double solid. 
    Then any component of $\overline{M}_{0,0}(X)$ is dominant of the expected dimension.
\end{lem}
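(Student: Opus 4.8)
The plan is to deduce the lemma from Proposition \ref{prop: non-dom vs higher a-inv} together with the bend-and-break machinery of \cite{Okamura2024Gt}, reducing everything to one geometric input about $X$: that $X$ contains no closed subvariety $Z\subsetneq X$ with $a(Z,-K_{X}|_{Z})>a(X,-K_{X})=1$. Granting this, every component $M\subset\mathrm{Mor}(\mathbb{P}^{1},X)$ is dominant --- otherwise, by Proposition \ref{prop: non-dom vs higher a-inv}, the closure $Z$ of its image would be such a subvariety --- and the expected-dimension claim is then immediate from Lemma \ref{lem: free locus} (a general member of a dominant component is free) and Lemma \ref{lem: Gt exp dim}. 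To upgrade this to all of $\overline{M}_{0,0}(X)$ I would induct on the $H$-degree $d$: since $\rho(X)=1$ every $1$-cycle class is proportional to $\ell$, so a stable map with reducible source splits into rational curves of degrees $d_{i}<d$, each lying by induction in a dominant component of expected dimension; a boundary stratum all of whose components are free has dimension at most $2d-e<2d=-K_{X}\cdot d\ell$ (Lemma \ref{lem: Gt exp dim} applied componentwise, $e$ the number of nodes), so it is not a component and smooths to an irreducible stable map, while strata carrying a non-free component are again controlled by Proposition \ref{prop: non-dom vs higher a-inv}.

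It thus suffices to classify $Z\subsetneq X$ with $a(Z,-K_{X}|_{Z})>1$ and find none. Recall $-K_{X}=2H$ with $H=f^{*}\mathcal{O}_{\mathbb{P}^{3}}(1)$ and $f\colon X\to\mathbb{P}^{3}$ finite of degree $2$. If $Z$ is a curve it is rational with $-K_{X}\cdot Z=2H\cdot Z\ge 2$, whence $a(Z,-K_{X}|_{Z})=1/(H\cdot Z)\le 1$, a contradiction. Hence $Z$ is a surface, $H|_{Z}$ is nef (globally generated) and big (as $f|_{Z}$ is generically finite onto its image), and $a(Z,-K_{X}|_{Z})>1$ amounts to $a(\tilde Z,A)>2$ for $A:=\sigma^{*}H|_{Z}$ on a resolution $\sigma\colon\tilde Z\to Z$. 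Now $a(\tilde Z,A)>2$ already forces $\kappa(\tilde Z)=-\infty$, and a Riemann--Roch and Hodge-index estimate for the nef and big divisor $A$ (equivalently, the known classification of high-$a$-invariant polarized surfaces) pins down $A^{2}=1$, $-K_{\tilde Z}\cdot A=3$, and exhibits $\tilde Z$ via $|A|$ as a blow-down of $\mathbb{P}^{2}$ with $A$ the pullback of $\mathcal{O}_{\mathbb{P}^{2}}(1)$; since $H_{Z}^{2}=A^{2}=1$, the finite map $f|_{Z}$ is birational onto a plane $P\subset\mathbb{P}^{3}$, forcing $Z\cong\mathbb{P}^{2}$ mapping isomorphically onto $P$.

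The main obstacle is therefore to exclude such a plane $Z\cong\mathbb{P}^{2}\subset X$. Its existence forces $f^{-1}(P)$ to be reducible, equivalently the branch quartic $\mathcal{D}_{W}\cap P\in|\mathcal{O}_{P}(4)|$ to be a double conic $2C$. I would rule this out using the genericity of $W$: being tangent to a plane along a conic is a positive-codimension condition even within the determinantal family of quartic symmetroids, and for a web $W$ as in Construction \ref{constr: AM modern} a decomposition $\mathcal{D}_{W}\cap P=2C$ would force the symmetric matrix of linear forms cutting out the symmetroid to drop rank along $C$, producing either a line contained in $\mathcal{D}_{W}$ or a point of $S_{1}\cap W$ --- both excluded by construction. (This absence of planes can also be read off from the general analysis of accumulating subvarieties of terminal factorial del Pezzo threefolds in \cite{Okamura2024Gt}.) With no surface $Z$ satisfying $a(Z,-K_{X}|_{Z})>1$, every component of $\mathrm{Mor}(\mathbb{P}^{1},X)$, and by the induction above of $\overline{M}_{0,0}(X)$, is dominant, and dominance together with Lemmas \ref{lem: Gt exp dim} and \ref{lem: free locus} yields the expected dimension.
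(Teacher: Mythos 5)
Your overall skeleton agrees with the paper's: the paper proves the lemma in two lines by noting $X$ is a factorial terminal del Pezzo threefold of degree $2$ (Lemma \ref{lem: AM factorial}), quoting \cite[Theorem 1.1]{Okamura2024Gt} to conclude that $X$ has no subvariety $Y$ with $a(Y,-K_{X}|_{Y})>1$, and then applying Proposition \ref{prop: non-dom vs higher a-inv} (with Lemmas \ref{lem: Gt exp dim} and \ref{lem: free locus} giving the expected dimension). Your reduction to the absence of $a>1$ subvarieties, and your handling of boundary components of $\overline{M}_{0,0}(X)$ by induction and a node count, are consistent with this. The problem is the step where you replace the citation by a direct argument.

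The genuine gap is the exclusion of a surface $Z$ with $H^{2}\cdot Z=1$, equivalently a plane $P\subset W$ with $\mathcal{D}_{W}\cap P=2C$. Your claim that such a decomposition ``forces the symmetric matrix of linear forms to drop rank along $C$'' is not justified and is false as a general implication about symmetroids: the plane section is non-reduced at a corank-one point $[Q]\in C$ exactly when $P$ lies in the tangent hyperplane of $S_{3}$ at $[Q]$, i.e.\ when every quadric of the net $P$ passes through the vertex of $Q$, and this tangency along a conic can occur while the rank stays $3$ along all of $C$ (classically, nodal quartic symmetroids can possess trope planes tangent along conics; only the ten points of $S_{2}\cap W$ have corank $\ge 2$). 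So the dichotomy ``line in $\mathcal{D}_{W}$ or point of $S_{1}\cap W$'' does not follow, and the appeal to ``positive codimension within the determinantal family'' is not a proof, since one must rule the phenomenon out for the specific three-parameter family of planes inside a given general $W$. Note also that your argument never uses factoriality, which is precisely the hypothesis the paper leans on: if $f^{-1}(P)$ split as $Z\cup Z'$, then $Z+Z'\sim H$ with $H^{2}\cdot Z=H^{2}\cdot Z'=1$, whereas factoriality and $\operatorname{Pic}(X)=\mathbb{Z}H$ force $Z\sim aH$ and hence $H^{2}\cdot Z=2a$ even, a contradiction; this (packaged as \cite[Theorem 1.1]{Okamura2024Gt}) is the content you would need. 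Your parenthetical fallback to that theorem would indeed close the gap, but then your proof collapses to the paper's; as an independent argument, the plane-exclusion step does not stand. (Your surface classification via $a(\tilde Z,A)>2\Rightarrow A^{2}=1$ and the curve case are fine in outline, and the boundary induction, while more detailed than the paper's citation, is acceptable.)
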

\begin{proof}
    By Lemma \ref{lem: AM factorial}, $X$ is a factorial terminal del Pezzo threefold of degree $2$.
    Hence by \cite[Theorem 1.1]{Okamura2025Gt}, $X$ does not have subvarieties $Y$ with $a(Y, -K_{X}|_{Y}) > 1$, which proves the claim by Proposition \ref{prop: non-dom vs higher a-inv}. 
\end{proof}

\begin{thm}[Movable Bend-and-Break]\label{thm: AM MBB}
    Let $X$ be an AM double solid. 
    Let $d \ge 2$ be an integer and let $M \subset \overline{M}_{0,0}(X, d)$ be an irreducible component. 
    Then $M$ contains a stable map $g\colon C_{1} \cup C_{2}\rightarrow X$ such that each restriction $g|_{C_{i}}\colon C_{i}\rightarrow X$ is free. 
\end{thm}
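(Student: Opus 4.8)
The plan is to invoke the movable bend-and-break theorem for factorial terminal del Pezzo threefolds proved in \cite{Okamura2024Gt}, after checking that an AM double solid satisfies its hypotheses, and to treat the base degree $d=2$ separately using the geometry of lines from Subsection~\ref{subsection: line vs Rey cong}. By Lemma~\ref{lem: AM factorial} an AM double solid $X$ is a factorial Gorenstein terminal del Pezzo threefold of degree $2$, with $-K_{X}=2H$, so a curve of $H$-degree $d$ has anticanonical degree $2d$. By Lemma~\ref{lem: AM dominant comp} every component of $\overline{M}_{0,0}(X)$ is dominant of the expected dimension, and combined with Lemma~\ref{lem: free locus} this gives that a general member $f\colon\mathbb{P}^{1}\to X$ of the given component $M$ is free, and, more generally, that any rational curve parametrized by a component of $\overline{M}_{0,0}(X)$ and passing through a general point of $X$ is free. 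In particular $X$ is covered by free $H$-lines (the components $M_{1}^{\pm}$ dominate $X$) and by free $H$-conics.

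For $d\ge 3$ I would run the standard two-point bend-and-break, which is the content of the movable bend-and-break theorem of \cite{Okamura2024Gt} in this setting. If a general $f\in M$ is very free, then $f$ passes through two general points $x_{1},x_{2}\in X$, and since $h^{0}(\mathbb{P}^{1},f^{*}T_{X}(-2))=2d-3$ in the very free case, the family of members of $M$ through $x_{1}$ and $x_{2}$ has dimension $2d-4>0$; Mori's bend-and-break then produces a degeneration $g\colon C_{1}\cup C_{2}\to X$ inside $M$ with $x_{i}\in g(C_{i})$, and each $g|_{C_{i}}$ is free by Lemmas~\ref{lem: AM dominant comp} and \ref{lem: free locus}. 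If instead a general member of $M$ is free but not very free, then $M$ lies among the exceptional dominant components; by the classification of $a$-covers underlying \cite{Okamura2024Gt} (cf.\ Proposition~\ref{prop: ev with reducible fib vs a-cov}, together with the absence of face-contracting $a$-covers for $X$) such an $M$ generically parametrizes multiple covers of $H$-lines, and a $d$-sheeted cover of a general $H$-line $\ell$ degenerates within $M$ into a union of a $d_{1}$-sheeted and a $d_{2}$-sheeted cover of $\ell$ with $d_{1}+d_{2}=d$ and $d_{i}\ge 1$, each having restricted tangent bundle $\mathcal{O}(2d_{i})\oplus\mathcal{O}\oplus\mathcal{O}$, hence free. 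That there are no further exceptional components relies on Lemma~\ref{lem: AM dominant comp}, i.e.\ that $X$ has no subvariety $Y$ with $a(Y,-K_{X}|_{Y})>1$.

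For $d=2$ the two-point bend-and-break is unavailable, because a general very free $H$-conic passes through two general points of $X$ in only finitely many ways. Here I would analyse the boundary of $M\subset\overline{M}_{0,0}(X,2)$. Applying bend-and-break to the positive-dimensional family of conics of $M$ through a general point shows that the intersection $\partial M$ of $M$ with the boundary divisor of $\overline{M}_{0,0}(X,2)$ is nonempty, hence of pure codimension one in $M$; every boundary point has image a union $\ell_{1}\cup\ell_{2}$ of two $H$-lines (with $\ell_{1}=\ell_{2}$ allowed, in the component of covers of lines). By Lemma~\ref{lem: union of two lines}, together with the fact that $M_{1}^{+}$ and $M_{1}^{-}$ dominate $X$, the loci of such configurations with prescribed components in $M_{1}^{+}$ or $M_{1}^{-}$ are irreducible and dominate each line factor, so a codimension-one irreducible component of $\partial M$ must coincide with one of them and hence be entirely contained in $M$; its general member is then a union of two free $H$-lines, which is the required $g$.

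The step I expect to be the main obstacle is the interface with \cite{Okamura2024Gt}: confirming that its movable bend-and-break and its classification of dominant components with non-very-free general member are available for del Pezzo threefolds of degree $2$ (not only degree $\ge 3$), and that over an AM double solid this classification leaves only the covers of $H$-lines as the non-very-free dominant components, so that freeness of both halves is genuinely forced. If the cited theorem already covers $d\ge 2$, the above collapses to a citation plus the verification of hypotheses; otherwise the $d=2$ boundary argument, which rests only on Lemmas~\ref{lem: AM dominant comp}, \ref{lem: free locus} and \ref{lem: union of two lines}, supplies the base case and the induction on $d$ goes through as sketched.
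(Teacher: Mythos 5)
For $d\ge 3$ and for components parametrizing multiple covers of lines, your plan coincides with the paper's: the proof there consists precisely of the citation to \cite[Section 5]{Okamura2024Gt}, which does apply to factorial terminal del Pezzo threefolds of degree $2$, so the "interface" worry is resolved affirmatively and your fallback sketch is not needed (note that the fallback as written understates the problem: two-point bend-and-break only produces \emph{some} reducible limit, not a limit with exactly two components both of which are free, which is the actual content of movable bend-and-break). For $d=2$ you take a genuinely different route from the paper. The paper fixes a general point $p$, works inside the $2$-dimensional family $M_{p}$ of conics of $M$ through $p$ and its image $N_{p}=f_{*}(M_{p})$ in $\mathbb{P}^{3}$, and locates a degeneration into two lines whose node maps to $p$, so that both components pass through the general point $p$ and are therefore free by Lemma \ref{lem: free locus}; it does not use Lemma \ref{lem: union of two lines} at this stage (that lemma enters afterwards, in Theorem \ref{thm: AM space of conics}, in exactly the way you use it: the strata $\Delta^{++},\Delta^{+-},\Delta^{--}$ are irreducible of dimension $3$). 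Your variant, which forces an entire stratum $\Delta^{\pm\pm}$ into $M$ by comparing dimensions with a codimension-one component of $M\cap\Delta$, is a legitimate alternative and even yields a $3$-dimensional family of unions of two free lines inside $M$, at the cost of front-loading Lemma \ref{lem: union of two lines}.

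There is, however, a genuine gap at the hinge of your $d=2$ argument, namely the claim that bend-and-break shows $M\cap\Delta\neq\emptyset$. One-point bend-and-break applied to the proper, positive-dimensional family of conics through a general $p$ only asserts that some limit cycle is reducible \emph{or non-reduced}; in the Kontsevich space the non-reduced alternative is a stable map with \emph{irreducible} domain mapping with degree two onto a line, and such a point does not lie in the boundary divisor. So nonemptiness of $\partial M$ does not follow as stated. It can be repaired: the degenerate locus in $M_{p}$ is proper of dimension at least $1$; if one of its components generically parametrizes irreducible double covers, it is contained in $\overline{M}_{0,0}(\ell,2)\cong\mathbb{P}^{2}$ for one of the finitely many (free) lines $\ell$ through $p$, and being a complete curve in that $\mathbb{P}^{2}$ it must meet the boundary locus there, which consists of reducible-domain covers of $\ell$, so $M\cap\Delta\neq\emptyset$ after all. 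The paper avoids this issue by instead analyzing the image family $N_{p}$ and pinning down a line pair in $M_{p}$ whose node lies over $p$. With such a supplement, the remaining steps of your argument (codimension one, irreducibility of the strata, and freeness of the general member of $\Delta^{\pm\pm}$ since $M_{1}^{\pm}$ are dominant components) go through.
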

\begin{proof}
    When $d \ge 3$ or $M$ parametrizes double covers of lines, then the claim follows by \cite[Section 5]{Okamura2025Gt}. 
    Hence we may assume that $M$ generically parametrizes smooth $H$-conics. 

    Let $M_{p} \subset M$ be the sublocus parametrizing $H$-conics through a general point $p\in X$. 
    Then by Lemma \ref{lem: free locus}, any rational curve of $H$-degree $\le 2$ through $p$ is free. 
    Hence $M_{p}$ has the expected dimension $2$. 
    Consider the image $N_{p} := f_{*}(M_{p})$. 
    Note that the general curve parametrized by $M_{p}$ maps birationally to a smooth conic on $W\cong \mathbb{P}^{3}$ by $f$. 
    Then the locus in $N_{p}$ parametrizing unions of two lines has dimension $1$, and so does the locus in $M_{p}$ parametrizing unions of two lines. 
    Since any line through $p$ is free, there are only finitely many lines through $p$, all of which do not contain any nodes of $X$. 
    Then by dimension count, one can obtain a stable map $(g\colon C_{1}\cup C_{2}\rightarrow X) \in M_{p}$ such that $p \in g(C_{1})\cap g(C_{2})$. 
    By the generality assumption on $p$, each restriction $g|_{C_{i}}$ is free. 
\end{proof}

We now describe the spaces of conics on AM double solids. 
\begin{thm}\label{thm: AM space of conics}
    Let $X$ be the AM double solid associated with an excellent web $W$. 
    The space $\overline{M}_{0,0}(X, 2)$ of conics on $X$ consists of four irreducible components $R_{2}^{+}, R_{2}^{-}, N_{2}^{+}, N_{2}^{-}$, where $N_{2}^{+}$, $N_{2}^{-}$ parametrize double covers of lines of $M_{1}^{+}$, $M_{1}^{-}$ respectively, and $R_{2}^{+}$, $R_{2}^{-}$ generically parametrize embedded, very free curves. 
\end{thm}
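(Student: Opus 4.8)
The plan is to analyze $\overline{M}_{0,0}(X,2)$ by pushing forward to $W\cong\mathbb{P}^3$ via $f$ and splitting according to whether a conic maps to a conic or a line. If $C$ is a conic on $X$, then $f_*C$ has degree $2$ in $\mathbb{P}^3$, so either $f(C)$ is a (possibly degenerate) plane conic and $f|_C$ is birational, or $f(C)$ is a line and $f|_C$ is a degree $2$ map; the latter case forces $C$ to be a double cover of a line $\ell\subset X$, and since $\overline{M}_{0,0}(X,1)=M_1^+\sqcup M_1^-$ by Theorem~\ref{thm: AM space of lines}, this produces exactly the two components $N_2^+,N_2^-$, each irreducible since it fibers over the irreducible $M_1^{\pm}$ with fiber the irreducible space of admissible degree $2$ covers. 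By Lemma~\ref{lem: AM MBB} (together with Lemma~\ref{lem: AM dominant comp}) these $N_2^{\pm}$ are indeed genuine components of the expected dimension $7$, and any component not of this type generically parametrizes stable maps birational onto their image.

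Next I would show that there are exactly two components generically parametrizing birational conics, and that these are conjugate under $\iota_*$. The key mechanism is specialization to unions of two lines: by Theorem~\ref{thm: AM MBB}, any component $M$ of birational type contains a stable map $g\colon C_1\cup C_2\to X$ with each $g|_{C_i}$ free, hence each $g|_{C_i}$ lies in $M_1^+$ or $M_1^-$; moreover $g(C_1)\cap g(C_2)\ne\emptyset$ and (by freeness and the general-point argument in Lemma~\ref{lem: AM MBB}) these lines meet at a point that is not a node of $X$. Conversely, Lemma~\ref{lem: union of two lines} shows that each of the three fiber products $(M_1^{\epsilon})'\times_X(M_1^{\epsilon'})'$ is irreducible, so each determines at most one component of $\overline{M}_{0,0}(X,2)$ whose general member smooths the corresponding union of two lines. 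This gives at most three candidate birational components, indexed by the pairs $(+,+)$, $(+,-)$, $(-,-)$.

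The crucial step is then to rule out the mixed pair $(+,-)$ — that is, to show that a general union $\ell^+\cup\ell^-$ with $\ell^+\in M_1^+$, $\ell^-\in M_1^-$, meeting at a non-nodal point, does \emph{not} smooth to an irreducible conic. This is exactly where the algebraic-versus-numerical distinction from Subsection~\ref{subsection: alg equiv vs num equiv} enters: on $\tilde X$ the classes $\tilde\ell^+$ and $\tilde\ell^-$ are not algebraically equivalent (Lemma~\ref{lem: AM alg equiv ve num equiv}(4)), so a family of irreducible conics specializing to $\ell^+\cup\ell^-$ would give an algebraic equivalence between $\tilde\ell^+ + \tilde\ell^-$ and $2\tilde\ell^{\pm}$ — compatible numerically but I must check it is obstructed algebraically. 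The cleanest route is to observe that a smoothing of $\ell^+\cup\ell^-$ would have to pass through the image of one of the exceptional $E_i$ or else live in the conic-bundle fibers $\nu_i^{-1}$; tracking the monodromy of the discriminant elliptic curves $\delta_i^+\cup\delta_i^-$ as in the proof of Lemma~\ref{lem: AM alg equiv ve num equiv}, one sees that the only degenerations connecting $\ell^+$-type and $\ell^-$-type curves pass through nodes, contradicting the non-nodal intersection established above. Hence only the pairs $(+,+)$ and $(-,+)$ — wait, $(-,-)$ — survive, yielding two components $R_2^{\pm}$ interchanged by $\iota_*$.

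Finally, to conclude that $R_2^{\pm}$ are honest components of the expected dimension $7$ generically parametrizing embedded very free conics: by Lemma~\ref{lem: AM dominant comp} every component of $\overline{M}_{0,0}(X)$ is dominant of expected dimension, so in particular $R_2^{\pm}$ have dimension $7$; by Lemma~\ref{lem: free locus} a general member is free, and one checks $h^1(\mathbb{P}^1, f^*T_X(-2))=0$ by degenerating to a union of two lines each of which is free and using the standard gluing estimate for the splitting type (each line $\ell$ on $X$ has $f^*T_X$ of balanced-enough type because $-K_X\cdot\ell=2$ and $X$ is terminal factorial), giving very freeness on a dense open set; genericity of embeddedness follows since the general conic maps birationally to a smooth plane conic in $\mathbb{P}^3$ and two distinct points of $X$ over a general point of $W$ are separated by $\iota$. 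The main obstacle is the third paragraph: making precise why a general mixed union $\ell^+\cup\ell^-$ cannot smooth, i.e., converting the algebraic-inequivalence of $\tilde\ell^{\pm}$ into a statement about the local structure of $\overline{M}_{0,0}(X,2)$ at $[\ell^+\cup\ell^-]$; I expect this to require a careful analysis of which one-parameter smoothings are permitted, most likely via the conic-bundle structures $\nu_i$ and the monodromy of their discriminant curves.
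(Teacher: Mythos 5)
Your third paragraph --- the step you yourself flag as the crux --- is aimed at a statement that is false, and this inverts the whole structure of the argument. A general mixed union $\ell^{+}\cup\ell^{-}$ (two free lines meeting at a non-nodal point) \emph{does} smooth to an irreducible conic: such a union is a smooth point of $\overline{M}_{0,0}(X,2)$, it lies in a unique component, that component has the expected dimension and is dominant (Lemma \ref{lem: AM dominant comp}), and the locus $\Delta^{+-}$ of such unions has strictly smaller dimension and is not contained in $N_{2}^{\pm}$ (whose boundary maps have image a single line); so the component containing it generically parametrizes irreducible birational conics. These smoothings are exactly the second component $R_{2}^{-}$ of the paper. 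The algebraic inequivalence of $\tilde\ell^{+}$ and $\tilde\ell^{-}$ from Lemma \ref{lem: AM alg equiv ve num equiv} only shows that the component containing $\Delta^{+-}$ is \emph{different} from those containing $\Delta^{++}$ and $\Delta^{--}$; it cannot obstruct its existence. Moreover, your intended picture (two components labelled by $(+,+)$ and $(-,-)$, no mixed one) is also incompatible with the torsion structure: since $\Ker(B_{1}(\tilde X)_{\mathbb{Z}}\to N_{1}(\tilde X)_{\mathbb{Z}})$ is $2$-torsion, $2\tilde\ell^{+}\sim_{\mathrm{alg}}2\tilde\ell^{-}$, so the $(+,+)$ and $(-,-)$ smoothings carry the \emph{same} algebraic class, and in fact they lie in one and the same component.

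This points to the second gap: the step you actually need, and which your proposal has no mechanism for, is proving $R^{++}=R^{--}$, i.e.\ that the smoothings of $(+,+)$-unions and of $(-,-)$-unions belong to a single component. The paper does this by restricting to a general hyperplane section $S\in|H|$, which is a smooth del Pezzo surface of degree $2$, defining the sign map on its $56$ lines via membership in $M_{1}^{\pm}$, and invoking Lemma \ref{lem: conic bundles on dP2} to produce a conic bundle on $S$ with one singular fiber in $\Delta^{++}$ and another in $\Delta^{--}$; the resulting $\mathbb{P}^{1}$ of conics connects the two loci inside one component of $\overline{M}_{0,0}(X,2)$. Without an argument of this kind your count of birational components cannot be completed, so the proposal as written does not establish the theorem. (Two minor points: the expected dimension of $\overline{M}_{0,0}(X,2)$ is $4$, not $7$ --- $7$ is the dimension of the corresponding component of $\mathrm{Mor}(\mathbb{P}^{1},X,2)$ --- and the bend-and-break statement you cite is Theorem \ref{thm: AM MBB}, not a lemma.)
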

\begin{proof}
    Let $N_{2} \subset \overline{M}_{0,0}(X,2)$ be the locus parametrizing double covers of lines on $X$. 
    By dimension count, each component of $N_{2}$ form a component of $\overline{M}_{0,0}(X,2)$. 
    Since $\overline{M}_{0,0}(X,1)$ consists of two components $M_{1}^{+}, M_{1}^{-}$, we see that $N_{2}$ has also two components $N_{2}^{+}, N_{2}^{-}$ parametrizing double covers of lines of $M_{1}^{+}, M_{1}^{-}$. 

    Let $R_{2}$ be the union of components of $\overline{M}_{0,0}(X,2)$ parametrizing birational maps. 
    By Lemma \ref{lem: AM dominant comp}, any component of $R_{2}$ generically parametrizes free birational maps. 
    Let $M$ be an irreducible component of $R_{2}$. 
    By Theorem \ref{thm: AM MBB}, $M$ contains a union $\ell_{1} \cup \ell_{2}$ of two free lines. 
    Let $\Delta^{++}\subset \overline{M}_{0,0}(X,2)$ be the image of the morphism $(M_{1}^{+})'\times_{X} (M_{1}^{+})'\rightarrow \overline{M}_{0.0}(X,2)$, where $(M_{1}^{+})'\subset \overline{M}_{0,1}(X,1)$ denotes the component corresponding to $M_{1}^{+} \subset \overline{M}_{0,0}(X,1)$, and we define the loci $\Delta^{+-}$ and $\Delta^{--}$ similarly.  
    Then $\ell_{1} \cup \ell_{2}$ is contained in one of $\Delta^{++}, \Delta^{+-}$, or $\Delta^{--}$. 
    By Lemma \ref{lem: union of two lines}, these loci are irreducible. 
    Since $\Delta^{++}$, $\Delta^{+-}$, and $\Delta^{--}$ generically parametrize unions of two free lines, each locus contains smooth points of $\overline{M}_{0,0}(X,2)$. 
    Hence $\Delta^{++}, \Delta^{+-}, \Delta^{--}$ are contained in unique components $R^{++}, R^{+-}, R^{--}$ of $R_{2}$ respectively. 
    For general conics $C^{++}\in R^{++}$, $C^{+-}\in R^{+-}$, $C^{--}\in R^{--}$, let $\tilde{C}^{++}$, $\tilde{C}^{+-}$, $\tilde{C}^{--}$ be the strict transforms of $C^{++}$, $C^{+-}$, $C^{--}$ by the smooth resolution $\phi\colon \tilde{X}\rightarrow X$. 
    Then Lemma \ref{lem: AM alg equiv ve num equiv} shows that $\tilde{C}^{++}$ and $\tilde{C}^{--}$ are algebraically equivalent, but $\tilde{C}^{++}$ and $\tilde{C}^{+-}$ are not. 
    This implies that $R^{+-} \neq R^{++}, R^{--}$. 
    Set $R_{2}^{-} := R^{+-}$. 

    We prove that $R^{++} = R^{--}$. 
    Since $X$ has isolated singularities, the general member $S\in |H|$ is a smooth del Pezzo surface of degree $2$. 
    Let $F(S) \subset \overline{M}_{0,0}(X,1)$ be the set of $56$ lines on $X$ contained in $S$. 
    Now define the map $\mathrm{sign}\colon F(S)\rightarrow \{1, -1\}$ by 
    \[
    \mathrm{sign}(\ell) = 
    \begin{cases}
        1, & \text{if }\ \ell \in M_{1}^{+}, \\
        -1, & \text{if }\ \ell \in M_{1}^{-}.
    \end{cases}
    \]
    This is well-defined since $M_{1}^{+}\cap M_{1}^{-}$ consists of lines through nodes. 
    Moreover, one can see that this map satisfies all the assumptions of Lemma \ref{lem: conic bundles on dP2}. 
    Thus, $S$ admits a conic bundle having singular fibers  $[\ell_{1} \cup \ell_{2}]\in \Delta^{++}$ and $[m_{1}\cup m_{2}]\in \Delta^{--}$. 
    This implies that there is a rational curve $\gamma\colon \mathbb{P}^{1}\rightarrow  R_{2}$ such that $\gamma(0) \in \Delta^{++}\setminus \Delta^{--}$ and $\gamma(1) \in \Delta^{--}\setminus \Delta^{++}$. 
    Then, we must have $R_{2}^{+} := R^{++} = R^{--}$. 
    
    Now the evaluation map $(R_{2}^{+})''\rightarrow X\times X$ (resp.\ $(R_{2}^{-})''\rightarrow X\times X$) is dominant, where $(R_{2}^{+})'', (R_{2}^{-})''\subset \overline{M}_{0,2}(X,2)$ denote the components corresponding to $R_{2}^{+}, R_{2}^{-} \subset \overline{M}_{0,0}(X,2)$. 
    Thus, a general member of $R_{2}^{+}$ (resp.\ $R_{2}^{-}$) is very free, hence also an embedding by \cite[Theorem 3.14]{Kollar1996}, completing the proof. 
\end{proof}

\begin{thm}\label{thm: AM strong MBB}
    Let $X$ be the AM double solid associated with an excellent web $W$. 
    Let $d \ge 2$ and let $M\subset \overline{M}_{0,0}(X, d)$ be a component generically parametrizing birational stable maps. 
    Then $M$ contains unions of free curves $g\colon C_{1} \cup C_{2}\rightarrow X$ and $h\colon D_{1}\cup D_{2}\rightarrow X$ such that $g|_{C_{1}} \in M_{1}^{+}$ and $h|_{D_{1}} \in M_{1}^{-}$.
\end{thm}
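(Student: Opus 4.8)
The plan is to run an induction on $d$, using the conic case (Theorem~\ref{thm: AM space of conics}) as the base and the movable bend-and-break results of \cite{Okamura2024Gt} together with Theorem~\ref{thm: AM MBB} for the inductive step. The point is not merely to break a free curve in $M$ into two free pieces — that is Theorem~\ref{thm: AM MBB} — but to control \emph{which component} the broken-off line lies in, so that one can reach both $M_1^+$ and $M_1^-$. For $d=2$, the desired degeneration is exactly what was produced in the proof of Theorem~\ref{thm: AM space of conics}: there we exhibited, via Lemma~\ref{lem: conic bundles on dP2} applied to the conic bundle structures on a general $S\in|H|$, singular fibers $\ell_1\cup\ell_2\in\Delta^{++}$ and $m_1\cup m_2\in\Delta^{--}$ lying in the same component $R_2^+$, and the component $R_2^-=\Delta^{+-}$ already contains unions $\ell_1\cup\ell_2$ with $g|_{C_1}\in M_1^+$ and also (by symmetry, swapping the marked component) one with $g|_{C_1}\in M_1^-$. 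So the base case is essentially recorded.

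For the inductive step, let $M\subset\overline{M}_{0,0}(X,d)$ with $d\ge 3$ generically parametrize birational stable maps. By Lemma~\ref{lem: AM dominant comp} every component of $\overline{M}_{0,0}(X)$ is dominant of expected dimension, and by \cite[Section 5]{Okamura2024Gt} (movable bend-and-break for terminal factorial degree $2$ del Pezzo threefolds) $M$ contains a union $g_0\colon C\cup D\to X$ of two free curves with $\deg g_0|_C = d_1$, $\deg g_0|_D = d_2$, $d_1+d_2=d$, $d_1,d_2\ge 1$; moreover one may arrange $d_2$ to be as small as one likes, so in particular $d_2=1$ or $d_2=2$. The component $M_1\subset\overline{M}_{0,0}(X,d_1)$ (resp.\ $M_2$) through $g_0|_C$ (resp.\ $g_0|_D$) again generically parametrizes birational free curves when $d_1\ge 2$, and is one of $M_1^\pm$ when $d_1=1$. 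Now apply the inductive hypothesis: the component $M_1$ contains a union $h_1\colon C'\cup D'\to X$ of free curves with $h_1|_{C'}\in M_1^+$, and likewise a union with the broken-off line in $M_1^-$. Attaching the free curve $g_0|_D$ at a general point — here one uses that the evaluation maps on the universal families over these components are dominant, so a general point of $X$ lies on a free curve in each — produces a stable map in $\overline{M}_{0,0}(X,d)$ which still lies in $M$ (the attaching happens in a family, and smoothings stay in the component $M$ by dimension count / smoothness of free chains, cf.\ Lemma~\ref{lem: Gt exp dim}), and whose underlying configuration has a degree-$1$ component in $M_1^+$. The symmetric choice gives one in $M_1^-$.

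There is, however, a gap in that naive argument: after attaching $g_0|_D$ to $h_1$ we get a chain of \emph{three} (or more) components, not the two-component configuration $C_1\cup C_2$ demanded in the statement. The fix is to first partially smooth: $h_1$ is a chain $C'\cup D'$ with $h_1|_{C'}$ a line in $M_1^+$ and $h_1|_{D'}$ free of degree $d_1-1$; one wants instead a \emph{two}-component degeneration of $M$ in which one component is a line of $M_1^+$ and the other is a free curve of degree $d-1$. So the cleaner induction is: \emph{the inductive hypothesis should be phrased as a statement about two-component configurations}, and one produces the degree-$d$ two-component configuration directly by smoothing $g_0|_D$ together with the degree-$(d_1-1)$ piece $h_1|_{D'}$ of $h_1$ into a single free curve of degree $d-1$ attached to the line $h_1|_{C'}$. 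That smoothing is legitimate because a general point of $X$ is connected to the line $h_1|_{C'}$ by such a smoothing — equivalently, the locus of lines of $M_1^+$ through a general point is nonempty (the evaluation map $(M_1^+)'\to X$ is dominant, as $M_1^+$ is a dominant component by Lemma~\ref{lem: AM dominant comp}), and a line of $M_1^+$ together with a general free curve of degree $d-1$ meeting it smooths to a free degree-$d$ curve whose deformations sweep out the component $M$. The main obstacle is precisely this bookkeeping: verifying that the two-component configuration so constructed lands in the prescribed component $M$ rather than in some other component of $\overline{M}_{0,0}(X,d)$ parametrizing birational maps. This is handled by the same mechanism as in the proof of Theorem~\ref{thm: AM space of conics}: since all components are dominant of expected dimension and chains of free curves are unobstructed, a general free curve of degree $d-1$ through a general point, attached to a line of $M_1^+$ through that point, smooths to a free — hence in particular deformation-flexible — curve, and any component of $\overline{M}_{0,0}(X,d)$ generically parametrizing birational free curves is reached by sweeping a general point and a general tangent-type direction; the count $2d+3 = (2(d-1)+3) + 5 - 3$ of dimensions confirms that such chains fill up a full component, so $M$ itself contains this configuration.
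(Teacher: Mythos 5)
There is a genuine gap, and it sits exactly at the point you flag as "the main obstacle". Your inductive step ends by asserting that the reassembled two-component configuration (a line of $M_{1}^{+}$, resp.\ $M_{1}^{-}$, attached to a general free curve of degree $d-1$) lies in the prescribed component $M$ because "such chains fill up a full component" and the dimensions match. This cannot work: the content of Theorem \ref{thm: AM spaces of higher deg curves} is precisely that for every $d$ there are \emph{two} distinct components $R_{d}^{+}$ and $R_{d}^{-}$ of the same expected dimension $2d+3$, both dominant and both generically parametrizing very free birational curves, so no dimension count or "sweeping a general point and direction" can decide which of them a given chain deforms into. Concretely, a chain formed by a line of $M_{1}^{+}$ and a general member of $R_{d-1}^{-}$ lies in $R_{d}^{-}$ and not in $R_{d}^{+}$, even though all numerical data coincide; taken literally, your argument would show that every such chain lies in every birational component, i.e.\ $R_{d}^{+}=R_{d}^{-}$, contradicting the algebraic-equivalence obstruction of Lemma \ref{lem: AM alg equiv ve num equiv}. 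What is needed is a deformation carried out \emph{inside} $M$, starting from a configuration already known to lie in $M$. The paper does this by breaking a curve of $M$ into a chain of $d$ free lines (a point of $M$), using the $d=2$ statement (the conic-bundle construction of Lemma \ref{lem: conic bundles on dP2}, which places unions of two $M_{1}^{+}$-lines and unions of two $M_{1}^{-}$-lines in the same irreducible family inside $R_{2}^{+}$) to deform the first two lines of the chain, transferring that deformation of the subcurve to a deformation of the whole chain staying in $M$ via \cite[Lemma 5.9]{LT2019Compos}, and only then smoothing the complementary subchain. Your proposal quotes the right $d=2$ input but omits this subcurve-deformation/gluing mechanism, and without it the crucial "lands in $M$" step is unjustified.

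A secondary problem: to apply your inductive hypothesis to the component $M_{1}$ through the broken-off piece $g_{0}|_{C}$ of degree $d_{1}\ge 2$, you must know that $M_{1}$ generically parametrizes \emph{birational} maps, but movable bend-and-break only produces free pieces, and multiple covers of free lines are free (they form the components $N_{d_{1}}^{\pm}$); you give no argument excluding this case, nor is the claim that one can force $d_{2}\in\{1,2\}$ in the MBB output substantiated. The paper sidesteps both issues by iterating MBB all the way down to a chain of $d$ free lines, for which degree-one pieces are automatically birational onto lines of $M_{1}^{\pm}$.
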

\begin{proof}
    The case $d = 2$ is proved in Theorem \ref{thm: AM space of conics}. 
    Suppose $d > 2$. 
    Let $M\subset \overline{M}_{0,0}(X,d)$ be a component generically parametrizing birational maps. 
    By Theorem \ref{thm: AM MBB}, $M$ contains a chain $g\colon C_{1}\cup \dots \cup C_{d}\rightarrow X$ of free lines. 
    Assume that we have $g|_{C_{1}},\dots, g|_{C_{d}} \in M_{1}^{+}$. 
    Since $g|_{C_{1}\cup C_{2}}$ deforms into a union $h'\colon D'_{1}\cup D'_{2}\rightarrow X$ of free lines such that $h'|_{D'_{1}}, h'|_{D'_{2}}\in M_{1}^{-}$, we see that $g$ also deforms into $h\colon D'_{1}\cup D'_{2}\cup C_{3}\cup \dots \cup C_{d}\rightarrow X$ such that $h|_{D'_{1}}, h|_{D'_{2}}\in M_{1}^{-}$ by \cite[Lemma 5.9]{LT2019Compos}. 
    Smoothing the subchain $D'_{2}\cup C_{3}\cup \dots \cup C_{d}\rightarrow X$, we obtain a desired map $h\colon D_{1}\cup D_{2}\rightarrow X$ such that $h|_{D_{1}} \in M_{1}^{-}$. 
\end{proof}

\begin{thm}\label{thm: AM spaces of higher deg curves}
    Let $X$ be the AM double solid associated with an excellent web $W$. 
    For each integer $d\ge 2$, the space $\overline{M}_{0,0}(X, d)$ consists of four irreducible components $R_{d}^{+}, R_{d}^{-}, N_{d}^{+}, N_{d}^{-}$, where $N_{d}^{+}$, $N_{d}^{-}$ parametrize $d$-sheeted covers of lines of $M_{1}^{+}$, $M_{1}^{-}$ respectively, and $R_{d}^{+}$, $R_{d}^{-}$ generically parametrize embedded, very free curves. 
\end{thm}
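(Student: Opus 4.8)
The plan is to prove this by induction on $d$, using Theorem \ref{thm: AM space of conics} as the base case $d = 2$. Throughout, I separate $\overline{M}_{0,0}(X,d)$ into the locus $N_d$ of multiple covers of lines and the locus $R_d$ of components generically parametrizing birational stable maps; by Lemma \ref{lem: AM dominant comp} every component is dominant of the expected dimension $2d+3$ (as a $0$-pointed Kontsevich space), so in particular no component is ``hidden'' inside a subvariety. For the $N_d$ part, a $d$-sheeted cover of a line of $M_1^{\pm}$ has a $(2d+3)$-dimensional family (a $5$-dimensional space of lines, minus $3$ for reparametrization in $\overline{M}_{0,0}(\mathbb{P}^1,1)$, but with a $2d$-dimensional space of degree-$d$ self-maps $\mathbb{P}^1\to\mathbb{P}^1$ up to the target automorphisms—a standard dimension count), and since $M_1^{+}$ and $M_1^{-}$ are the two components of $\overline{M}_{0,0}(X,1)$ (Theorem \ref{thm: AM space of lines}), this produces exactly two components $N_d^{+}, N_d^{-}$, each a component of $\overline{M}_{0,0}(X,d)$ by dimension count; the argument is identical to the $d=2$ case.

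The heart is the $R_d$ part. First I would show there are \emph{at least} two birational components, distinguished by algebraic equivalence: by Lemma \ref{lem: AM alg equiv ve num equiv}(4), an irreducible curve $\tilde\ell^{+}$ in the class $[\tilde\ell]$ and its conjugate $\tilde\ell^{-} = \tilde\iota(\tilde\ell^{+})$ are not algebraically equivalent, and since a general member of $R_d^{+}$ (built as a smoothing of a chain of $d$ free lines all lying in $M_1^{+}$, via Theorem \ref{thm: AM strong MBB}) has strict transform algebraically equivalent to $d\tilde\ell^{+}$ while that of $R_d^{-}$ is algebraically equivalent to $d\tilde\ell^{-}$, these lie in distinct components $R_d^{+} \ne R_d^{-}$. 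Then I would show there are \emph{at most} two: let $M \subset R_d$ be any birational component. By Theorem \ref{thm: AM MBB}, $M$ contains a chain $C_1 \cup \dots \cup C_d$ of free lines; by Theorem \ref{thm: AM strong MBB}, $M$ also contains a union $g\colon C_1 \cup C_2 \to X$ of free \emph{conics}-broken-curves with $g|_{C_1} \in M_1^{+}$ and, reusing the same theorem, a similar one with a component in $M_1^{-}$. The key point: once we know $M$ contains a broken curve one of whose free components lies in $M_1^{+}$ (resp.\ $M_1^{-}$), the algebraic equivalence class of the strict transform of a general member of $M$ is forced. Combined with Lemma \ref{lem: AM alg equiv ve num equiv} and Corollary \ref{cor: AM lines through nodes} (algebraic $=$ numerical on $X$), a general member of $M$ has strict transform algebraically equivalent to $d\tilde\ell^{+}$ or to $d\tilde\ell^{-}$; the two cases give $M = R_d^{+}$ or $M = R_d^{-}$, once we check these two possibilities really are single irreducible components. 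That last check is where the induction enters: the component $R_d^{+}$ is identified as the unique component containing the image of $(R_{d-1}^{+})'' \times_X (M_1^{+})'$ under the gluing map $\overline{M}_{0,1}(X,d-1) \times_X \overline{M}_{0,1}(X,1) \to \overline{M}_{0,0}(X,d)$ (and symmetrically with a free conic of $R_2^{+}$ glued to a chain), using that this boundary locus is irreducible—this irreducibility follows from the inductive hypothesis that $R_{d-1}^{+}$ is irreducible and very free (so its two-pointed evaluation is dominant onto $X\times X$), plus Lemma \ref{lem: union of two lines} feeding the base step, and that the glued broken curves are smooth points of $\overline{M}_{0,0}(X,d)$ by Lemma \ref{lem: Gt exp dim} (the components being free).

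Finally, for the very-freeness and embedding claims: a general member of $R_d^{+}$ (resp.\ $R_d^{-}$) smooths a chain whose first two links form a very free conic of $R_2^{+}$ (resp.\ $R_2^{-}$, which exists by Theorem \ref{thm: AM space of conics}) glued to $d-2$ further free lines; gluing a very free curve to free curves and smoothing yields a very free curve of the expected splitting type (a standard argument, e.g.\ via \cite[Lemma 5.9]{LT2019Compos} combined with the splitting-type bookkeeping), and a very free curve of $H$-degree $d \ge 2$ on a threefold is an embedding by \cite[Theorem 3.14]{Kollar1996}. The statement for $\mathrm{Mor}(\mathbb{P}^1,X,d)$ versus $\overline{M}_{0,0}(X,d)$ is the usual $+3$ shift for the $\mathrm{PGL}_2$-worth of parametrizations.

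\textbf{Expected main obstacle.} The delicate step is the ``at most two'' direction: showing that every birational component $M$, once its general member's strict transform is pinned to the algebraic class $d\tilde\ell^{\pm}$, actually coincides with $R_d^{\pm}$ rather than being a second, a priori different, component with the same algebraic class. This requires knowing that the boundary locus $\Delta_d^{+\dots+}$ (iterated gluings of $M_1^{+}$-lines) lies in a \emph{unique} component of $R_d$, which in turn rests on the irreducibility of the relevant fiber products—exactly the role Lemma \ref{lem: union of two lines} played for $d=2$, now propagated by induction through the very-freeness of $R_{d-1}^{+}$. Making the gluing-and-smoothing deformation argument rigorous (that a chain of free curves deforms to an irreducible curve inside a \emph{single} component determined by the chain's combinatorics, and that chains with the ``same color pattern'' land in the same component) is the technical crux; the algebraic-equivalence invariant from Lemma \ref{lem: AM alg equiv ve num equiv} is what rules out the components collapsing to one, and Theorem \ref{thm: AM strong MBB} is what prevents them from proliferating beyond two.
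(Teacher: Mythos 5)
Your overall architecture matches the paper's (induction from the conic case, $N_d^{\pm}$ by dimension count, strong movable bend-and-break plus irreducibility of the boundary loci, and the algebraic-equivalence invariant to keep $R_d^{+}\neq R_d^{-}$), but there is a genuine gap at exactly the step you yourself flag as the crux. You claim the irreducibility of the boundary locus $(M_1^{+})'\times_X (R_{d-1}^{+})'$ follows from the inductive hypothesis that $R_{d-1}^{+}$ is very free, ``so its two-pointed evaluation is dominant onto $X\times X$.'' Dominance of the two-pointed evaluation does not imply that the fibers of the one-pointed evaluation $(R_{d-1}^{+})'\to X$ are irreducible, and it is the irreducibility of these fibers (together with irreducibility of $(M_1^{+})'$) that makes the fiber product irreducible. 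A priori a component of very free curves can have general evaluation fibers with several components --- this is precisely the phenomenon governed by $a$-covers, and $a$-covers do exist on $X$ (the families of lines give them, as used in Corollary \ref{cor: AM GMC}). The paper closes this gap by invoking Proposition \ref{prop: ev with reducible fib vs a-cov} together with the classification of $a$-covers on degree $2$ del Pezzo threefolds \cite[Theorem 1.2]{Okamura2024Gt}: a reducible general fiber of the evaluation map of $(R_{d-1}^{\pm})'$ would produce an $a$-cover, and every $a$-cover factors rationally through a family of $H$-lines, which is impossible for the birational components $R_{d-1}^{\pm}$. Without this (or an equivalent monodromy/Stein-factorization argument) your induction does not close: the loci $\Delta_{1,d-1}^{\ast\ast}$ could be reducible, each piece could sit in a different component, and the count of birational components could exceed two.

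A secondary, fixable problem is your bookkeeping of algebraic classes. Since $\tilde\ell^{+}-\tilde\ell^{-}$ is the $2$-torsion generator of $\Ker(B_1(\tilde X)_{\mathbb Z}\to N_1(\tilde X)_{\mathbb Z})$, one has $d\tilde\ell^{+}\sim_{\mathrm{alg}} d\tilde\ell^{-}$ for $d$ even, so ``algebraically equivalent to $d\tilde\ell^{+}$ or to $d\tilde\ell^{-}$'' does not distinguish the two classes in general; what distinguishes $R_d^{+}$ from $R_d^{-}$ is the parity of the number of $M_1^{-}$-pieces in a degeneration, tracked inductively as in the paper. Relatedly, your assertion that containing a broken curve with one piece in $M_1^{+}$ ``forces'' the algebraic class of the general member of $M$ cannot be right as stated: by Theorem \ref{thm: AM strong MBB} the same component contains broken curves with a piece in $M_1^{+}$ and with a piece in $M_1^{-}$; the class is determined by both pieces of the broken curve (i.e., by which $\Delta_{1,d-1}^{\ast\ast}$ the point lies in), not by one of them. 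There are also minor dimension slips (the expected dimension in $\overline M_{0,0}(X,d)$ is $2d$, not $2d+3$, and the space of degree-$d$ covers of a fixed line modulo source automorphisms has dimension $2d-2$), but these do not affect the structure of the argument.
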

\begin{proof}
    The case $d = 2$ is proved in Theorem \ref{thm: AM space of conics}. 
    Suppose $d > 2$. 

    By the dimension count, we see that the locus $N_{d}^{+}$ (resp.\ $N_{d}^{-}$) parametrizing $d$-sheeted covers of lines of $M_{1}^{+}$ (resp.\ $M_{1}^{-}$) forms a component of $\overline{M}_{0,0}(X,d)$, and any other component generically parametrizes birational maps. 

    Let $M\subset \overline{M}_{0,0}(X,d)$ be a component generically parametrizing birational maps. 
    By Theorem \ref{thm: AM strong MBB}, $M$ contains stable maps $g\colon C_{1}\cup C_{2}\rightarrow X$ and $h\colon D_{1}\cup D_{2}\rightarrow X$ such that $g|_{C_{1}} \in M_{1}^{+}$ and $h|_{D_{1}}\in M_{1}^{-}$. 
    Now, we consider the loci 
    \begin{align*}
        \Delta_{1,d-1}^{++} &\cong (M_{1}^{+})'\times_{X} (R_{d-1}^{+})', \\
        \Delta_{1,d-1}^{+-} &\cong (M_{1}^{+})'\times_{X} (R_{d-1}^{-})', \\
        \Delta_{1,d-1}^{-+} &\cong (M_{1}^{-})'\times_{X} (R_{d-1}^{+})', \\
        \Delta_{1,d-1}^{--} &\cong (M_{1}^{-})'\times_{X} (R_{d-1}^{-})'
    \end{align*}
      in $\overline{M}_{0,0}(X,d)$, where $(R_{d-1}^{+})' \subset \overline{M}_{0,1}(X, d-1)$ denotes the component corresponding to $R_{d-1}^{+} \subset \overline{M}_{0,0}(X, d-1)$ and so on. 
      By the classification of $a$-covers \cite[Theorem 1.2]{Okamura2025Gt} and Proposition \ref{prop: ev with reducible fib vs a-cov}, we see that the evaluation maps $(R_{d-1}^{+})'\rightarrow X$ and $(R_{d-1}^{-})'\rightarrow X$ have irreducible fibers. 
      Hence the loci $\Delta_{1,d-1}^{++}, \Delta_{1,d-1}^{+-}, \Delta_{1,d-1}^{-+}, \Delta_{1,d-1}^{--}$ are irreducible. 
      Thus each locus is contained in a unique component respectively. 
      However, Theorem \ref{thm: AM strong MBB} implies that $\Delta_{1,d-1}^{++}, \Delta_{1,d-1}^{--}$ are contained in the same component, say $R_{d}^{+}$, and $\Delta_{1,d-1}^{+-}, \Delta_{1,d-1}^{-+}$ are contained in the same component, say $R_{d}^{-}$. 
      Since strict transforms of general curves of $R_{d}^{+}$ and $R_{d}^{-}$ by the smooth resolution $\phi\colon \tilde{X}\rightarrow X$ are not algebraically equivalent, we see that $R_{d}^{+} \neq R_{d}^{-}$. 
      By the argument in Theorem \ref{thm: AM space of conics}, general members of $R_{d}^{+}$, $R_{d}^{-}$ are very free and embedded, which completes the proof.   
\end{proof}

\begin{cor}\label{cor: AM GMC}
    Geometric Manin's Conjecture holds for AM double solids associated with excellent webs.
\end{cor}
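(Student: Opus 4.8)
The plan is to check each ingredient of Conjecture~\ref{conj: GMC} for $X$ directly. First I would compute the predicted count. Since $\tilde{X}$ is a smooth projective resolution of $X$, Remark~\ref{rem: nr Br} and Theorem~\ref{thm: AM irrational} give $\mathrm{Br}_{\mathrm{nr}}(k(X)/k)\cong \mathrm{Br}(\tilde{X})\cong \mathbb{Z}/2\mathbb{Z}$, so we must exhibit exactly two Manin components in each sufficiently positive degree. As $\mathrm{Pic}(X)\cong \mathbb{Z}H$ and $-K_{X}=2H$, we have $\rho(X)=1$ and $\mathrm{Nef}_{1}(X)_{\mathbb{Z}}=\mathbb{Z}_{\ge 0}\,\ell$ with $\ell$ the class of an $H$-line; hence it suffices to fix $\alpha=2\ell$ (or even $\alpha=\ell$) and prove that $\mathrm{Mor}(\mathbb{P}^{1},X,d)$ has exactly two Manin components for all $d\ge 2$ (resp.\ $d\ge 1$). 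Note that by Theorem~\ref{thm: Main thm} the spaces $\mathrm{Mor}(\mathbb{P}^{1},X,d)$ and $\overline{M}_{0,0}(X,d)$ have the same irreducible components, and that ``Manin'' is a property of components of $\mathrm{Mor}(\mathbb{P}^{1},X)$.

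Next I would invoke the enumeration: by Theorem~\ref{thm: Main thm} (see Theorems~\ref{thm: AM space of lines} and~\ref{thm: AM spaces of higher deg curves}), for $d\ge 2$ the space $\mathrm{Mor}(\mathbb{P}^{1},X,d)$ has exactly the four components $R_{d}^{+},R_{d}^{-},N_{d}^{+},N_{d}^{-}$, and for $d=1$ exactly $M_{1}^{+},M_{1}^{-}$; it remains to decide which are Manin. The components $R_{d}^{\pm}$, whose general member is an embedded very free curve with two-pointed evaluation dominating $X\times X$ (Theorem~\ref{thm: AM spaces of higher deg curves}), are Manin: a breaking morphism $f\colon Y\to X$ dominating such a component would be either non-dominant (impossible, as $R_{d}^{\pm}$ dominates $X$) or an $a$-cover of degree $\ge 2$, and the general theory of accumulating components for terminal factorial del Pezzo threefolds (\cite{LT2019Compos}, \cite{Okamura2024Gt}, via Propositions~\ref{prop: non-dom vs higher a-inv} and~\ref{prop: ev with reducible fib vs a-cov}) rules the latter out for families of very free curves through two general points; the same applies to $M_{1}^{\pm}$, the minimal $-K_{X}$-degree dominant families of lines. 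The components $N_{d}^{\pm}$, on the other hand, are accumulating: they generically parametrize $d$-sheeted covers of lines of $M_{1}^{\pm}$, and such covers factor through the (normalized) universal family of lines $e^{\pm}\colon\mathcal{L}^{\pm}\to X$, where $\mathcal{L}^{\pm}$ is a $\mathbb{P}^{1}$-bundle over the Reye Enriques surface $\mathrm{Rey}(W)\cong M_{1}^{\pm}$ --- indeed the $d$-fold covers of the $\mathbb{P}^{1}$-fibres of $\mathcal{L}^{\pm}\to\mathrm{Rey}(W)$ form a component of $\mathrm{Mor}(\mathbb{P}^{1},\mathcal{L}^{\pm})$ dominating $N_{d}^{\pm}$ under $e^{\pm}_{*}$. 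To conclude it suffices to check that $e^{\pm}$ is a breaking morphism: it is generically finite of degree $\ge 2$ (more than one $M_{1}^{\pm}$-line meets a general point of $X$); it satisfies $a(\mathcal{L}^{\pm},-(e^{\pm})^{*}K_{X})=1$, since the line-fibres sweep out $\mathcal{L}^{\pm}$ and $a(\mathbb{P}^{1},\mathcal{O}(-K_{X})|_{\ell})=a(\mathbb{P}^{1},\mathcal{O}(2))=1$, hence it is an $a$-cover; and $\kappa\bigl(K_{\mathcal{L}^{\pm}}-(e^{\pm})^{*}K_{X}\bigr)>0$, because this class is the pullback under $\mathcal{L}^{\pm}\to\mathrm{Rey}(W)$ of a big divisor on the Enriques surface --- so $e^{\pm}$ is a breaking morphism of type (2) in Definition~\ref{dfn: Manin comp}. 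Combined with the enumeration this gives exactly two Manin components ($R_{d}^{\pm}$, resp.\ $M_{1}^{\pm}$ when $d=1$) in each degree, which equals $|\mathrm{Br}_{\mathrm{nr}}(k(X)/k)|=2$.

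The step I expect to be the main obstacle is the last one: verifying rigorously that the ``covers of lines'' components $N_{d}^{\pm}$ are not Manin, i.e., that $e^{\pm}\colon\mathcal{L}^{\pm}\to X$ is genuinely a breaking morphism. This reduces to (i) $\deg e^{\pm}\ge 2$; (ii) the $a$-invariant computation $a(\mathcal{L}^{\pm},-(e^{\pm})^{*}K_{X})=1$; and (iii) the Iitaka-dimension inequality $\kappa(K_{\mathcal{L}^{\pm}}-(e^{\pm})^{*}K_{X})>0$ on the $\mathbb{P}^{1}$-bundle over $\mathrm{Rey}(W)$. Of these, (iii) is the one that requires real work, and the one for which it is cleanest simply to quote the classification of $a$-covers and non-Manin components for terminal factorial del Pezzo threefolds of degree $2$ from \cite{Okamura2024Gt}; everything else --- the Brauer computation, the cone identifications, the enumeration of components, and the Manin-ness of the $R_{d}^{\pm}$ --- is immediate from results already established above.
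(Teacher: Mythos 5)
Your overall route is the paper's route: compute $|\mathrm{Br}_{\mathrm{nr}}(k(X)/k)|=2$, enumerate the components via Theorem~\ref{thm: Main thm}, show the cover-of-lines components are accumulating via the universal family of lines, and get Manin-ness of $R_{d}^{\pm}$ from the classification of breaking morphisms in \cite{Okamura2024Gt}. But there is one genuine error: you assert that $M_{1}^{\pm}$ are Manin (``the same applies to $M_{1}^{\pm}$'') and that one could even take $\alpha=\ell$. This contradicts your own mechanism for $N_{d}^{\pm}$: the breaking morphism $e^{\pm}\colon\mathcal{L}^{\pm}\to X$ you construct also dominates $M_{1}^{\pm}$ (take $d=1$; the fibres of $\mathcal{L}^{\pm}\to\mathrm{Rey}(W)$ themselves form a component of $\mathrm{Mor}(\mathbb{P}^{1},\mathcal{L}^{\pm})$ whose pushforward is dense in $M_{1}^{\pm}$), so by Definition~\ref{dfn: Manin comp} the line components are accumulating --- which is exactly what the paper's proof states. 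The corollary survives because Conjecture~\ref{conj: GMC} only concerns sufficiently positive classes, so $\alpha=2\ell$ suffices and the count of Manin components for $d\ge 2$ is still $2=|\mathrm{Br}_{\mathrm{nr}}(k(X)/k)|$; but the parenthetical $\alpha=\ell$ and the claimed Manin-ness of $M_{1}^{\pm}$ must be dropped.

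A second, smaller issue is your justification that $R_{d}^{\pm}$ are Manin: ``very free curves through two general points'' does not by itself rule out domination by an $a$-cover --- face-contracting $a$-covers can dominate components of very free curves, and that possibility is precisely the delicate point for a variety with nontrivial Brauer group. The argument that actually works, and the one the paper uses, is the classification from \cite{Okamura2024Gt}: every breaking morphism for $X$ is an $a$-cover of Iitaka dimension $2$ factoring rationally through a family of $H$-lines, so the only components it can dominate are those parametrizing (multiple covers of) lines, namely $M_{1}^{\pm}$ and $N_{d}^{\pm}$; hence $R_{d}^{\pm}$ are Manin. You do end by saying it is cleanest to quote this classification, which is fine and matches the paper --- just be aware that this citation, not the two-point free-curve heuristic, is carrying the weight. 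Finally, a cosmetic point: $\bar{\nu}\colon\mathrm{Rey}(W)\to M_{1}^{+}$ is not an isomorphism ($M_{1}^{\pm}$ are non-normal, Remark~\ref{rem: spaces of lines on AM non-normal}), so phrase the universal family $\mathcal{L}^{\pm}$ as living over $\mathrm{Rey}(W)$ (or the normalization of $M_{1}^{\pm}$) rather than writing $\mathrm{Rey}(W)\cong M_{1}^{\pm}$; this does not affect the argument.
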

\begin{proof}
    The argument in \cite[Theorem 1.4]{Okamura2025Gt} is valid and one can prove that only $R_{d}^{+}$ and $R_{d}^{-}$ for $d\ge 2$ are Manin components. 
    Indeed, by \cite[Theorem 1.1 and Theorem 1.2]{Okamura2025Gt}, any breaking morphism is an $a$-cover of Iitaka dimension $2$, which factors rationally through a family of $H$-lines. 
    Hence $M_{1}^{+}$, $M_{1}^{-}$ and $N_{d}^{+}$, $N_{d}^{-}$ for $d\ge 2$ are accumulating components. 
    Thus, Geometric Manin's Conjecture holds for $X$ since $|\mathrm{Br}_{\mathrm{nr}}(k(X)/k)| = 2$. 
\end{proof}

\bibliography{math}
\end{document}